\theoremstyle{remark}
\newtheorem{example}{\textbf{Example}}[section]
\numberwithin{equation}{section}
\def\tr{\textcolor{black}}
\def\tb{\textcolor{black}}
\newcommand\figcaption{\def\@captype{figure}\caption}
\newcommand\tabcaption{\def\@captype{table}\caption}
\def\bq{\begin{equation}}
\def\eq{\end{equation}}
\def\bqs{\begin{equation*}}
\def\eqs{\end{equation*}}
\def\bsqs{\begin{subequations}}
\def\esqs{\end{subequations}}
\def\ba{\begin{aligned}}
\def\ea{\end{aligned}}
\def\br{\begin{eqnarray}}
\def\er{\end{eqnarray}}
\def\brr{\bq\begin{array}{rlll}}
\def\err{\end{array}\eq}
\def\text#1{\hbox{#1}}
\newtheorem{thm}{Theorem}[section]
\newtheorem{rem}{Remark}[section]
\newcommand{\bsub}{\begin{subequations}}
\newcommand{\esub}{\end{subequations}$\!$}
\title[mixed DG without interior penalty]{Unconditionally energy stable DG schemes for the Swift--Hohenberg equation}
\author[H.~Liu, P. ~Yin]{Hailiang Liu$^\dagger$ and Peimeng Yin$^\dagger$}
\email{hliu@iastate.edu; pemyin@iastate.edu}
\address{$^\ddagger$ Iowa State University, Mathematics Department, Ames, IA 50011} % \email{hliu@iastate.edu}
\keywords{Swift-Hohenberg equation, energy stability, DG method, implicit-explicit time stepping}
\subjclass{65N12, 65N30,  35K35}
\begin{document}

\begin{abstract}
The Swift--Hohenberg equation as a central nonlinear model in modern physics has a gradient flow structure.  Here we introduce fully discrete discontinuous Galerkin (DG) schemes for a class of fourth order gradient flow problems, including the  nonlinear Swift--Hohenberg equation, to produce
free-energy-decaying discrete solutions, irrespective of the time step and the mesh size.
%for the nonlinear Swift--Hohenberg equation, and the method is well  applicable to related fourth order gradient flow problems.
We exploit and extend the mixed DG method  introduced in [H. Liu and P. Yin, J. Sci. Comput., 77: 467--501, 2018] for the spatial discretization, and
the ``Invariant Energy Quadratization" method for the time discretization.
%Based on the proposed idea in [H. Liu and P. Yin, J. Sci. Comput., 77: 467--501, 2018], we exploit and extend the mixed DG method without interior penalty for the spatial discretization. We then develop both first and second order time marching schemes using the ``Invariant Energy Quadratization" method for the nonlinear potential, coupled with a projection for the auxiliary variable.
The resulting IEQ-DG algorithms are linear, thus they can be efficiently solved without resorting to any iteration method. We actually prove that these schemes are unconditionally energy stable.  We present several numerical examples that support our theoretical results and illustrate the efficiency, accuracy and energy stability of our new algorithm. %A number of test cases are provided to assess the performance of the schemes in terms of accuracy and  energy stability.
The numerical results on two dimensional pattern formation problems indicate that the method is able to deliver comparable patterns of high accuracy.
\end{abstract}

\maketitle

\bigskip

% \tableofcontents

%%%%%%%%%%%%%%%%%%%%%%%%%%%%%%%%%%%%%%%%%%%%%%%%%%%%%%%%%%%%

\section{Introduction}

Motivated by fluid mechanics, reaction-diffusion chemistry, and biological systems, pattern forming nonequilibrium systems continue to attract significant research interest (see e.g. \cite{Ho06, CG09}). They form a broad class of dissipative nonlinear partial differential equations (PDEs) that describe important processes in nature. These PDEs,
%Many partial differential equations (PDEs) which arise in physics or engineering involve the computation of fourth order diffusion and nonlinear reaction,
such as the Swift--Hohenberg (SH) equation \cite{SH77} and  extended Fisher--Kolmogorov equations \cite{DS88, PT95},  generally cannot be solved analytically. Therefore, computer simulations play an essential role in understanding of the non-equalibrium processing and how it leads to pattern formation.

We consider the following model equation
\begin{align}\label{fourthPDE}
u_t  = -\Delta^2 u -a \Delta u -\Psi'(u), \; x\in \Omega \subset \mathbb{R}^d, \; t>0,
\end{align}
where $u(x, t)$  is a scalar time-dependent unknown defined in $\Omega$, a spatial domain of $d$ dimension, and $\Psi$ is a given nonlinear function. Here the model parameter $a$ is a constant.
This falls into the large class of relaxation models forming stable patterns studied in \cite{FK99}.  Throughout this work we assume that
\begin{align}\label{pa}
\Phi(w):=\Psi(w) -\frac{a^2}{8}w^2  \; \text{is bounded from below},
\end{align}
and {the domain boundary $ \partial \Omega$ has a  unit outward normal $\nu$}.
We consider the initial/boundary value problem for (\ref{fourthPDE}) with initial data
$
u(x, 0)=u_0(x),
$
%subject to appropriate boundary conditions on the domain boundary $\partial \Omega$.
subject to either periodic boundary conditions, or homogenous boundary conditions such as
\begin{equation}\label{nonperiodic}
(i) \ u=\partial_\nu u=0;  \quad (ii) \ u=\Delta u=0; \quad (iii) \ \partial_\nu u=\partial_\nu \Delta u=0, \quad x\in \partial \Omega,\; t>0.
\end{equation}
Thus equation (\ref{fourthPDE})  may be written as
$$
u_t = -\frac{\delta \mathcal{E}}{\delta u},
$$
where $\frac{\delta \mathcal{E}}{\delta u}$ is the $L^2$ variational derivative, % with respect to variations $\delta u$ that verify $\delta u=\nabla (\delta u)\cdot \nu=0$ on $\partial \Omega$,
and  $\mathcal{E}$ is the free energy functional (or Lyapunov functional)
$$
\mathcal{E}(u) = \int_\Omega \frac{1}{2}\left( \Delta u+\frac{a}{2}u\right)^2 + \Phi(u) dx.
$$
One can show that at least for classical solutions,
%with which one can verify the following energy dissipation law
\bq\label{engdisO}
\frac{d}{dt}\mathcal{E}(u) =  -\int_{\Omega}|u_t|^2dx \leq 0.
\eq
With assumption (\ref{pa}), the free energy $\mathcal{E}$ is bounded from below, hence convergence to steady states is expected as $t\to +\infty$. %see \cite{}.
The expression (\ref{engdisO}) as a fundamental property of (\ref{fourthPDE}) is naturally desired for high order numerical approximations.  The objective of this paper is to develop high order discontinuous Galerkin (DG)  schemes %as an accurate and efficient approach
 which inherit this property for arbitrary meshes and time step sizes. %or solving (\ref{fourthPDE}).  %such problems.
We note that assumption (\ref{pa}) will be essentially used in our time discretization. % by the method of {\sl Invariant Energy Quadratization} (IEQ) (see Section \ref{sec3}).

This study is motivated by the Swift--Hohenberg equation in the theory of pattern formation,
\begin{align}\label{SH}
u_t= \epsilon u -(\Delta+1)^2 u +gu^2 -u^3,
\end{align}
where  $\epsilon $ and $g$ are  physical parameters. Such model was derived by J. Swift and P. C. Hohenberg \cite{SH77} to describe Rayleigh-B\'{e}nard convection \cite{EMS10, WDLCD12}. Related applications can be found in complex pattern formation \cite{KS12}, complex fluids and biological tissues \cite{HA05}.
%and it has become one of the paradigms of nonlinear dynamical system leading to complex pattern formation \cite{KS12}. Apart from fluid convection, this model has also been employed to describe complex fluids and biological tissues \cite{HA05}.
The Swift--Hohenberg equation is also known to have many qualitatively different equilibrium solutions such as two-dimensional
 quasipatterns \cite{BIS17}, and the pattern selection can depend on parameters $\epsilon, g$ and the size of the domain; see e.g. \cite{BPT01, PR04}.

%The SH equation requires very large time simulation to reach steady state, so developing a large time step accurate algorithm becomes necessary to improve the computational efficiency.  In this paper, we exploit the mixed discontinuous Galerkin method without interior penalty introduced in \cite{LY18} for spatial discretization, with particular focus on time stepping so that the resulting algorithm is unconditionally energy stable.

The Swift--Hohenberg equation is a gradient flow and requires very long time simulations to reach steady states.
From the numerical perspective, an ideal scheme to solve a gradient flow would  (i) preserve the energy dissipation, (ii) be more accurate,  (iii) be efficient, and, (iv) perhaps above all, be simple to implement.  Among these the first aspect is particularly important, and is crucial to eliminating numerical results that are not physical (see e.g.  \cite{CPWV97, CP02} ). For the Swift--Hohenberg equation, an explicit time discretization is known to require a time step extremely small to preserve the energy dissipation (see e.g. \cite{XGV91}). Several numerical methods have been developed to alleviate the time step restriction while still keeping the energy dissipation, related contributions include the fully implicit operator splitting finite difference method  \cite{CPWV97, CP02}, %semi-implicit methods \cite{CW08, EW13},
the semi-analytical Fourier spectral method \cite{Le17}, %the energy stable methods \cite{WWL09, HWWL09},
the unconditionally energy stable method \cite{GN12} derived from an integration quadrature formula, the large time-stepping method \cite{ZM16} based on the use of an extra artificial stabilized term, and the energy stable generalized-$\alpha$ method \cite{SEVDPC17}. However, these methods generally require the use of an iteration in solving the fully discrete nonlinear systems.  We report here on a new method which seems to be promising.
% or proven in all four  aspects mentioned above.
{Our numerical results will be on one and two-dimensional cases. The relevant application is indeed mostly in two dimensional space, although some three dimensional versions of the model also describe interesting patterns, see e.g. \cite{TARGK13}.
}

For the spatial discretization,  we exploit and further extend the mixed discontinuous Galerkin method introduced in \cite{LY18}. The method involves three ingredients: (a) rewriting the scalar equation into a symmetric system called mixed formulation; (b) applying the DG discretization to the mixed formulation using only central fluxes on interior cell interfaces; and (c) weakly enforcement of boundary conditions of types as listed in (\ref{nonperiodic}) through both $u$ and the auxiliary variable $q=-\left(\Delta+\frac{a}{2}\right)u$.  For periodic boundary conditions and quadratic $\Psi$,  both $L^2$ stability and optimal $L^2$ error estimates  of the resulting semi-discrete DG method have been established in \cite{LY18}  for  both one dimensional and two dimensional cases using tensor-product polynomials on rectangular meshes.

In this work,  we show that the mixed DG discretization can be refined into a unified form that works for all homogeneous boundary conditions, and further show it satisfies the energy dissipation law (\ref{engdisO}) with a discrete energy of form $\mathcal{E}(u_h, q_h)=\int_{\Omega} (\frac{1}{2}|q_h|^2 +\Phi(u_h)) dx$. {Note that due to the weakly enforcement of boundary condition (i) in (\ref{nonperiodic}), the corresponding discrete energy requires a correction term (vanishing when mesh is refined) so that a discrete energy dissipation law is ensured. }

% for spatial discretization, with a proper time stepping so that the resulting algorithm is easy to code and unconditionally energy stable.
Our mixed DG method has the usual advantages of a DG method (see e.g. \cite{HW07, Ri08, Sh09}) over the continuous Galerkin methods, such as high order accuracy, flexibility in hp-adaptation, capacity to handle domains with complex geometry,  its distinctive feature lies in numerical  flux choices without using any interior penalty.  For more references to earlier results on DG numerical approximations of some fourth order PDEs, we refer to \cite{LY18}.
\iffalse
The idea in \cite{LY18} used for DG spatial discretization was
\begin{itemize}
\item[(a)] to first rewrite the equation into a symmetric mixed formulation
\begin{equation}\label{3-}
\left \{
\begin{array}{rl}
    u_t = &-\mathcal{L} q +f,\\
    q = &\mathcal{L} u,
\end{array}
\right.
\end{equation}
where for (\ref{fourthPDE}) we have $\mathcal{L}=-\left(\Delta+\frac{a}{2}\right)$ and $f= -\Psi'(u)+\frac{a^2}{4}u$; \\
\item[(b)] to apply DG discretization to the mixed formulation using only central fluxes on interior cell interfaces and derive the following semi-discrete DG discretization
\begin{equation}\label{3}
\left \{
\begin{array}{rl}
    (u_{ht}, \phi) = &-A(q_h, \phi)+(f(u_h), \phi),\\
    (q_h, \psi) = &A(u_h, \psi),
\end{array}
\right.
\end{equation}
where both the numerical solution $u_h, q_h$ and the test functions $\phi, \psi$ are from the same discontinuous finite element space $V_h$, with $A(\cdot, \cdot)$ being the bilinear operator induced by the DG approximation of operator $ \mathcal{L}$;
\item[(c)] to weakly enforce non-periodic boundary conditions of types as listed in (\ref{nonperiodic}) through both $u$ and the auxiliary variable $q$.
\end{itemize}
\fi

For the temporal discretization, instead of using the method studied in \cite{LY18} which requires iteratively solving a nonlinear system, we explore the method of \emph{Invariant Energy Quadratization} (IEQ), which was proposed very recently in \cite{Y16, ZWY17}. This method is a generalization of the method of Lagrange multipliers or of auxiliary variables originally proposed in \cite{BGG11, GT13}.
%which has been developed recently
%and successfully applied for a number of gradient flow type models (see, e.g., \cite{Y16, YZW17, ZWY17, YZ18}), so to derive both first and second order %time marching schemes.
With this method, we introduce an auxiliary variable $U=\sqrt{\Phi+B}$, where $\Phi(u)+B > 0$ for some constant $B>0$, so that
$$
\Phi'(u)=H(u) U, \quad  U_t = \frac{1}{2} H(u)u_t,
$$
%and transform the mixed  formulation into an equivalent system,
%\begin{equation*}%\label{2}
%\left \{
%\begin{array}{rl}
%    u_t = &-\mathcal{L} q +H(u) U,\\
 %   q = &\mathcal{L} u,\\
 %   U_t =& \frac{1}{2} H(u)u_t,
%\end{array}
%\right.
%\end{equation*}
where  $H(u):=\Phi'(u)/\sqrt{\Phi(u)+B}$.  Such method when applied to the semi-discrete DG formulation requires only replacing the nonlinear function $\Phi'(u_h^{n+1})$ by $H(u^{n}_h)U^{n+1}$, where $u^{n}_h$ is the approximation of $u_h$ in the previous time step. $U^{n+1}$ is updated from $U^n$ in two steps:  the piecewise $L^2$ projection with $U_h^n=\Pi U^n$, and the update step with
$$
\frac{U^{n+1} - U_h^n}{\Delta t}  = \frac{1}{2} H(u^{n}_h) \frac{u_h^{n+1} - u_h^n}{\Delta t}.
$$
This treatment when coupled with the DG discretization described above leads to
\begin{subequations}\label{FPDGFull1st++}
	\begin{align}
	\left(  \frac{u_h^{n+1} - u_h^n}{\Delta t}, \phi \right)  = & - A(\phi, q_h^{n+1})-\left( H(u_h^n)U^{n+1}, \phi \right), \\
(q_h^{n},\psi) = & A(u_h^{n}, \psi),
	\end{align}
\end{subequations}
for $\forall \phi, \psi$ in the space of piecewise polynomials,  $A(\cdot, \cdot)$ is a bilinear operator corresponding to the operator $\mathcal{L}=-\left(\Delta+\frac{a}{2}\right)$.
To obtain a second order in time discretization, we replace
$q_h^{n+1}$ and $U^{n+1}$ in (\ref{FPDGFull1st++}a) by $(q_h^{n+1}+q_h^n)/2$ and $(U^{n+1}+U_h^n)/2$, respectively, and  replace $H(u_h^n)$ by $H(u^{n, *}_h)$,  with $u^{n, *}_h= \frac{3}{2}u_h^n-\frac{1}{2}u_h^{n-1}$.  We prove that these schemes are unconditionally energy stable.
In addition, the resulting discrete systems are linear with scale comparable to that generated by the same DG discretization to the linear problem. As a result, the methods are simple to implement and computationally efficient to achieve high order of accuracy in space.
%y to code, see (\ref{FPDGFull+}) in section 3.2.
%with scale comparable to that of the linear system generated by the same DG discretization to the linear fourth order problems.  As a result, the methods are simple to implement and computationally efficient to achieve high order of accuracy in space.
%and $U$ is updated by solving the equation $U_t =\frac{1}{2}H(u_h)u_{ht}$. This treatment when coupled with a projection  leads to a linear system.  As a result, the methods are simple to implement and computationally cheap to achieve high order in space and time accuracy.
%so that the resulting scheme is  unconditionally energy stable and still easy to code, see (\ref{FPDGFull+}) in section 3.2.
%The basic idea of the IEQ approach is to transform nonlinear function up to a suitable constant into the quadratic form forcedly via the change of variables, then all the nonlinear terms are treated by the semi-explicit way, which in turn yields a linear system.   As a result, the methods are simple to implement and computationally cheap to achieve high order in space and time accuracy. }

%\tr{In our numerical simulations,  we carefully test the orders of convergence in both spatial and temporal discretization, and numerically validate the energy stability of the schemes, followed by examples to recover some known patterns governed by the two-dimensional Swift-Hohenberg equation. Two particular patterns are rolls and hexagons, arising during the Rayleigh-B\'{e}nard convection as simulated in \cite{PCC14, DA17}.}

This paper is organized as follows: in Section 2, we formulate a unified semi-discrete DG method for  (\ref{fourthPDE}) subject to  different boundary conditions. In Section 3, we present first order and second order fully discrete DG schemes and show their energy dissipation properties. In Section 4, we first present numerical results to demonstrate the high order of accuracy of the proposed schemes, and their energy dissipating property, and we further simulate some two dimensional pattern formation problems, including two particular patterns, rolls and hexagons, arising during the Rayleigh-B\'{e}nard convection as simulated in \cite{PCC14, DA17}.  Finally in Section 5 some concluding remarks are given.

%Finally, we give concluding remarks in section 5.

\section{Symmetrization and spatial discretization }
 In this section we recall the mixed DG spatial discretization introduced in \cite{LY18} and show it also satisfies the energy dissipation law for the nonlinear problem (\ref{fourthPDE}) when subjected to homogeneous boundary conditions.
\subsection{Symmetrization}  The idea in \cite{LY18} is to apply the mixed DG discretization without interior penalty to a symmetrized mixed formulation. For the fourth order PDE  (\ref{fourthPDE}),  we let $\mathcal{L}=-\left(\Delta+\frac{a}{2}\right)$ so that the model admits the following form
$$
u_t = -\mathcal{L}^2u-\Phi'(u).
$$
%where
%$$
%\Phi(u)=\Psi(u)-\frac{a^2}{8}u^2.
%$$
Further set $q=\mathcal{L}u$, then
\begin{equation}\label{mix}
\left \{
\begin{array}{rl}
    u_t = &- \mathcal{L} q-\Phi'(u),\\
    q = & \mathcal{L} u.
\end{array}
\right.
\end{equation}
Let $V_h$ denote the discontinuous Galerkin finite element space, then the DG method for (\ref{mix}) is to find $(u_h(\cdot, t), q_h(\cdot, t))\in V_h\times V_h  $ such that
\begin{subequations}\label{SemiDG}
\begin{align} %\label{SemiDG}
& (u_{ht}, \phi) = -A(q_h, \phi) - (\Phi'(u_h), \phi), \\
& (q_h, \psi)=A(u_h, \psi),
\end{align}
\end{subequations}
for all $\phi, \ \psi \in V_h$.  Here $A(q_h, \phi)$ is the DG discretization of $(\mathcal{L} q, \phi)$ and $A(u_h, \psi)$ is the DG discretization
of  $(\mathcal{L} u, \psi)$. The precise form of $A(\cdot, \cdot)$ will be given in the next subsection depending on the types of boundary conditions.
The initial data for $u_h$ is taken as $u_h(x,0)=\Pi u_0(x)$, here $\Pi$ is the piecewise $L^2$ projection, more precisely $u_h(x,0) \in V_h$ satisfying
\bqs%\label{init1}
\int_{\Omega} (u_0(x)-u_h(x,0))\phi dx=0, \quad \forall \phi\in V_h.
\eqs
We should point out that  the advantages of symmetry in the scheme formulation lie at least in two aspects: (i) unconditional energy stability of the semi-discrete scheme, and (ii) easy computation since the resulting discrete system has a symmetric  coefficient matrix.
\subsection{DG discretization} The mixed semi-discrete DG scheme (\ref{SemiDG}) was presented in \cite{LY18} for one and two dimensional rectangular meshes. Here we extend it to a unified form valid for more general meshes and different boundary conditions, and further study its energy dissipation property.

To extend the results in \cite{LY18} to general meshes we need to recall some conventions.  Let the domain $\Omega$ be a union of  shape regular meshes $\mathcal{T}_h=\{K\}$, with the mesh size $h_K = \text{diam}\{K\}$ and $h=\max_{K} h_K$.  We denote the set of the interior interfaces by $\Gamma^0$, and  the set of all boundary faces by $\Gamma^\partial$.  Then the discontinuous Galerkin finite element space can be formulated as
$$
V_h = \{v\in L^2(\Omega) \ : \ v|_{K} \in P^k(K), \ \forall K \in \mathcal{T}_h \},
$$
where $P^k(K)$ denotes the set of polynomials of degree no more than $k$ on element $K$. If the normal vector on the element interface $e\in \partial K_1 \cap \partial K_2$ is oriented from $K_1$ to $K_2$, then the average $\{\cdot\}$ and the jump $[\cdot]$ operator are defined by
$$
\{v\} = \frac{1}{2}(v|_{\partial K_1}+v|_{\partial K_2}), \quad [v]=v|_{\partial K_2}-v|_{\partial K_1}, % \quad \text{on} \ e,\in \partial K_1 \cap \partial K_2,
$$
for any function $v \in V_h$,  where $v|_{\partial K_i} \ (i=1,2)$ is the trace of $v$ on $e$ evaluated from element $K_i$.

%the average $\{\cdot\}$ and the jump $[\cdot]$ operator  are defined by
%$$
%\{v\} = \frac{1}{2}(v|_{\partial K_1}+v|_{\partial K_2}), \quad [v]=v|_{\partial K_2}-v|_{\partial K_1} \quad \text{on} \ e \in \partial K_1 \cap \partial K_2,
%$$
%on element interfaces for any function $v \in V_h$,  where $v|_{\partial K_i} \ (i=1,2)$ is the trace of $v$ on $e$ evaluated from element $K_i$.

The direct DG discretization of (\ref{mix}), following \cite{LY18}, is of the form
\begin{subequations}\label{SemiDGscell}
\begin{align}
\int_K u_{ht}  \phi dx = &- \int_K \nabla q_h \cdot \nabla \phi dx +  \int_{\partial K} \widehat{\partial_\nu q_h} \phi + (q_h-\widehat{q_h})\partial_\nu \phi ds + \int_K \left( \frac{a}{2} q_h- \Phi'(u_h)\right) \phi dx,\\
\int_K q_h  \psi dx = & \int_K \nabla u_h \cdot \nabla \psi dx -   \int_{\partial K} \widehat{\partial_\nu u_h} \psi + (u_h-\widehat{u_h})\partial_\nu \psi ds - \int_K \frac{a}{2} u_h \psi dx,
\end{align}
\end{subequations}
for  $u_h, \ q_h \in V_h$ with test functions  $\phi, \ \psi \in V_h$.  \tr{Here with a slight abuse of notation,  we use $\nu$ to also stand for the outward normal direction to $\partial K$ for each $K$}.  On cell interfaces $e \in \partial K \bigcap \Gamma^0$,  central numerical fluxes
\begin{equation}\label{fluxI}
\widehat{\partial_\nu q_h} = \{\partial_\nu  q_h\}, \ \widehat{q_h} = \{q_h\}, \ \widehat{\partial_\nu u_h} = \{\partial_\nu  u_h\}, \ \widehat{u_h} = \{u_h\}
\end{equation}
are adopted in \cite{LY18}.  Boundary fluxes on $e \in \partial K \bigcap \Gamma^\partial$ depend on boundary conditions pre-specified.  For periodic boundary conditions, the numerical fluxes  can take the same formula as those in (\ref{fluxI}).
For non-homogeneous boundary conditions
\begin{align}\label{3bd}
(i) \; u=g_1, \partial_\nu u=g_2;  \quad (ii)\; u= g_1, \Delta u=g_3; \quad (iii)\; \partial_\nu u=g_2, \partial_\nu \Delta u=g_4 \quad \text{on} \;  \partial \Omega,\; t>0,
\end{align}
the boundary fluxes introduced in \cite{LY18} are respectively defined by
 \begin{align}\label{fluxBD1}
  \widehat{u_h} = g_1, \;
\widehat{\partial_\nu u_h} =  g_2,
  \widehat{q_h} =  q_h, \;
 \widehat{\partial_\nu q_h} = \frac{\beta_1}{h} (g_1-u_h)+ \partial_\nu q_h;
\end{align}
  \begin{align}\label{fluxBD2}
 \widehat{u_h} = g_1, \;
\widehat{\partial_\nu u_h} =\frac{\beta_0}{h} (g_1-u_h)+ \partial_\nu u_h;
  \widehat{q_h} =  -g_3-\frac{a}{2}g_1, \;
 \widehat{\partial_\nu q_h} = \frac{\beta_0}{h} (-g_3-\frac{a}{2}g_1-q_h) + \partial_\nu q_h;
\end{align}
 \begin{align}\label{fluxBD3}
& \widehat{u_h} = u_h, \;
\widehat{\partial_\nu u_h} =  g_2; \quad
  \widehat{q_h} =  q_h, \;
 \widehat{\partial_\nu q_h} =-g_4-\frac{a}{2}g_2,
 \end{align}
where the flux parameters $\beta_{0}, \ \beta_{1}$ are used to weakly enforce the specified boundary conditions. { Note that
$h$ in $\frac{\beta_0}{h}$ or   $\frac{\beta_1}{h}$ needs to be carefully chosen when using unstructured meshes. In practice, it has been selected as the distance from cell center to the domain boundary. }

Summation of (\ref{SemiDGscell}) over all elements $K\in \mathcal{T}_h$ leads to a unified DG formulation
\begin{subequations}\label{SemiDGN}
\begin{align}
(u_{ht},\phi) +\alpha h^{-1} (u_h, \phi)_{\Gamma^{\partial}}= &- A(\phi, q_h) -\left(\Phi'(u_h),\phi \right),\\
(q_h, \psi) = &  A(u_h,\psi),
\end{align}
\end{subequations}
for periodic and homogeneous boundary conditions, i.e. $g_i=0$. Here
%except for boundary condition (i) in (\ref{3bd}), for which $\alpha=\beta_1$,
the bilinear functional
$$
A(w,v)=A^0(w,v)+A^b(w,v)
$$
with
\bq \label{A0}
A^0(w,v)= \sum_{K\in \mathcal{T}_h} \int_K \left( \nabla w \cdot \nabla v  - \frac{a}{2} w v \right)dx + \sum_{e\in \Gamma^0} \int_e \left( \{\partial_\nu w\}[v]+ [w]\{\partial_\nu v\} \right)ds.
\eq
Both the method parameter $\alpha$ and  $A^b(\cdot, \cdot)$ are given below for each respective type of boundary conditions:
\begin{subequations}\label{bd+}
\begin{align}
\text{for periodic case} \qquad & \alpha=0, A^b(w, v)=\frac{1}{2}  \int_{\Gamma^{\partial} } \left( \{\partial_\nu w\}[v]+ [w]\{\partial_\nu v\} \right)ds, \\
\text{for (i)}   \qquad & \alpha=\beta_1, \; A^b(w, v)= - \int_{\Gamma^{\partial}}   w \partial_\nu v ds, \\
\text{for (ii)} \qquad  & \alpha=0,   \; A^b(w,v)=  \int_{\Gamma^{\partial}} \frac{\beta_0}{h} wv - w\partial_\nu v -\partial_\nu w v  ds,\\
\text{for (iii)} \qquad  &  \alpha=0, \; A^b(w,v)=0.
\end{align}
\end{subequations}
Note that for periodic case in (\ref{bd+}a) the left boundary and the right boundary are considered as same boundaries, for which we use the factor $1/2$
to avoid the recounting.
\begin{rem}
For case (i),  $\alpha \not=0$ and $A^b(\cdot, \cdot)$ is non-symmetric;  our numerical results indicate that,
the optimal order of  accuracy may not be obtained if $\alpha=0$ in such case. For other types of boundary conditions, $\alpha=0$ and
$A^b(\cdot, \cdot)$ is symmetric, hence (\ref{SemiDGN}) reduces to (\ref{SemiDG}).
\end{rem}

\subsection{Energy stability of the DG scheme}
For the semi-discrete DG scheme (\ref{SemiDGN}), we have the following energy dissipation property.
\begin{thm}
The semi-discrete DG scheme (\ref{SemiDGN}) with $\alpha \geq 0$ satisfies a discrete energy dissipation law
\bqs%\label{semieng}
\frac{d}{dt}\mathcal E(u_h,q_h) = -\int_{\Omega}|u_{ht}|^2dx \leq 0,
\eqs
where
\bq\label{oeq}
\mathcal E(u_h,q_h)=\int_{\Omega} \frac{1}{2}|q_h|^2 + \Phi(u_h) dx+\frac{\alpha}{2 h}\int_{\Gamma^{\partial}} u_h^2ds.
\eq
\end{thm}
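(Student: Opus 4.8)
The plan is to prove the identity by a direct energy estimate: differentiate $\mathcal E(u_h,q_h)$ in time and then feed the two relations of the scheme (\ref{SemiDGN}) back in with carefully chosen test functions, so that every contribution except $-\int_\Omega |u_{ht}|^2\,dx$ cancels. First I would differentiate (\ref{oeq}), using the chain rule on the term $\int_\Omega\Phi(u_h)\,dx$ and on the boundary correction $\frac{\alpha}{2h}\int_{\Gamma^\partial}u_h^2\,ds$, to obtain
$$
\frac{d}{dt}\mathcal E(u_h,q_h)=\int_\Omega q_h q_{ht}\,dx+\int_\Omega \Phi'(u_h)u_{ht}\,dx+\frac{\alpha}{h}\int_{\Gamma^\partial}u_h u_{ht}\,ds.
$$

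The next step handles the two $L^2$ terms using the scheme. Since (\ref{SemiDGN}b) holds for every fixed $\psi\in V_h$ and all $t$, and $A(\cdot,\cdot)$ is a time-independent bilinear form acting on the time-differentiable $u_h$, I may differentiate it in $t$ to get $(q_{ht},\psi)=A(u_{ht},\psi)$; choosing $\psi=q_h$ at a fixed time then gives $\int_\Omega q_h q_{ht}\,dx=A(u_{ht},q_h)$. Taking $\phi=u_{ht}$ in (\ref{SemiDGN}a) gives
$$
\int_\Omega |u_{ht}|^2\,dx+\frac{\alpha}{h}\int_{\Gamma^\partial}u_h u_{ht}\,ds=-A(u_{ht},q_h)-\int_\Omega \Phi'(u_h)u_{ht}\,dx.
$$

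Substituting both relations into the expression for $\frac{d}{dt}\mathcal E$ produces the cancellations: the term $A(u_{ht},q_h)$ cancels against its negative, the two copies of $\int_\Omega\Phi'(u_h)u_{ht}\,dx$ cancel, and the two boundary integrals $\frac{\alpha}{h}\int_{\Gamma^\partial}u_h u_{ht}\,ds$ cancel, leaving $\frac{d}{dt}\mathcal E=-\int_\Omega|u_{ht}|^2\,dx\le 0$. Note that the dissipation identity itself holds for any $\alpha$; the hypothesis $\alpha\ge 0$ enters only to make the correction term in (\ref{oeq}) nonnegative, so that $\mathcal E$ is genuinely bounded below together with assumption (\ref{pa}), and the decay is therefore meaningful.

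The point I want to emphasize is that \emph{no symmetry} of $A(\cdot,\cdot)$ is needed, which is essential because under boundary condition (i) the form $A^b$ is non-symmetric (cf.\ the Remark). The scheme is written so that the test function in (\ref{SemiDGN}a) and the variable $u_h$ in (\ref{SemiDGN}b) both sit in the \emph{first} slot of $A$; hence both substitutions produce exactly $A(u_{ht},q_h)$ with the same argument ordering, and these annihilate one another directly. There is, accordingly, no real obstacle here: the only mild technical points are the legitimacy of differentiating (\ref{SemiDGN}b) in time (immediate, as noted) and the chain rule for $\Phi$ (licensed by the smoothness of $\Phi$), neither of which requires further argument.
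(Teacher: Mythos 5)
Your proof is correct and follows essentially the same route as the paper: test (\ref{SemiDGN}a) with $\phi=u_{ht}$, test the time-differentiated (\ref{SemiDGN}b) with $\psi=q_h$, and combine so that the $A(u_{ht},q_h)$ terms cancel. Your added observation that the cancellation needs no symmetry of $A(\cdot,\cdot)$ because both substitutions place $u_{ht}$ in the first slot is accurate and worth noting, since $A^b$ is indeed non-symmetric for boundary condition (i).
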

\begin{proof} Taking $\phi=u_{ht}$ in (\ref{SemiDGN}a), and $\psi=q_h$ {in
$$
(q_{ht}, \psi) =  A(u_{ht},\psi),
$$
which is a resulting equation from differentiation of (\ref{SemiDGN}b) in $t$}, upon summation, we obtain
the desired result.
\end{proof}
\begin{rem} {For case (i) with $\alpha \not=0$, the discrete energy $\mathcal E(u_h,q_h)$ is still consistent with the free energy at the
continuous level. To see this, we can informally argue by assuming that $\|u_h-g\|_{L^\infty(\partial \Omega)} \sim  h^{k+1}$, which is the order of accuracy when using polynomials of degree $k$, then with uniform meshes and note that $g=0$, we have
 $$
\frac{\alpha}{2h} \left|
\int_{\partial \Omega} u_h^2 dx
\right| \sim \frac{1}{2} \alpha |\partial \Omega|h^{2k+1},
$$
which tends to vanish as $h \to 0$.}
%Moreover,  if $u_h \to u$ with the desired order $h^{k+1}$ when using polynomials of degree $k$, we have
%\bq\label{bv}
%\lim_{h\to 0} \frac{\alpha}{2h} \left|
%\int_{\Gamma^{\partial}} u_h^2 dx
%\right|=0,
%\eq
%which says that  when $\alpha \not=0$,
%(\ref{bv}) can be shown in asymptotic manner in the sense that with uniform meshes,
%$$
%\frac{\alpha}{2h} \left|
%\int_{\partial \Omega} u_h^2 dx
%\right| \sim \alpha  d h^{2k+1},
%$$
%hence tending to vanish as $h\to 0$, here $d$ is the spatial dimension.
\end{rem}

\subsection{Non-homogeneous boundary conditions}
For non-homogeneous boundary conditions (i)-(iii) in (\ref{3bd}), the unified DG scheme (\ref{SemiDGN})
becomes
\begin{subequations}\label{SemiDGNg}
\begin{align}
(u_{ht},\phi) +\alpha h^{-1} (u_h, \phi)_{\Gamma^{\partial}} = & - A(\phi, q_h) -\left(\Phi'(u_h),\phi \right) + L_1(t;\phi),\\
(q_h, \psi) = & A(u_h,\psi)+L_2(t;\psi),
\end{align}
\end{subequations}
where  $L_i(t; \cdot), i=1, 2$ are given below for each respective type of boundary conditions:
\begin{subequations}\label{bd}
\begin{align}
\text{for (i)}   \qquad & L_1(t;v) = \int_{\Gamma^{\partial}} \frac{\beta_1}{h} g_1v ds, \\
& L_2(t;v) =  \int_{\Gamma^{\partial}}  \left(g_1 \partial_\nu v - g_2v \right)ds;\\
\text{for (ii)} \qquad  & L_1(t;v) = \int_{\Gamma^{\partial}}
\left((g_3+ag_1/2) \partial_\nu v - \frac{\beta_0}{h} (g_3+ag_1/2)v \right) ds,\\
& L_2(t;v) =  \int_{\Gamma^{\partial}} \left(- g_1 \partial_\nu v -\frac{\beta_0}{h} g_1v\right) ds;\\
\text{for (iii)} \qquad  &  L_1(t;v) = -\int_{\Gamma^{\partial}} (g_4+ag_2/2) vds,\\
& L_2(t;v) = - \int_{\Gamma^{\partial}}  g_2 vds.
\end{align}
\end{subequations}
The dependence of $L_i(t; \cdot)$ on $t$ comes from the fact that $g_i(i=1, \cdots, 4)$ are functions of $x$ and $t$. The choices for parameters $\beta_0$ and $\beta_1$ have been discussed by $L^2$ stability analysis in \cite{LY18}:  the scheme is $L^2$ stable for $\beta_1 \geq 0$ and any $\beta_0\in \mathbb{R}$. Furthermore,  numerical convergence tests in \cite{LY18} for linear problems indicate that the following choices are sufficient for achieving optimal convergence,
\begin{subequations}\label{beta}
\begin{align}
\text{for (i)}   \qquad & \beta_1=\delta  ( k\geq 1);\\
\text{for (ii)} \qquad & |\beta_0|\geq C  \; ( k=1), \; \beta_0 =0 \;  ( k\geq 2),
\end{align}
\end{subequations}
where $k$ is the degree of underlying tensor polynomials, $\delta>0$ in (\ref{beta}a) can be a quite small number. For $P^1$ polynomials in one dimension, the optimal order of convergence is ensured even when $\beta_1=0$, as shown in \cite{LY18}. The choice of $C$ in (\ref{beta}b) is some constant.
For example, $C=3$ was used in one-dimensional tests in \cite[Example 5.5]{LY18}. For (iii),  optimal order of convergence  has been observed  in all related numerical tests in  \cite{LY18} and the present work.

 %$\geq3$ is a valid choice on uniform meshes.
%\tb{
%and $C \geq3$ is a valid choice on uniform meshes.
%For $P^k$ polynomials in two dimension, we numerically verified that these choices are still valid for all the cases except $\beta_1 \geq 0$ for $P^1$ polynomials.
%}

\section{Time discretization}\label{sec3}
An appropriate time discretization should be adopted in order to preserve the energy dissipation law at each time step. One such discretization of (\ref{SemiDG}) studied in \cite{LY18} is  to obtain $(u_h^{n}, q_h^{n}) \in V_h \times V_h$ following the marching scheme,
\begin{subequations}\label{FPDGFullNon}
\begin{align}
   \left(  \frac{u_h^{n+1} - u_h^n}{\Delta t}, \phi \right) = & - A(q_h^{n+1/2},\phi)- \left( \frac{\Phi(u_h^{n+1})-\Phi(u^n)}{u_h^{n+1}-u_h^n},\phi\right), \\
    (q_h^{n}, \psi) = & A(u_h^{n},\psi),
\end{align}
\end{subequations}
for all $\phi, \ \psi \in V_h$, to approximate $u_h(\cdot, t_n)$, $q_h(\cdot, t_n)$, where $t_n=n\Delta t$ with $\Delta t$ being the time step.

This scheme is shown in \cite{LY18} to preserve  the energy dissipation law in the sense that
\bq\label{engdisJSC}
\mathcal{E}_h^{n+1} - \mathcal{E}_h^n = -\frac{\|u_h^{n+1}-u_h^{n}\|^2}{\Delta t} ,
\eq
where
$$
\mathcal{E}_h^n= \int_\Omega \Phi(u_h^n) +\frac{1}{2}|q_h^n|^2dx.
$$
However, implementation of (\ref{FPDGFullNon}) must involve some iteration, see a particular iteration for simulating the Swift--Hohenberg equation in \cite{LY18}.

Here following the idea of the IEQ method (cf.  \cite{Y16}), we propose both first and second order time discretization to the semi-discrete DG scheme (\ref{SemiDGN}) so that the schemes obtained are energy stable independent of time steps, and without resorting to any iteration method.
Because of (\ref{pa}),  we can choose a constant $B$ so that $\Phi(w)+B > 0, \ \forall w \in \mathbb{R}$,  and $U=\sqrt{\Phi(u_h)+B}$ is well-defined. The corresponding energy now reads as
\begin{align}\label{ee}
E(u_h, q_h, U)=\int_{\Omega} \left(\frac{1}{2}|q_h|^2+U^2 \right)dx + \frac{\alpha}{2 h}\int_{\Gamma^{\partial}} u_h^2ds =\mathcal E(u_h, q_h)+B|\Omega|.
\end{align}
With this notation we have $\Phi'(u_h)=H(u_h)U$ with
\begin{align}\label{hw}
 H(w)= \frac{\Phi'(w)}{\sqrt{\Phi(w)+B}}.
\end{align}
Instead of using the formula $U=\sqrt{\Phi(u_h)+B}$, we update $U$ by following its differentiation $U_t =  \frac{1}{2}H u_{ht}$.
More precisely, we consider  the following enlarged system:  find $(u_h(\cdot, t), q_h(\cdot, t)) \in V_h  \times V_h $  such that
\begin{subequations}\label{SemiDG++}
\begin{align}
U_{t} =&  \frac{1}{2}H(u_h) u_{ht},\\
(u_{ht}, \phi) +\alpha h^{-1} (u_h, \phi)_{\Gamma^{\partial}} = &- A(\phi, q_h) -\left(H(u_h)U,\phi \right),\\
(q_h, \psi) = & A(u_h,\psi),
\end{align}
\end{subequations}
for all $\phi, \psi \in V_h $. The initial data for the above scheme is chosen as
$$
u_h(x, 0)=\Pi u_0(x), \quad U(x, 0)=\sqrt{\Phi(u_0(x))+B},
$$
where $\Pi$ denotes the piecewise $L^2$ projection into $V_h$.

By taking $\phi=u_{ht}$ in  (\ref{SemiDG++}b) and $\psi =q_h$ in $(\ref{SemiDG++}c)_t$, which is a resulting equation from differentiation of
 (\ref{SemiDG++}c) in $t$,  upon further summation one can verify that
\bqs
\frac{d}{dt}E(u_h, q_h, U) = -\int_{\Omega}|u_{ht}|^2dx \leq 0,
\eqs
where $E(u_h, q_h, U)$
%=\int_{\Omega} (\frac{1}{2}|q_h|^2+U^2)dx$
is the discrete energy for the enlarged system (\ref{SemiDG++}).

We are now ready to discretize (\ref{SemiDG++}) in time.
\subsection{First order fully discrete DG scheme}
Find  $(u^{n}_h, q_h^{n}) \in V_h  \times V_h $ and $U^n=U^n(x)$ such that
\begin{subequations}\label{FPDGFull1st+}
	\begin{align}
	U^n_h= & \Pi U^n, \\
	\frac{U^{n+1} -  U^n_h}{\Delta t} = & \frac{1}{2}H(u_h^n) \frac{u_h^{n+1} - u_h^n}{\Delta t},\\
	\left(  \frac{u_h^{n+1} - u_h^n}{\Delta t}, \phi \right) + \alpha h^{-1} (u_h^{n+1}, \phi)_{\Gamma^{\partial}} = & - A(\phi, q_h^{n+1})-\left( H(u_h^n)U^{n+1}, \phi \right), \\
(q_h^{n},\psi) = & A(u_h^{n}, \psi),
	\end{align}
\end{subequations}
for $\forall \phi, \psi \in V_h $, with initial data
$$
u_h^0=u_h(x, 0), \quad U^0= U(x, 0).
$$
Note that $U^n$ is not necessary in  $V_h$,  {but} $U_h^n\in V_h$.

Set
$$
E^n := E(u_h^n, q_h^n, U_h^n).
$$
For fully discrete DG scheme (\ref{FPDGFull1st+}), we have the following.
\begin{thm}\label{firstorder+}
	The fully discrete DG scheme (\ref{FPDGFull1st+}) admits a unique solution $(u_h^{n}, q_h^{n})$ for any $\Delta t>0$.
	 Moreover,
	% \begin{subequations}\label{engdis1st+}
	\begin{align}\label{engdis1st+}
	%E(q_h^n, U^{n}_h) \leq & E(q_h^n, U^{n}),\\
	E^{n+1} \leq  E^n - \frac{\| u_h^{n+1} - u_h^n\|^2}{\Delta t}-\frac{1}{2}\|q_h^{n+1} - q_h^n\|^2-\|U^{n+1} - U_h^n\|^2 -\frac{\alpha}{2h}\|u_h^{n+1}-u_h^n\|^2_{L^2(\Gamma^{\partial})},
	\end{align}
	independent of the size of $\Delta t$.
	%\end{subequations}
	%where
	%\bq\label{diseng1st+}
%	E^n_h=\int_{\Omega}\frac{1}{2} |q_h^n|^2+|U_h^n|^2dx.
%	\eq
\end{thm}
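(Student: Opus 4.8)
The plan is to establish the two assertions separately: first the well-posedness of the square linear system, and then the energy inequality, both by testing the scheme against suitably chosen functions.

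\textbf{Well-posedness.} Since (\ref{FPDGFull1st+}) is a finite-dimensional square linear system, it suffices to show that the associated homogeneous problem has only the trivial solution. First I would eliminate the auxiliary unknowns: from (\ref{FPDGFull1st+}b), $U^{n+1}=U_h^n+\tfrac12 H(u_h^n)(u_h^{n+1}-u_h^n)$, while (\ref{FPDGFull1st+}d) at level $n+1$ determines $q_h^{n+1}\in V_h$ as the Riesz representative of $\psi\mapsto A(u_h^{n+1},\psi)$. Substituting both into (\ref{FPDGFull1st+}c) yields a single linear equation for $u_h^{n+1}\in V_h$ whose principal (homogeneous) bilinear form, evaluated on the diagonal $\phi=u_h^{n+1}=:w$ with $q_w$ defined by $(q_w,\psi)=A(w,\psi)$, reads
\[
\frac{1}{\Delta t}\|w\|^2+\alpha h^{-1}\|w\|_{L^2(\Gamma^{\partial})}^2+A(w,q_w)+\tfrac12\|H(u_h^n)w\|^2 .
\]
Choosing $\psi=q_w$ gives the key identity $A(w,q_w)=\|q_w\|^2\ge 0$, so with $\alpha\ge 0$ the whole expression is bounded below by $\tfrac{1}{\Delta t}\|w\|^2$ and vanishes only if $w=0$. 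Hence the homogeneous system forces $u_h^{n+1}=0$, and then $q_h^{n+1}=0$, $U^{n+1}=0$; existence and uniqueness follow for every $\Delta t>0$.

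\textbf{Energy estimate.} The idea is to test (\ref{FPDGFull1st+}c) with $\phi=u_h^{n+1}-u_h^n$ and reduce each term to a telescoping difference of squares plus a nonnegative remainder. The three reductions are: (a) using (\ref{FPDGFull1st+}d) at levels $n$ and $n+1$, $A(u_h^{n+1}-u_h^n,q_h^{n+1})=(q_h^{n+1}-q_h^n,q_h^{n+1})$, which needs no symmetry of $A$ and so covers case (i); (b) using (\ref{FPDGFull1st+}b) in the form $H(u_h^n)(u_h^{n+1}-u_h^n)=2(U^{n+1}-U_h^n)$, so that $(H(u_h^n)U^{n+1},u_h^{n+1}-u_h^n)=2(U^{n+1},U^{n+1}-U_h^n)$; and (c) the boundary term by polarization. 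Applying the elementary identities $(a-b,a)=\tfrac12(\|a\|^2-\|b\|^2+\|a-b\|^2)$ and $2(a,a-b)=\|a\|^2-\|b\|^2+\|a-b\|^2$ to these three terms, everything collects into
\[
\frac{1}{\Delta t}\|u_h^{n+1}-u_h^n\|^2+\tfrac12\|q_h^{n+1}-q_h^n\|^2+\|U^{n+1}-U_h^n\|^2+\frac{\alpha}{2h}\|u_h^{n+1}-u_h^n\|_{L^2(\Gamma^{\partial})}^2=E^n-R^{n+1},
\]
where $R^{n+1}=\tfrac12\|q_h^{n+1}\|^2+\|U^{n+1}\|^2+\tfrac{\alpha}{2h}\|u_h^{n+1}\|_{L^2(\Gamma^{\partial})}^2$.

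\textbf{Main obstacle.} The one subtle point, and the reason (\ref{engdis1st+}) is an inequality rather than an exact identity, is that $E^{n+1}$ is defined through the projected variable $U_h^{n+1}=\Pi U^{n+1}$, whereas the computation above produces the unprojected $\|U^{n+1}\|^2$ inside $R^{n+1}$. I would close the argument by invoking $L^2$-stability of the piecewise projection, $\|U_h^{n+1}\|^2=\|\Pi U^{n+1}\|^2\le\|U^{n+1}\|^2$, which gives $R^{n+1}\ge E^{n+1}$. Substituting this bound into the identity and discarding the nonnegative terms on the left yields precisely (\ref{engdis1st+}). Beyond this, the only care required is bookkeeping of the polarization identities and tracking which copy of $U$ is projected.
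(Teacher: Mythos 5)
Your proposal is correct and follows essentially the same route as the paper: for well-posedness, the same cancellation of the $A(u_h^{n+1},q_h^{n+1})$ terms (the paper phrases it as an a priori stability bound implying uniqueness of the square linear system, you phrase it via the homogeneous problem and the Riesz representative $q_w$); for the energy law, the same test function $\phi=u_h^{n+1}-u_h^n$ (the paper uses $D_tu_h^n$, a rescaling), the same differenced equation for $q_h$, the same polarization identities, and the same final appeal to the $L^2$-contractivity of $\Pi$ to convert the exact identity in the unprojected $U^{n+1}$ into the stated inequality for $E^{n+1}$. The only quibble is the phrase ``discarding the nonnegative terms on the left'': one rearranges the identity $S=E^n-R^{n+1}$ together with $R^{n+1}\ge E^{n+1}$ to get $E^{n+1}\le E^n-S$, keeping all of $S$ on the right, exactly as in (\ref{engdis1st+}).
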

\begin{proof}  We first show the existence and uniqueness of (\ref{FPDGFull1st+}) at each time step.  Substitution of (\ref{FPDGFull1st+}b) into (\ref{FPDGFull1st+}c)  with (\ref{FPDGFull1st+}d) gives  the following linear system
 \begin{subequations}\label{FPDGFullAlg+}
    \begin{align}
    \left( \left( \frac{1}{\Delta t} + \frac{H(u^{n}_h)^2}{2} \right)u^{n+1}_h, \phi\right)+\alpha h^{-1} (u_h^{n+1}, \phi)_{\Gamma^{\partial}}+   A(\phi, q^{n+1}_h)= &\left( f^n, \phi\right),\\
   A(u^{n+1}_h,\psi)-( q^{n+1}_h, \psi) = & 0,
    \end{align}
    \end{subequations}
where $f^n=u^{n}_h / \Delta t+1/2H(u^{n}_h)^2 u^{n}_h -H(u^{n}_h)U_h^n$ depends on solutions at $t=t_n$.  Taking $\phi=u_h^{n+1}$ and $\psi=q_h^{n+1}$ in  (\ref{FPDGFullAlg+}), upon subtraction and using $(f^n, \phi)\leq \frac{1}{2\Delta t}\|\phi\|^2+\frac{\Delta t}{2}\|f^n\|^2$ we obtain
$$
\|u_h^{n+1}\|^2 +2\Delta t \|q_h^{n+1}\|^2 + 2\Delta t \alpha h^{-1} \|u_h\|^2_{L^2(\Gamma^{\partial})} \leq \| \Delta t f^n\|^2.
$$
This stability estimate implies the uniqueness of the linear system (\ref{FPDGFullAlg+}), hence its existence  since for a linear system in finite dimensional space, existence is equivalent to its uniqueness.

We next prove (\ref{engdis1st+}). To this end, we define a notation $D_t u_h^n=\frac{u_h^{n+1}-u_h^{n}}{\Delta t}$, also for $q_h^n$. From (\ref{FPDGFull1st+}d), it follows
\begin{align}
(D_tq_h^n, \psi)=A(D_tu_h^n, \psi).
\end{align}
Taking $\psi=q_h^{n+1}$ and $\phi=D_t u_h^n$ in (\ref{FPDGFull1st+}c), when combined and using (\ref{FPDGFull1st+}b) we have
\begin{align*}
-\|D_tu_h^n\|^2  =& \alpha h^{-1} (u_h^{n+1}, D_tu_h^n)_{\Gamma^{\partial}}+(D_t q_h^{n}, q_h^{n+1}) +(H(u_h^n)U^{n+1}, D_tu_h^n)\\
 =&\frac{\alpha}{2h} \left(D_t \|u_h^n\|^2_{L^2(\Gamma^{\partial})} +\Delta t \|D_t u_h^n\|^2_{L^2(\Gamma^{\partial})}\right)  +  \frac{1}{2} D_t \|q_h^n\|^2 +\frac{\Delta t }{2} \|D_t q_h^n\|^2 \\
 & +\frac{2}{\Delta t} \left(U^{n+1}, U^{n+1}-U_h^n \right)\\
 = &\frac{\alpha}{2h} \left(D_t \|u_h^n\|^2_{L^2(\Gamma^{\partial})} +\Delta t \|D_t u_h^n\|^2_{L^2(\Gamma^{\partial})}\right)+ \frac{1}{2} D_t \|q_h^n\|^2 +\frac{\Delta t }{2} \|D_t q_h^n\|^2 \\
&+\frac{1}{\Delta t} \left(\|U^{n+1}\|^2-\|U_h^n\|^2 + \| U^{n+1}-U_h^n\|^2 \right).
\end{align*}
This is nothing but the following identity
\bq\label{engdis1stO}
\begin{aligned}
E(u_h^{n+1}, q_h^{n+1}, U^{n+1}) = & E(u_h^n, q_h^n, U_h^n) - \frac{\| u_h^{n+1} - u_h^n\|^2}{\Delta t}- \frac{\alpha}{2h}\| u_h^{n+1} - u_h^n\|^2_{L^2(\Gamma^{\partial})}\\
& -\frac{1}{2}\|q_h^{n+1} - q_h^n\|^2-\|U^{n+1} - U_h^n\|^2.
\end{aligned}
\eq
Implied by the fact that $\Pi$ is a contraction mapping in $L^2$,  we have
\bq\label{engdis1stO1}
E(u_h^{n+1}, q_h^{n+1}, U_h^{n+1}) \leq E(u_h^{n+1}, q_h^{n+1}, U^{n+1}),
\eq
hence (\ref{engdis1st+}) as desired.

\iffalse
{\color{red}
In fact, the uniqueness of the full system (\ref{FPDGFull1st+}) can also be derived from (\ref{engdis1stO}). Let $(\tilde u, \tilde q, \tilde U)$ be the difference of two possible solutions at $t=t_{n+1}$, then we have
$$
E(\tilde u, \tilde q, \tilde U)=E(0, 0, 0)-\frac{\|\tilde u \|^2}{\Delta t}- \frac{\alpha}{2h}\| \tilde u\|^2_{L^2(\Gamma^{\partial})}-\frac{1}{2}\|\tilde q\|^2-\|\tilde U\|^2,
$$
hence we must have $(\tilde u, \tilde q, \tilde U)=(0, 0, 0)$, leading to the uniqueness.
}
\fi
\end{proof}

\subsection{Second order fully discrete DG scheme}  Here the time discretization
is done in a symmetric fashion around the point $t_{n+1/2}=(n+1/2)\Delta t$, which will produce a second order accurate method in time.
Denote by $v^{n+1/2}=(v^n+v^{n+1})/2$ for $v=u_h, q_h$, we find $(u^{n}_h, q_h^{n}) \in V_h  \times V_h $ such that for $\forall \phi, \psi
\in V_h $,
\begin{subequations}\label{FPDGFull+}
	\begin{align}
	U_h^n= & \Pi U^n,\\
  \frac{U^{n+1} - U_h^n}{\Delta t}  =&  \frac{1}{2} H(u^{n,*}_h) \frac{u_h^{n+1} - u_h^n}{\Delta t},\\
	\left(  \frac{u_h^{n+1} - u_h^n}{\Delta t}, \phi \right) +\alpha h^{-1} (u_h^{n+1/2}, \phi)_{\Gamma^{\partial}}
	= & - A(\phi, q_h^{n+1/2})-\frac{1}{2}\left(H(u^{n,*}_h)(U^{n+1}+U_h^n),\phi \right),\\
	(q_h^{n}, \psi) = & A(u_h^{n},\psi),
	\end{align}
\end{subequations}
where $u^{n,*}_h$ is obtained using $u_h^{n-1}$ and $u^n_h$ by
\begin{align}\label{u8}
u^{n, *}_h=& \frac{3}{2}u_h^n-\frac{1}{2}u_h^{n-1}.
\end{align}
Here instead of  $u_h^{n+1/2}$ we use $u^{n, *}_h$ to avoid the use of iteration steps in updating the numerical solution, while still maintaining second order accuracy in time. When $n=0$ in (\ref{u8}), we simply take  $u_h^{-1}=u_h^{0}$.

For the obtained discrete DG scheme (\ref{FPDGFull+}), we have
\begin{thm}\label{thm2nd}
The  fully discrete DG scheme (\ref{FPDGFull+}) admits a unique solution for any $\Delta t >0$.  Moreover, such scheme satisfies the following discrete energy dissipation law,
\bq\label{engdis}
E^{n+1} \leq E(u_h^{n+1}, q_h^{n+1}, U^{n+1}) =  E^n - \frac{\| u_h^{n+1} - u_h^n\|^2}{\Delta t},
\eq
independent of the size of $\Delta t$.
\end{thm}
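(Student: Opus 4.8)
The plan is to mirror the argument used for the first order scheme in Theorem \ref{firstorder+}, exploiting that the symmetric (midpoint) discretization around $t_{n+1/2}$ replaces the dissipative remainder terms of the first order case by \emph{exact} differences of squares, so that an energy identity (rather than a mere inequality) results. First I would establish existence and uniqueness. Substituting (\ref{FPDGFull+}b) into (\ref{FPDGFull+}c) eliminates $U^{n+1}$, producing on the left the strictly positive coefficient $\frac{1}{\Delta t}+\frac14 H(u^{n,*}_h)^2$ multiplying $u_h^{n+1}$, together with the nonnegative boundary term $\frac{\alpha}{2h}(\cdot,\cdot)_{\Gamma^{\partial}}$ and the bilinear term $\frac12 A(\phi,q_h^{n+1})$; coupling this with (\ref{FPDGFull+}d) advanced to level $n+1$, namely $A(u_h^{n+1},\psi)=(q_h^{n+1},\psi)$, gives a square linear system for $(u_h^{n+1},q_h^{n+1})$. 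Testing with $\phi=u_h^{n+1}$ and $\psi=q_h^{n+1}$ and subtracting yields a coercivity estimate using $\alpha\ge 0$, and since existence is equivalent to uniqueness for a finite-dimensional linear system, this settles the first claim exactly as in (\ref{FPDGFullAlg+}).

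For the energy law, the key preparatory step is to difference (\ref{FPDGFull+}d) at levels $n$ and $n+1$, obtaining $(D_t q_h^n,\psi)=A(D_t u_h^n,\psi)$ for all $\psi$, where $D_t v^n:=(v^{n+1}-v^n)/\Delta t$. I would then test (\ref{FPDGFull+}c) with $\phi=D_t u_h^n$ and insert $\psi=q_h^{n+1/2}$ into the differenced identity, so that $A(D_t u_h^n,q_h^{n+1/2})=(D_t q_h^n,q_h^{n+1/2})$ replaces the bilinear term; observe that no symmetry of $A$ is needed. Two of the three resulting terms telescope exactly because of the midpoint structure: $(D_t q_h^n,q_h^{n+1/2})=\frac{1}{2\Delta t}(\|q_h^{n+1}\|^2-\|q_h^n\|^2)$ and $\frac{\alpha}{h}(u_h^{n+1/2},D_t u_h^n)_{\Gamma^{\partial}}=\frac{\alpha}{2h\Delta t}(\|u_h^{n+1}\|^2_{L^2(\Gamma^{\partial})}-\|u_h^n\|^2_{L^2(\Gamma^{\partial})})$, with no positive leftover of the type $\frac{\Delta t}{2}\|D_t q_h^n\|^2$ that appeared in the first order case.

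The decisive manipulation is the nonlinear term. Here I would use (\ref{FPDGFull+}b) in the form $H(u^{n,*}_h)D_t u_h^n=2(U^{n+1}-U_h^n)/\Delta t$ --- crucially $H$ is frozen at the \emph{same} extrapolated argument $u^{n,*}_h$ in both (\ref{FPDGFull+}b) and (\ref{FPDGFull+}c) --- to rewrite $\frac12\bigl(H(u^{n,*}_h)(U^{n+1}+U_h^n),D_t u_h^n\bigr)$ as $\frac{1}{\Delta t}(U^{n+1}+U_h^n,U^{n+1}-U_h^n)=\frac{1}{\Delta t}(\|U^{n+1}\|^2-\|U_h^n\|^2)$. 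Collecting the three contributions against $\|D_t u_h^n\|^2$ and multiplying by $\Delta t$ produces the exact identity $E(u_h^{n+1},q_h^{n+1},U^{n+1})=E^n-\|u_h^{n+1}-u_h^n\|^2/\Delta t$, recalling $E(u_h,q_h,U)=\frac12\|q_h\|^2+\|U\|^2+\frac{\alpha}{2h}\|u_h\|^2_{L^2(\Gamma^{\partial})}$ from (\ref{ee}). Finally, since $U_h^{n+1}=\Pi U^{n+1}$ and $\Pi$ is an $L^2$ contraction, $\|U_h^{n+1}\|\le\|U^{n+1}\|$ gives $E^{n+1}\le E(u_h^{n+1},q_h^{n+1},U^{n+1})$, completing (\ref{engdis}).

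I expect the main difficulty to be essentially bookkeeping rather than conceptual: one must verify that every cross term assembles into a clean difference of squares so that the scheme is energy-conserving up to the $-\|u_h^{n+1}-u_h^n\|^2/\Delta t$ term, not merely dissipative. The one genuine point requiring care is the consistency of the frozen coefficient $H(u^{n,*}_h)$ across (\ref{FPDGFull+}b) and (\ref{FPDGFull+}c); this identity is exactly what allows the nonlinear term to be integrated by the discrete product rule without spoiling the energy balance, and it is the feature that makes the linearized, iteration-free scheme unconditionally stable.
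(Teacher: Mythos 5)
Your proposal is correct and follows essentially the same route as the paper: difference (\ref{FPDGFull+}d) in $n$, test with $\psi=q_h^{n+1/2}$ and $\phi=D_t u_h^n$, use the shared frozen coefficient $H(u_h^{n,*})$ in (\ref{FPDGFull+}b) to turn the nonlinear term into $\frac{1}{\Delta t}(\|U^{n+1}\|^2-\|U_h^n\|^2)$, and conclude with the $L^2$-contraction of $\Pi$. The only (immaterial) difference is that you prove uniqueness via a coercivity estimate on the reduced linear system as in Theorem \ref{firstorder+}, whereas the paper applies the energy identity to the difference of two solutions; both arguments are valid.
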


\begin{proof}
%Similar to the proof of Theorem \ref{firstorder+}.
% we first show the existence and uniqueness of (\ref{FPDGFull+}).
%Substitution of (\ref{FPDGFull+}b) into (\ref{FPDGFull+}c)  with (\ref{FPDGFull+}d) gives the following linear system
%\begin{subequations}\label{FPDGFullAlg}
%    \begin{align}
%    \left( \left( \frac{1}{\Delta t} + \frac{H(u^{n, *}_h)^2}{4} \right)u^{n+1}_h, \phi\right)+  \frac{1}{2} A(q^{n+1}_h,\phi)= &\left( f^n, \phi\right)- \frac{1}{2}A(q^{n}_h,\phi),\\
%    \frac{1}{2} A(u^{n+1}_h,\psi)-\frac{1}{2} ( q^{n+1}_h, \psi) = & 0,
%    \end{align}
%\end{subequations}
%where $f^n=u^{n}_h / \Delta t+1/4H(u^{n, *}_h)^2 u^{n}_h-H(u^{n, *}_h)U_h^n$ depends on solutions at $t=t_n$.
%Taking $\phi=u_h^{n+1}$ and $\psi=q_h^{n+1/2}$ in (\ref{FPDGFullAlg}),  upon subtraction and using $(f^n, \phi)\leq \frac{1}{2\Delta t}\|\phi\|^2+\frac{\Delta t}{2}\|f^n\|^2$ and $(q_h^{n+1}, q_h^n) \leq \frac{\|q_h^{n+1}\|^2}{2}+\frac{\|q_h^{n}\|^2}{2}$ we obtain
%$$
%\|u_h^{n+1}\|^2 +\frac{\Delta t}{2} \|q_h^{n+1}\|^2 \leq \| \Delta t f^n\|^2+\frac{\Delta t}{2}\|q_h^{n}\|^2.
%$$
%This stability estimate implies the uniqueness of the linear system (\ref{FPDGFullAlg}), which is a linear system in finite dimension space, hence its existence is equivalent to its uniqueness.

We first prove (\ref{engdis}). We continue to use the notation $D_t v^n=\frac{v^{n+1}-v^{n}}{\Delta t}$. From (\ref{FPDGFull+}), it  follows
\bq
(D_t q_h^n, \psi) = A(D_t u_h^n, \psi).
\eq
Taking $\psi=q_h^{n+1/2}$ and $\phi=D_t u_h^n$ in (\ref{FPDGFull+}c), when combined with (\ref{FPDGFull+}b) we have
\begin{align*}
-\|D_tu_h^n\|^2 & = \alpha h^{-1} (u_h^{n+1/2}, D_tu_h^n)_{\Gamma^{\partial}}+ (D_t q_h^{n}, q_h^{n+1/2}) +\frac{1}{2} (H(u_h^{n,*})(U^{n+1}+U_h^n), D_tu_h^n)\\
& =\frac{\alpha}{2h}D_t \|u_h^n\|^2_{L^2(\Gamma^{\partial})}+ \frac{1}{2} D_t \|q_h^n\|^2 +\frac{1}{\Delta t} \left(\|U^{n+1}\|^2-\|U_h^n\|^2\right).
\end{align*}
Multiplying by $\Delta t$ on both sides of this equality, we have
\begin{equation}\label{engdis2nd}
E(u_h^{n+1}, q_h^{n+1}, U^{n+1}) =  E(u_h^n,q_h^n, U_h^n) - \frac{\| u_h^{n+1} - u_h^n\|^2}{\Delta t},
\end{equation}
which combining with (\ref{engdis1stO1}) leads to (\ref{engdis}).

%We next show the existence and uniqueness of (\ref{FPDGFull+}) at each time step. Substitution of (\ref{FPDGFull+}b) into (\ref{FPDGFull+}c)  with (\ref{FPDGFull+}d) gives a linear system, whose existence that leads to the existence of the full system (\ref{FPDGFull+}) is equivalent to its uniqueness,
For the uniqueness, we let  $(\tilde u, \tilde q, \tilde U)$ be the difference of two possible solutions at $t=t_{n+1}$, then a similar analysis to the above yields
$$
E(\tilde u, \tilde q, \tilde U)+\frac{\|\tilde u \|^2}{\Delta t} =0,
$$
hence we must  have $(\tilde u, \tilde q, \tilde U)=(0, 0, 0)$, leading to the uniqueness of the full system (\ref{FPDGFull+}).
\end{proof}

\subsection{Algorithm}
The detail related to the scheme implementation is summarized in the following algorithm (for second order scheme (\ref{FPDGFull+}) only, that for first order scheme (\ref{FPDGFull1st+}) is simpler).
\begin{itemize}
  \item Step 1 (Initialization), from the given initial data $u_0(x)$
  \begin{enumerate}
    \item generate $u_h^0 =\Pi u_0(x) \in V_h $, set $u_h^{-1}=u_h^0$,
    \item solve for $q_h^0$ from (\ref{FPDGFull+}d) based on $u_h^0$, and
   % \item choose appropriate constant $B$. For example, for the Swift-Hohenberg equation (\ref{SH}), we can choose a positive constant $B'$ such that
    %\bq\label{Bdef}
    %B =  B' - \min\left\{\Phi\left( \frac{g+\sqrt{g^2+4\varepsilon}}{2} \right), 0 ,\Phi\left( \frac{g-\sqrt{g^2+4\varepsilon}}{2} \right)\right\}\geq B' >0.
    %\eq
    \item generate $U^0= \sqrt{\Phi(u_0(x)) +B}$, where $B$ is a priori chosen so that $\inf \Phi(w)+B>0$.
    %   \in V_h $ by piecewise $L^2$ projection (\ref{init3}).
  \end{enumerate}
  \item Step 2 (Evolution)
  \begin{enumerate}
  \item Project $U^n$ back into $V_h$, $U^n_h=\Pi U^n$;
    \item Solve the following linear system
    \begin{subequations}\label{FPDGFullAlg}
    \begin{align}
    \left( \left( \frac{1}{\Delta t} + \frac{H(u^{n, *}_h)^2}{4} \right)u^{n+1}_h, \phi\right)+  \frac{1}{2} A(\phi, q^{n+1}_h)+ \frac{\alpha}{2h} (u_h^{n+1}, \phi)_{\Gamma^{\partial}} = & RHS,\\
    \frac{1}{2} A(u^{n+1}_h,\psi)-\frac{1}{2} ( q^{n+1}_h, \psi) = & 0,
    \end{align}
    \end{subequations}
    where $ u^{n, *}_h=\frac{3}{2}u^n_h-\frac{1}{2}u_h^{n-1}$, and
    $$
    RHS=\left( f^n, \phi\right)- \frac{1}{2}A(\phi, q^{n}_h) - \frac{\alpha}{2h} (u_h^{n}, \phi)_{\Gamma^{\partial}},
    $$
    with $ f^n=u^{n}_h / \Delta t+1/4H(u^{n, *}_h)^2 u^{n}_h-H(u^{n, *}_h)U_h^n$.
      \item Update $U^{n+1}$ using (\ref{FPDGFull+}), then return to (1) in Step 2.
  \end{enumerate}
\end{itemize}
{Note that (\ref{FPDGFullAlg}) is a linear system with sparse coefficient matrix which is changing at each time step, we solve it by the open source deal.II  finite element library as documented in \cite{BHK07}, using an incomplete LU factorization as a preconditioner and preconditioned
flexible GMRES as a solver.}

\begin{rem} {Recently the SAV method has been introduced in \cite{SY18}  with certain advantages over the $IEQ$.  The basic idea when applied to the present setting is to introduce a scalar auxiliary variable $r=\sqrt{\int_{\Omega} \Phi(u_h)dx +B }$, and update $r$ by $r_t=\frac{1}{2r}\int_{\Omega} \Phi'(u_h) u_{ht}dx$. A replacement of $(\Phi'(u_h^{n+1}), \phi)$ by $(\Phi'(u_h^{n}), \phi)\frac{r^{n+1}}{r^n}$ yields a  linearized scheme which can be shown unconditional energy stable. It appears more involved to solve the resulting system efficiently within the DG framework.
%does not seem advantageous over that in (\ref{FPDGFull1st+}c).
}
\end{rem}

\subsection{Fully discrete DG scheme for non-homogeneous boundary conditions}  For non-homogeneous boundary conditions (i)-(iii) given in (\ref{3bd}), the fully discrete DG schemes for (\ref{SemiDGN}) need to be modified.

For the first order fully discrete DG scheme (\ref{FPDGFull1st+}),  equations (\ref{FPDGFull1st+}c) and (\ref{FPDGFull1st+}d) need to be modified as
\begin{equation*}%\label{FPDGFull1st++}
\begin{aligned}
\left(  \frac{u_h^{n+1} - u_h^n}{\Delta t}, \phi \right) + \alpha h^{-1} (u_h^{n+1}, \phi)_{\Gamma^{\partial}} = & - A(\phi, q_h^{n+1})-\left( H(u_h^n)U^{n+1}, \phi \right)+ L_1(t^{n+1};\phi), \\
(q_h^{n},\psi) = & A(u_h^{n}, \psi)+ L_2(t^{n};\psi).
\end{aligned}
\end{equation*}

For the second order fully discrete DG scheme (\ref{FPDGFull+}), equations  (\ref{FPDGFull+}c)  and (\ref{FPDGFull+}d) need to be modified as
\begin{equation*}%\label{FPDGFull+}
\begin{aligned}
	\left(  \frac{u_h^{n+1} - u_h^n}{\Delta t}, \phi \right) +\alpha h^{-1} (u_h^{n+1/2}, \phi)_{\Gamma^{\partial}}
	= & - A(\phi, q_h^{n+1/2})-\frac{1}{2}\left(H(u^{n,*}_h)(U^{n+1}+U_h^n),\phi \right)\\
   & + \frac{1}{2}L_1(t^{n+1};\phi) + \frac{1}{2}L_1(t^{n};\phi), \\
	(q_h^{n}, \psi) = & A(u_h^{n},\psi)+ L_2(t^{n};\psi).
\end{aligned}
\end{equation*}
{It is known that for non-homogeneous boundary conditions given in (\ref{3bd}), the energy dissipation law  (\ref{engdisO}) needs to be replaced by
\bq\label{engdisO+}
\frac{d}{dt}\mathcal{E}(u) =  -\int_{\Omega}|u_t|^2dx +J,
\eq
where the boundary contribution $J=\int_{\partial \Omega} (u_t \partial_\nu q-\partial_\nu u_t q)ds$ with $q=-(\Delta +a/2)$ depends on
the available boundary data and the involved solution traces.  For the above two schemes, energy variation in time can be  derived in entirely similar manner to that leading to (\ref{engdis1st+}) and (\ref{engdis}), respectively, with attention necessary only on boundary contributions.}

\section{Numerical examples}
In this section we numerically test the orders of convergence in both spatial and temporal discretization, and the unconditional energy stability; further apply  scheme  (\ref{FPDGFull+})  to recover some known patterns governed by the 2D Swift--Hohenberg equation.  The errors between the numerical solution $u_h^n(x, y)$ and the exact solution or a reference solution $u(t^n,x,y)$ are evaluated in the following manner. %the error of the numerical solution $u_h^n$ to the exact solution or reference solution $u(t,x,y)$ at $t^n$ is measured in $L^\infty$ norm (and $L^2$ norm if the exact solution is given).
% The 2D $L^\infty$ error is given by
% $$
%e_h^n = \max_{i}\max_{0\leq l \leq 2m}\max_{0\leq s \leq 2m} |u_h^n(x^i_{l},y^i_{s})-u(t^n,x^i_{l},y^i_{s})|,
%$$
%where $x^i_{l}=x^i+(l-m)\frac{h^i_x}{4}$, $y^i_{s}=y^i+(s-m)\frac{h^i_y}{4}$, in rectangular element $K_i$ of size  $h^i_x h^i_y$. We take $m=2$ for $k=1, 2, 3$. % are the length of element $K_i$ in $x$-direction and $y$-direction.
%The $L^2$ error is given by
%$$
%e_h^n = \left(\sum_{i} \frac{h^i_xh^i_y}{4} \sum_{\alpha=1}^{G}\sum_{\beta=1}^{G} \omega_{\alpha,\beta} |u_h^n(\hat{x}^i_{\alpha},\hat{y}^i_{\beta})-u(t^n,\hat{x}^i_{\alpha},\hat{y}^i_{\beta})|^2\right)^{1/2},
%$$
%where $\omega_{\alpha,\beta}>0$ are the weights, and $(\hat{x}^i_{\alpha},\hat{y}^i_{\beta})$ are the corresponding quadrature points for $G \geq k+1$.
{ The 2D $L^\infty$ error is given by
 $$
e_h^n = \max_{i}\max_{0\leq l \leq G}\max_{0\leq s \leq G} |u_h^n(\hat{x}^i_{l},\hat{y}^i_{s})-u(t^n,\hat{x}^i_{l},\hat{y}^i_{s})|,
$$
and the $L^2$ error is given by
$$
e_h^n = \left(\sum_{i} \frac{h^i_xh^i_y}{4} \sum_{l=1}^{G}\sum_{s=1}^{G} \omega_{l,s} |u_h^n(\hat{x}^i_{l},\hat{y}^i_{s})-u(t^n,\hat{x}^i_{l},\hat{y}^i_{s})|^2\right)^{1/2},
$$
where $\omega_{l,s}>0$ are the weights, and $(\hat{x}^i_{l},\hat{y}^i_{s})$ are the corresponding quadrature points for $G \geq k+1$.}
The experimental orders of convergence (EOC) at $T=n\Delta t=2n (\Delta t /2)$ in terms of $h$ and $\Delta t$ are then determined respectively by
$$
\text{EOC}=\log_2 \left( \frac{e_h^n}{e_{h/2}^n}\right),  \quad \text{EOC}=\log_2 \left( \frac{e_h^n}{e_{h}^{2n}}\right).
$$
%For these examples, we choose a constant $B$ such that $\Phi(w)+B \geq 1, \ \forall w \in \mathbb{R}$.
{ Different choices for $B$, as numerically verified in most cases, can work equally well, so we take $B=1$ for all examples except in Example \ref{varmethods}.}  In our numerical examples we output $E(u_h^n, q_h^n, U_h^n)-B|\Omega|$ instead of $E(u_h^n, q_h^n, U_h^n)$  to  better observe  the evolution of the original free energy $\mathcal{E}_h^n$.
%and avoid the up and down translation in the energy evolution graph for choosing different constant $B$.

Note that our numerical scheme is established for the model equation (\ref{fourthPDE}), which includes the Swift--Hohenberg equation (\ref{SH}) as a special case with $a=2$ and
$$
 \Psi(u)=\frac{1-\epsilon}{2}u^2 -\frac{g}{3}u^3 +\frac{u^4}{4},
$$
modulo an additive constant. For any $g$,  such $\Psi$ satisfies (\ref{pa}), which is necessary for the use of the IEQ approach.  In  the following numerical examples we focus mainly on  the Swift--Hohenberg equation with different choices of $\epsilon$ and/or $g$.

\begin{example}\label{Ex1dAccS} (Spatial Accuracy Test)
%Consider the Swift-Hohenberg equation (\ref{SH}) on a rectangular domain $\Omega$ with parameters $\varepsilon=0.025$ and $g=0$, and the initial data
%\bq\label{initex1}
%u_0(x,y) = \sin(x/2)\sin(y/2), \quad (x, y) \in \Omega.
%\eq
%This example is to test the spatial accuracy on 2D rectangular meshes, subject to different types of boundary conditions, while
%we use the second order fully discrete DG scheme (\ref{FPDGFull+}) using degree $k$ polynomials with $k =1,\ 2,\ 3$.

%We will obtain the $L^\infty$ error and orders of accuracy based on mesh size $N=8 \times 8$, $N=16 \times 16$, $N=32 \times 32$, and take the solution at time $t=0.1$ with mesh $N=64 \times 64$ as a reference solution if the exact solution is not given.
%as shown in Table \ref{tab2dacc}.
%We test this example by the DG scheme (\ref{FPDGFull}) based on $P^k$ polynomials of degree $k$ with $k =1,\ 2,\ 3$.
%Both the errors and orders of accuracy at $t=0.1$ are reported in Table \ref{tab2dacc}.

Consider the Swift--Hohenberg equation (\ref{SH}) by adding a source term
$f(x,y, t)=- \varepsilon v -gv^2+ v^3$ with $v=e^{-t/4}\sin(x/2)\sin(y/2)$
for some parameters $\varepsilon, g$,  and the initial data
\bq\label{initex1}
u_0(x,y) = \sin(x/2)\sin(y/2), \quad (x, y) \in \Omega.
\eq
Its exact solution is given by
\bq\label{uexact}
u(x,y,t) = e^{-t/4}\sin(x/2)\sin(y/2), \quad (x, y) \in \Omega.
\eq
This example is to test the spatial accuracy on 2D rectangular meshes, subject to different types of boundary conditions,
we use the second-order fully discrete DG scheme (\ref{FPDGFull+}) with
$$
\frac{1}{2}\left(f(\cdot, t^{n+1}, \phi)+f(\cdot, t^{n}, \phi)\right),
$$
added to the right hand side of (\ref{FPDGFull+}c)
using polynomials of degree $k$ with $k =1,\ 2,\ 3$.

\noindent\textbf{Test case 1.} (Periodic boundary conditions) For parameters $\varepsilon=0.025, g=0$ and domain $\Omega=[-2\pi, 2\pi]^2$ with periodic boundary conditions.  Both errors and orders of convergence at $T=0.1$ are reported in Table \ref{tab2dacc}. These results confirm the $(k+1)$th orders of accuracy in $L^2, L^\infty$ norms.

\begin{table}[!htbp]\tabcolsep0.03in
\caption{$L^2, L^\infty$ errors and EOC at $T = 0.1$ with mesh $N\times N$.}
\begin{tabular}[c]{||c|c|c|c|c|c|c|c|c|c||}
\hline
\multirow{2}{*}{$k$} & \multirow{2}{*}{$\Delta t$}&   \multirow{2}{*}{ } & N=8 & \multicolumn{2}{|c|}{N=16} & \multicolumn{2}{|c|}{N=32} & \multicolumn{2}{|c||}{N=64}  \\
\cline{4-10}
& & & error & error & order & error & order & error & order\\
\hline
\multirow{2}{*}{1}  & \multirow{2}{*}{1e-3} & $\|u-u_h\|_{L^2}$ &  3.96917e-01 & 9.53330e-02 & 2.06 & 2.34412e-02 & 2.02 & 5.86903e-03 & 2.00  \\
\cline{3-10}
 & & $\|u-u_h\|_{L^\infty}$  & 1.46432e-01 & 3.75773e-02 & 1.96 & 9.40110e-03 & 2.00 & 2.35038e-03 & 2.00  \\
\hline
\hline
\multirow{2}{*}{2}  & \multirow{2}{*}{1e-4} & $\|u-u_h\|_{L^2}$ & 1.00063e-01 & 1.48191e-02 & 2.76 & 1.98345e-03 & 2.90 & 2.60819e-04 & 2.93  \\
\cline{3-10}
 & & $\|u-u_h\|_{L^\infty}$  & 2.57951e-02 & 3.16978e-03 & 3.02 & 4.30633e-04 & 2.88 & 5.61561e-05 & 2.94  \\
 \hline
\hline
\multirow{2}{*}{3} & \multirow{2}{*}{1e-5}  & $\|u-u_h\|_{L^2}$ & 1.34590e-02 & 1.10668e-03 & 3.60 & 7.55223e-05 & 3.87 & 4.83308e-06 & 3.97  \\
\cline{3-10}
 & & $\|u-u_h\|_{L^\infty}$   & 4.07154e-03 & 3.60524e-04 & 3.50 & 2.38081e-05 & 3.92 & 1.51432e-06 & 3.97  \\
\hline
\end{tabular}\label{tab2dacc}
\end{table}

\noindent\textbf{Test case 2.} For parameters $\varepsilon=0.025, g=0$ and domain $\Omega=[0, 2\pi]^2$ with boundary condition $u = \Delta u = 0, \ (x,y) \in \partial \Omega$,  we use
 scheme (\ref{FPDGFull+}) with  $\alpha=0$ and $\beta_0=0$ in (\ref{bd}c). Both  errors and orders of convergence at $T=0.1$ are reported in Table \ref{tab2daccDir2}. These results also show that $(k+1)$th orders of accuracy in $L^2, L^\infty$ norms are obtained.

\begin{table}[!htbp]\tabcolsep0.03in
\caption{$L^2, L^\infty$ errors and EOC at $T = 0.1$ with mesh $N\times N$.}
\begin{tabular}[c]{||c|c|c|c|c|c|c|c|c|c||}
\hline
\multirow{2}{*}{$k$} & \multirow{2}{*}{$\Delta t$}&   \multirow{2}{*}{ } & N=8 & \multicolumn{2}{|c|}{N=16} & \multicolumn{2}{|c|}{N=32} & \multicolumn{2}{|c||}{N=64}  \\
\cline{4-10}
& & & error & error & order & error & order & error & order\\
\hline
\multirow{2}{*}{1}  & \multirow{2}{*}{1e-3} & $\|u-u_h\|_{L^2}$ &  4.76650e-02 & 1.17160e-02 & 2.02 & 2.91618e-03 & 2.01 & 7.28242e-04 & 2.00  \\
\cline{3-10}
 & & $\|u-u_h\|_{L^\infty}$  & 3.75725e-02 & 9.39988e-03 & 2.00 & 2.35007e-03 & 2.00 & 5.87520e-04 & 2.00  \\
\hline
\hline
\multirow{2}{*}{2}  & \multirow{2}{*}{1e-4} & $\|u-u_h\|_{L^2}$ & 7.40928e-03 & 9.91089e-04 & 2.90 & 1.26183e-04 & 2.97 & 1.58469e-05 & 2.99  \\
\cline{3-10}
 & & $\|u-u_h\|_{L^\infty}$  & 3.22366e-03 & 4.35251e-04 & 2.89 & 5.55145e-05 & 2.97 & 6.97483e-06 & 2.99  \\
 \hline
\hline
\multirow{2}{*}{3} & \multirow{2}{*}{5e-5}  & $\|u-u_h\|_{L^2}$ & 5.53341e-04 & 3.77612e-05 & 3.87 & 2.41654e-06 & 3.97 & 1.51952e-07 & 3.99  \\
\cline{3-10}
 & & $\|u-u_h\|_{L^\infty}$   & 3.60523e-04 & 2.38081e-05 & 3.92 & 1.51433e-06 & 3.97 & 9.51458e-08 & 3.99  \\
\hline
\end{tabular}\label{tab2daccDir2}
\end{table}

\noindent\textbf{Test case 3.} For parameters $\varepsilon=0.025, g=0.05$ and domain $\Omega=[-\pi, \pi]^2$ with  boundary condition $\partial_\nu u = \partial_\nu \Delta u = 0, \ (x,y) \in \partial \Omega$.  Both errors and orders of convergence at $T=0.1$ are reported in Table \ref{tab2daccNeu}. These results also show that $(k+1)$th orders of accuracy in both $L^2$ and $L^\infty$ norms are obtained.

\begin{table}[!htbp]\tabcolsep0.03in
\caption{$L^2, L^\infty$ errors and EOC at $T = 0.1$ with mesh $N\times N$.}
\begin{tabular}[c]{||c|c|c|c|c|c|c|c|c|c||}
\hline
\multirow{2}{*}{$k$} & \multirow{2}{*}{$\Delta t$}&   \multirow{2}{*}{ } & N=8 & \multicolumn{2}{|c|}{N=16} & \multicolumn{2}{|c|}{N=32} & \multicolumn{2}{|c||}{N=64}  \\
\cline{4-10}
& & & error & error & order & error & order & error & order\\
\hline
\multirow{2}{*}{1}  & \multirow{2}{*}{1e-3} & $\|u-u_h\|_{L^2}$ &  4.76652e-02 & 1.17160e-02 & 2.02 & 2.91618e-03 & 2.01 & 7.28242e-04 & 2.00  \\
\cline{3-10}
 & & $\|u-u_h\|_{L^\infty}$  & 3.75721e-02 & 9.39988e-03 & 2.00 & 2.35007e-03 & 2.00 & 5.87520e-04 & 2.00  \\
\hline
\hline
\multirow{2}{*}{2}  & \multirow{2}{*}{1e-4} & $\|u-u_h\|_{L^2}$ & 7.40926e-03 & 9.91089e-04 & 2.90 & 1.26183e-04 & 2.97 & 1.58469e-05 & 2.99  \\
\cline{3-10}
 & & $\|u-u_h\|_{L^\infty}$  & 3.22365e-03 & 4.35251e-04 & 2.89 & 5.55145e-05 & 2.97 & 6.97483e-06 & 2.99  \\
 \hline
\hline
\multirow{2}{*}{3} & \multirow{2}{*}{5e-5}  & $\|u-u_h\|_{L^2}$ & 5.53341e-04 & 3.77611e-05 & 3.87 & 2.41654e-06 & 3.97 & 1.51951e-07 & 3.99  \\
\cline{3-10}
 & & $\|u-u_h\|_{L^\infty}$   & 3.60523e-04 & 2.38081e-05 & 3.92 & 1.51405e-06 & 3.97 & 9.53835e-08 & 3.99  \\
\hline
\end{tabular}\label{tab2daccNeu}
\end{table}

\end{example}

\begin{example} In this example, we consider the problem with both a source and non-homogeneous boundary conditions of type (i) in
(\ref{3bd}):
\begin{align*}%\label{Ex1Biharmonic2Duxx}
    & u_t =  -(\Delta +1)^2 u +0.025 u-u^3 + f(x,y, t) \quad (x,y,t) \in [0, 2\pi]\times [0, 2\pi]\times (0,T],\\
    & u(x,y,0) =  \sin(x/2)\sin(y/2),\\
    & u(0,y,t)=u(2\pi,y,t)=u(x,0,t)=u(x,2\pi,t)=0, \\
    & \partial_x u (0,y,t) = 1/2 e^{-t/4}\sin(y/2), \ \partial_x u(2\pi,y,t)=-1/2 e^{-t/4}\sin(y/2), \\
    & \partial_y  u(x,0,t)=1/2 e^{-t/4}\sin(x/2), \  \partial_y u(x,2\pi,t)=-1/2 e^{-t/4}\sin(x/2),
\end{align*}
where $f(x,y, t)=- 0.025 v + v^3$ with $v=e^{-t/4}\sin(x/2)\sin(y/2)$.  Its exact solution is given by (\ref{uexact}).
%$$
%u=e^{-t/4}\sin(x/2)\sin(y/2).
%$$
We test the second order fully discrete DG scheme (\ref{FPDGFull+}) with
$$
\frac{1}{2}(L_1(t^{n+1}; \phi)+L_1(t^n; \phi)) +\frac{1}{2}(f(\cdot, t^{n+1}, \phi)+f(\cdot, t^{n}, \phi))
$$
added to (\ref{FPDGFull+}c) and $L_2(t^n; \psi)$ added to (\ref{FPDGFull+}d), based on $P^k$ polynomials with $k=1,2, 3$.
%the parameters $\alpha =\beta_1= 0$ if $k =1$ and $\alpha=\beta_1 = 3$ if $k = 2$ (note that compared to comment in (\ref{beta}),  for nonlinear problem a slightly bigger $\alpha$ when $k\geq 2$ can make the scheme
%more stable).
The flux parameter $\beta_1=1$.  Both the errors and orders of convergence at $T=0.1$ are reported in Table \ref{tab2daccDir1}. These results show that $(k+1)$th orders of accuracy in both $L^2$ and  $L^\infty$ are obtained.
\begin{table}[!htbp]\tabcolsep0.03in
\caption{$L^2, L^\infty$ errors and EOC at $T = 0.1$ with mesh $N\times N$.}
\begin{tabular}[c]{||c|c|c|c|c|c|c|c|c|c||}
\hline
\multirow{2}{*}{$k$} & \multirow{2}{*}{$\Delta t$}&   \multirow{2}{*}{ } & N=8 & \multicolumn{2}{|c|}{N=16} & \multicolumn{2}{|c|}{N=32} & \multicolumn{2}{|c||}{N=64}  \\
\cline{4-10}
& & & error & error & order & error & order & error & order\\
\hline
\multirow{2}{*}{1}  & \multirow{2}{*}{1e-3} & $\|u-u_h\|_{L^2}$ &  5.12416e-02 & 1.26151e-02 & 2.02 &3.33581e-03 & 1.92 & 9.37490e-04 & 1.83  \\
%\cline{3-10}
% & & $|u-u_h|_{H^1}$  & 1.09765 & 0.51869 & 1.08 & 0.248985 & 1.06 & 0.123212 & 1.01  \\
\cline{3-10}
 & & $\|u-u_h\|_{L^\infty}$  & 4.53223e-02 & 1.29022e-02 & 1.81 & 3.56895e-03 & 1.85 & 1.07682e-03 & 1.73  \\
\hline
\hline
\multirow{2}{*}{2}  & \multirow{2}{*}{1e-4} & $\|u-u_h\|_{L^2}$ & 6.90060e-03 & 1.10206e-03 & 2.65 & 1.34465e-04 & 3.03 & 1.63762e-05 & 3.04  \\
%\cline{3-10}
% & & $|u-u_h|_{H^1}$  & 0.197434 & 0.057513 & 1.78 & 0.0150231 & 1.94 & 0.00373757 & 2.01  \\
\cline{3-10}
 & & $\|u-u_h\|_{L^\infty}$  & 2.82321e-03 & 5.84377e-04 & 2.27 & 7.6956e-05 & 2.92 & 9.66781e-06 & 2.99  \\
 \hline
\hline
\multirow{2}{*}{3} & \multirow{2}{*}{1e-5}  & $\|u-u_h\|_{L^2}$ & 5.98414e-04 & 4.09284e-05 & 3.87 & 2.52723e-06 & 4.02 & 1.59071e-07 & 3.99  \\
%\cline{3-10}
% & & $|u-u_h|_{H^1}$   & 0.0626366 & 0.00702221 & 3.16 & 0.000839367 & 3.06 & 0.000103717 & 3.02  \\
\cline{3-10}
 & & $\|u-u_h\|_{L^\infty}$   & 5.14633e-04 & 5.04236e-05 & 3.35 & 3.18953e-06 & 3.98 & 1.91613e-07 & 4.06  \\
\hline
\end{tabular}\label{tab2daccDir1}
\end{table}

\end{example}

\begin{example}\label{Ex1dAccT} (Temporal Accuracy Test)  Consider the Swift--Hohenberg equation (\ref{SH}) on the domain $\Omega= [-2\pi, 2\pi]^2$ with the parameters $\varepsilon=0.025$ and $g=0$, the initial data
\bq\label{initex12}
u_0(x,y) = \sin(x/4)\sin(y/4).
\eq
and generalized Neumann boundary conditions $\partial_\nu u = \partial_\nu \Delta u = 0, \ (x,y) \in \partial \Omega$.
%The purpose of this example is to provide numerical evidence for the DG schemes (\ref{FPDGFull1st+}) and (\ref{FPDGFull+}) being first-order andsecond-order accuracy in time, respectively.

We compute a reference solution at $T=2$ using DG schemes (\ref{FPDGFull1st+}) and (\ref{FPDGFull+}) based on $P^2$ polynomials with time step $\Delta t=2^{-8}$ and appropriate meshes.  Numerical solutions are produced using larger time steps $\Delta t=2^{-m}$ with $3\leq m\leq 6$.  The $L^2, L^\infty$ errors and orders of convergence are shown in Table \ref{timeacc}, and these results confirm that DG schemes (\ref{FPDGFull1st+}) and (\ref{FPDGFull+}) are first order and second order in time, respectively.

\begin{table}[!htbp]\tabcolsep0.03in
\caption{$L^2, L^\infty$ errors and EOC at $T = 2$ with time step $\Delta t$.}
\begin{tabular}[c]{||c|c|c|c|c|c|c|c|c|c||}
\hline
\multirow{2}{*}{Scheme} & \multirow{2}{*}{Mesh}&   \multirow{2}{*}{ } & $\Delta t=2^{-3}$ & \multicolumn{2}{|c|}{$\Delta t=2^{-4}$} & \multicolumn{2}{|c|}{$\Delta t=2^{-5}$} & \multicolumn{2}{|c||}{$\Delta t=2^{-6}$}  \\
\cline{4-10}
& & & error & error & order & error & order & error & order\\
\hline
\multirow{2}{*}{(\ref{FPDGFull1st+})}  & \multirow{2}{*}{$32\times32$} & $\|u-u_h\|_{L^2}$ &  8.19277e-02 & 4.11370e-02 & 0.99 & 1.96177e-02 & 1.07 & 8.50327e-03 & 1.21  \\
\cline{3-10}
 & & $\|u-u_h\|_{L^\infty}$  & 1.07659e-02 & 5.43477e-03 & 0.99 & 2.59422e-03 & 1.07 & 1.12483e-03 & 1.21  \\
\hline
\hline
\multirow{2}{*}{(\ref{FPDGFull+})}  & \multirow{2}{*}{$64\times64$} & $\|u-u_h\|_{L^2}$ & 7.31631e-03 & 1.40500e-03 & 2.38 & 3.09235e-04 & 2.18 & 6.97759e-05 & 2.15  \\
\cline{3-10}
 & & $\|u-u_h\|_{L^\infty}$  & 1.74374e-03 & 2.64806e-04 & 2.72 & 5.34755e-05 & 2.31 & 1.17938e-05 & 2.18  \\
 \hline
\end{tabular}\label{timeacc}
\end{table}

\end{example}

\begin{example}\label{Ex2dPatt} (2D energy evolution) Consider the Swift--Hohenberg equation (\ref{SH}) on rectangular domain $\Omega=[0,40]^2$ with parameters $\epsilon=2$, $g=0$, initial data
\begin{equation}\label{Exinit}
u(x,y,0)=
\left \{
\begin{array}{ll}
    1,&   x_1<x<x_2,\\
    -1,&  \text{otherwise},
\end{array}
\right.
\end{equation}
where $x_1=\sin\left(\frac{2\pi}{10}y\right)+15$ and $x_2=\cos\left(\frac{2\pi}{10}y\right)+25$ form a curvy vertical strip, and the boundary conditions
$\partial_\nu u = \partial_\nu \Delta u = 0, \ (x,y) \in \partial \Omega$. This example is taken from \cite{GN12}, using the equations of the curvy vertical strip described therein.
%The problem with a different curvy vertical strip was studied in \cite{GN12} focusing on the energy stability in terms of time step size $\Delta t$ for various time algorithms on a sufficiently fine mesh.
We solve this problem by scheme (\ref{FPDGFull+}) based on $P^2$ polynomials on $64 \times 64$ meshes.  The energy evolution in time with $t\in[0,10]$ for varying time steps are shown in Figure \ref{2du12eng}, from which we see that scheme (\ref{FPDGFull+}) is always energy dissipating for any $\Delta t$ as tested, however the size of $\Delta t$ appears to affect the decay rate of the energy.  These numerical results suggest that time step should be chosen with case. One possibility is to set up an energy threshold in such a way that if the energy is about such threshold, $\Delta t$ should be small, and after energy falls below the threshold, one can simply adjust to a larger time step.

Furthermore, the numerical solutions with $\Delta t =0.001$ are shown in Figure \ref{PatBifur}, which reveals a series of evolved patterns in time.  %$t=0,5,10,20,40,60$.
The  energy evolution over a larger time interval is also given in Figure \ref{BifEng}, which again shows the energy dissipation property of numerical solutions.

 \begin{figure}
 \centering
 \subfigure[]{\includegraphics[width=0.325\textwidth]{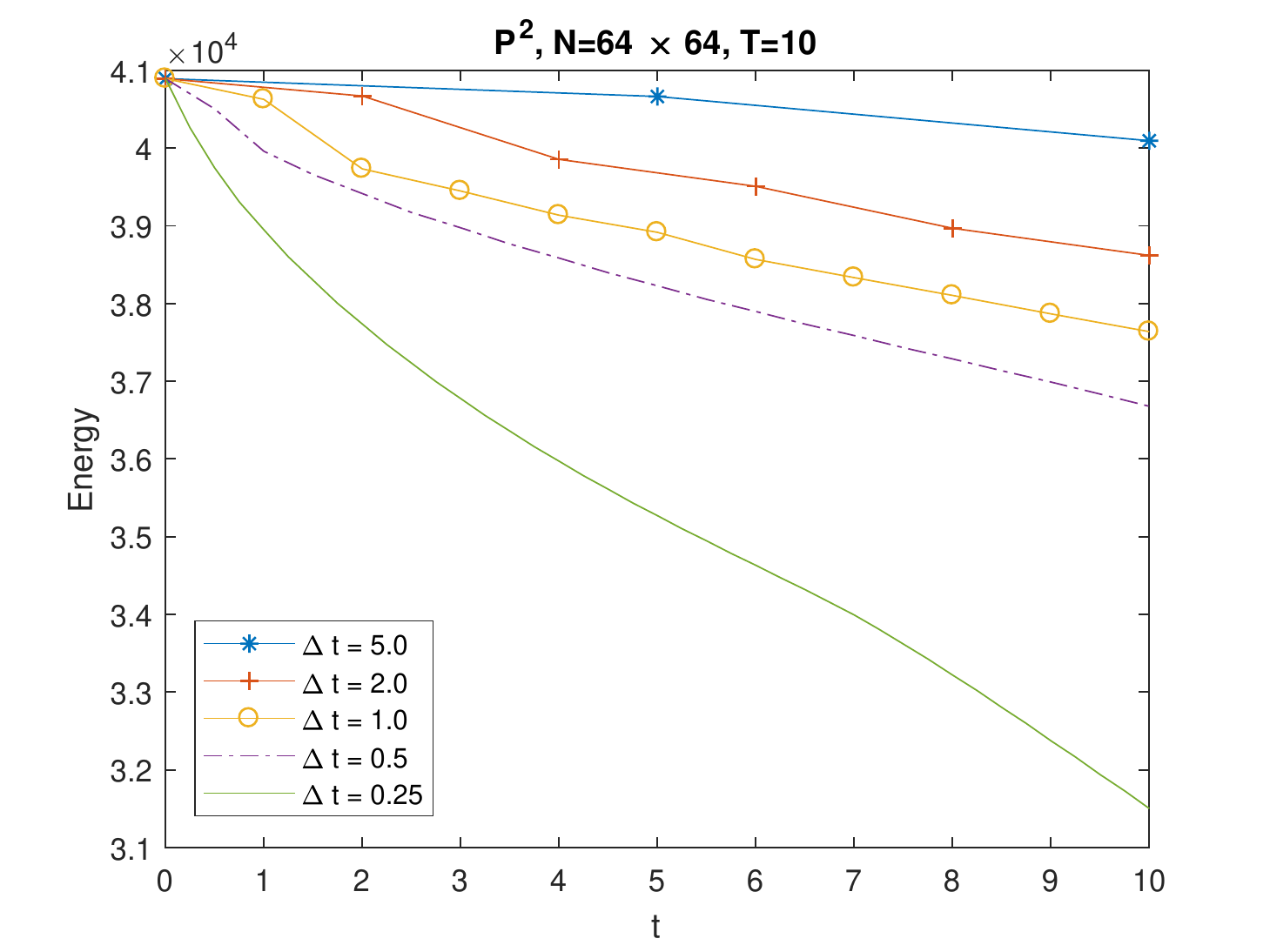}}
 \subfigure[]{\includegraphics[width=0.325\textwidth]{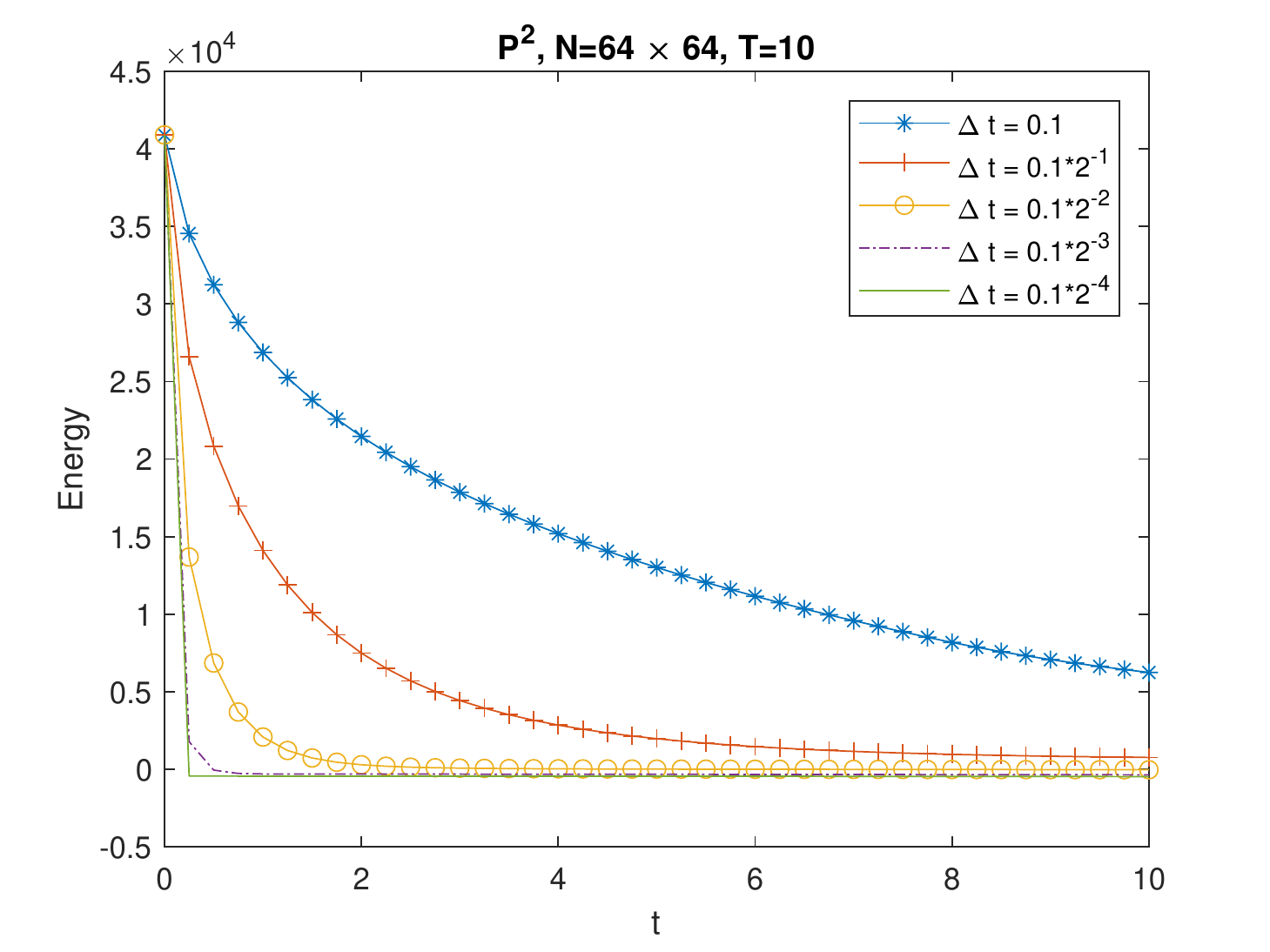}}
 \subfigure[]{\includegraphics[width=0.325\textwidth]{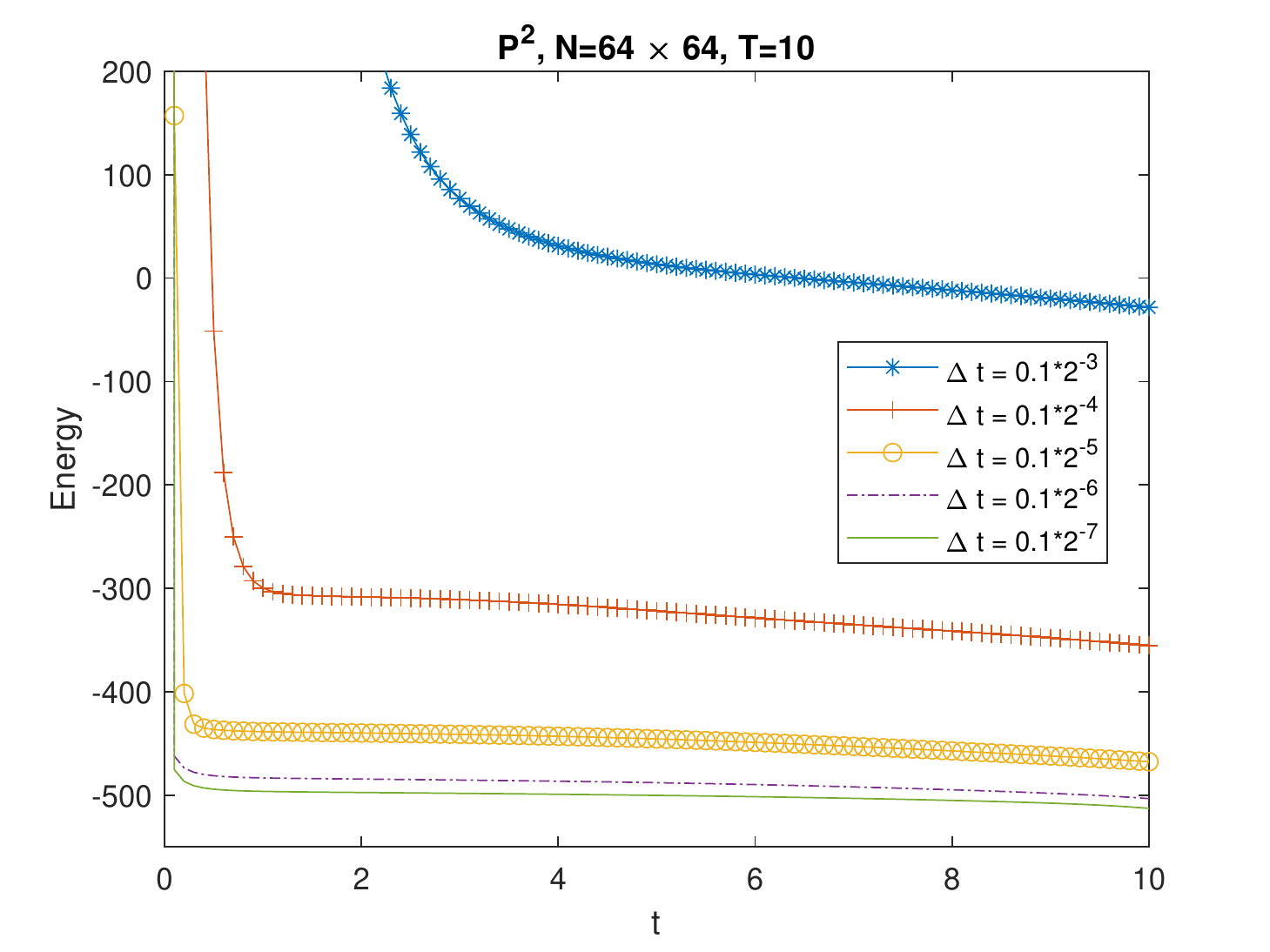}}
 \caption{ Energy evolution for several time steps using the DG scheme (\ref{FPDGFull+}), (a) normal view, (b) normal view, (c) zoomed view.
 } \label{2du12eng}
 \end{figure}

 \begin{figure}
 \centering
 \subfigure{\includegraphics[width=0.325\textwidth]{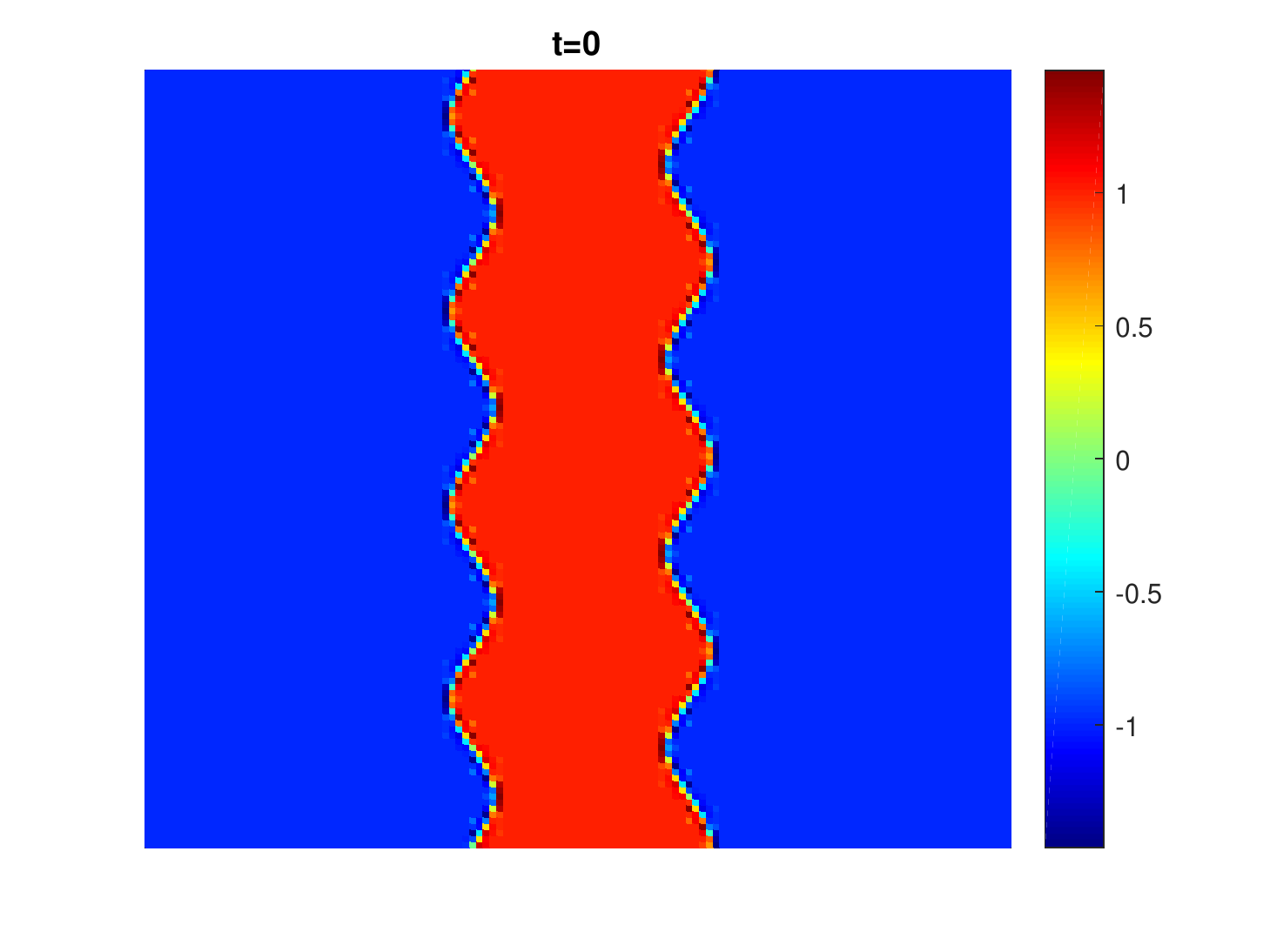}}
 \subfigure{\includegraphics[width=0.325\textwidth]{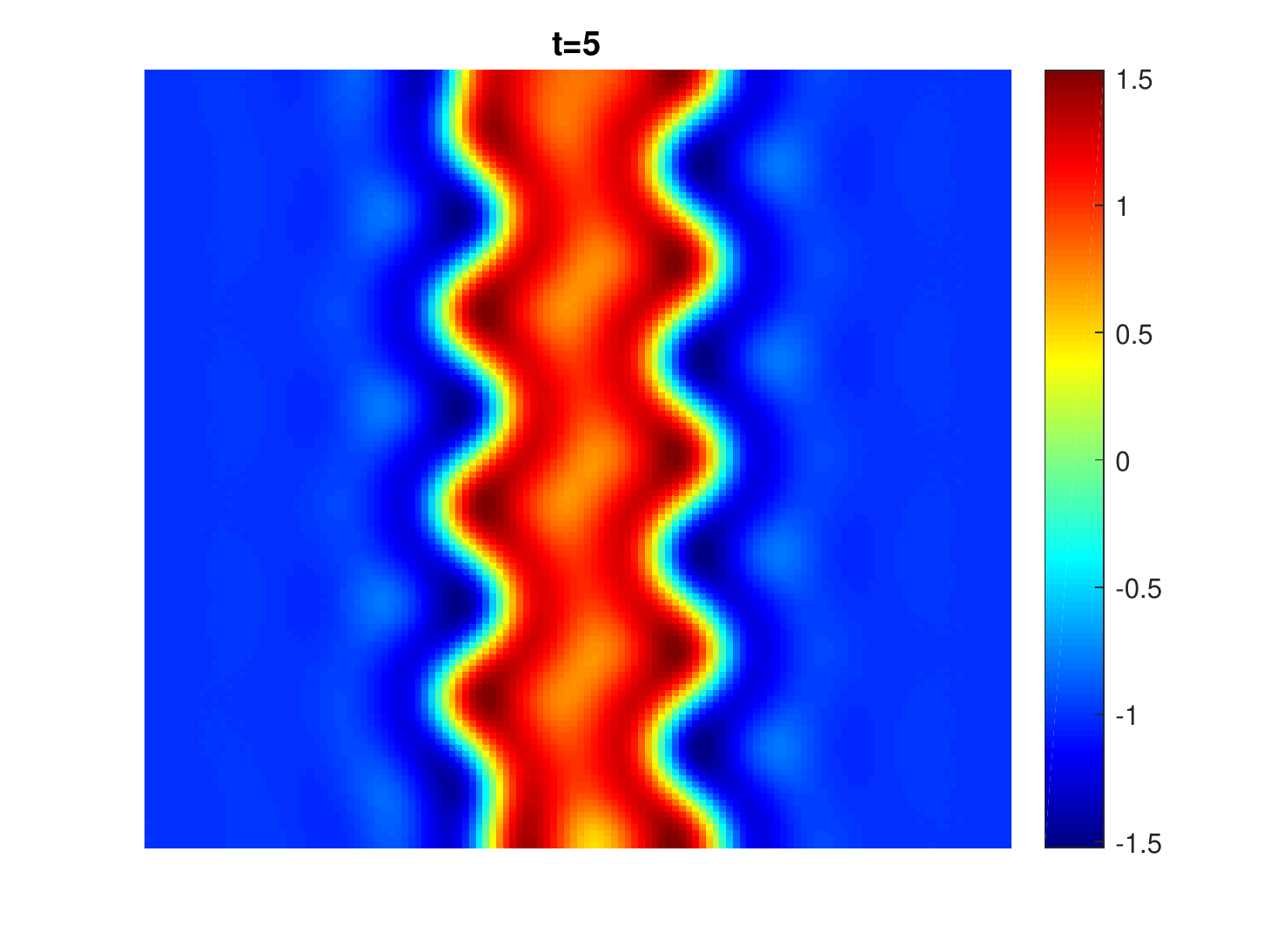}}
 \subfigure{\includegraphics[width=0.325\textwidth]{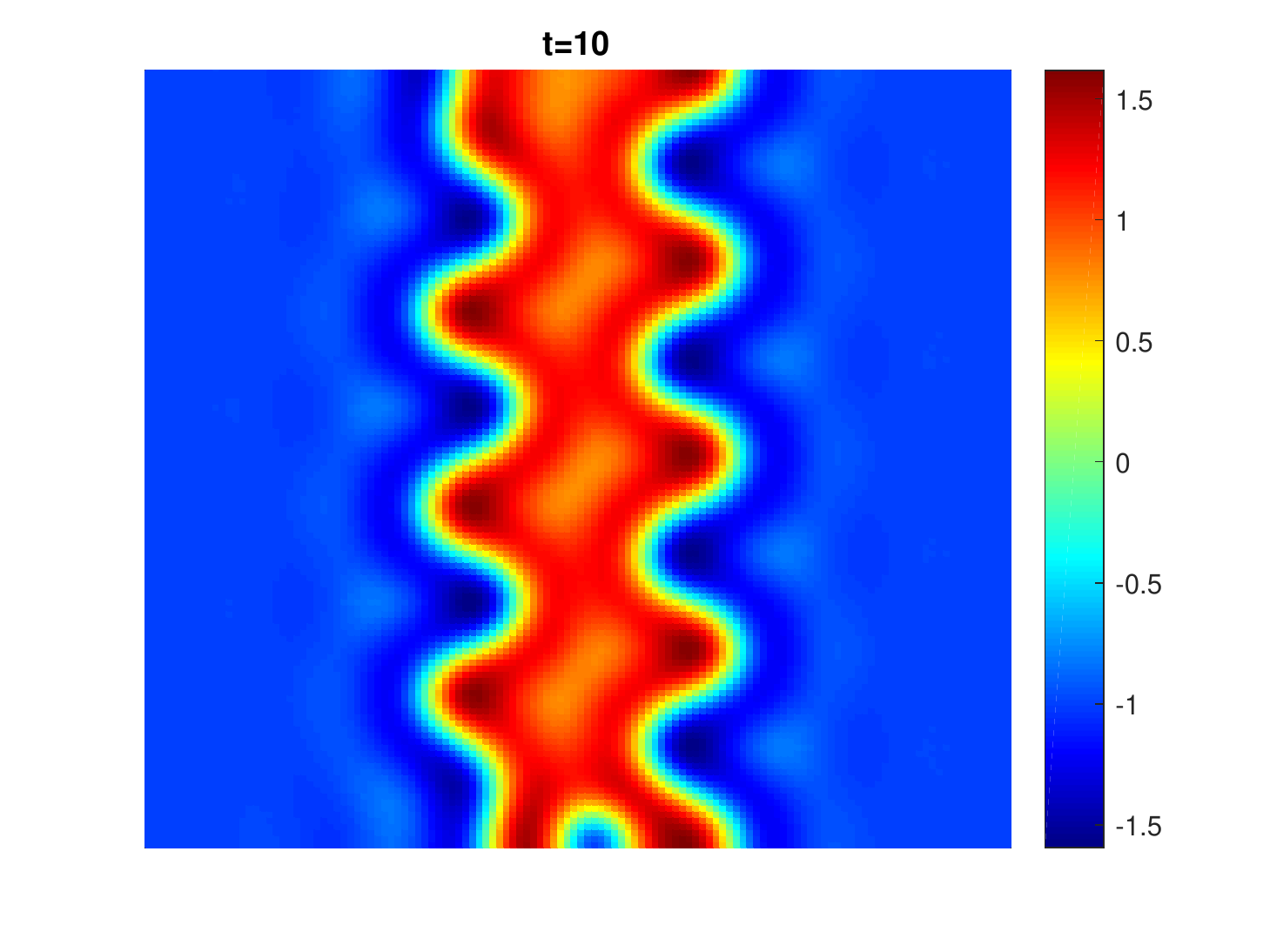}}
 \subfigure{\includegraphics[width=0.325\textwidth]{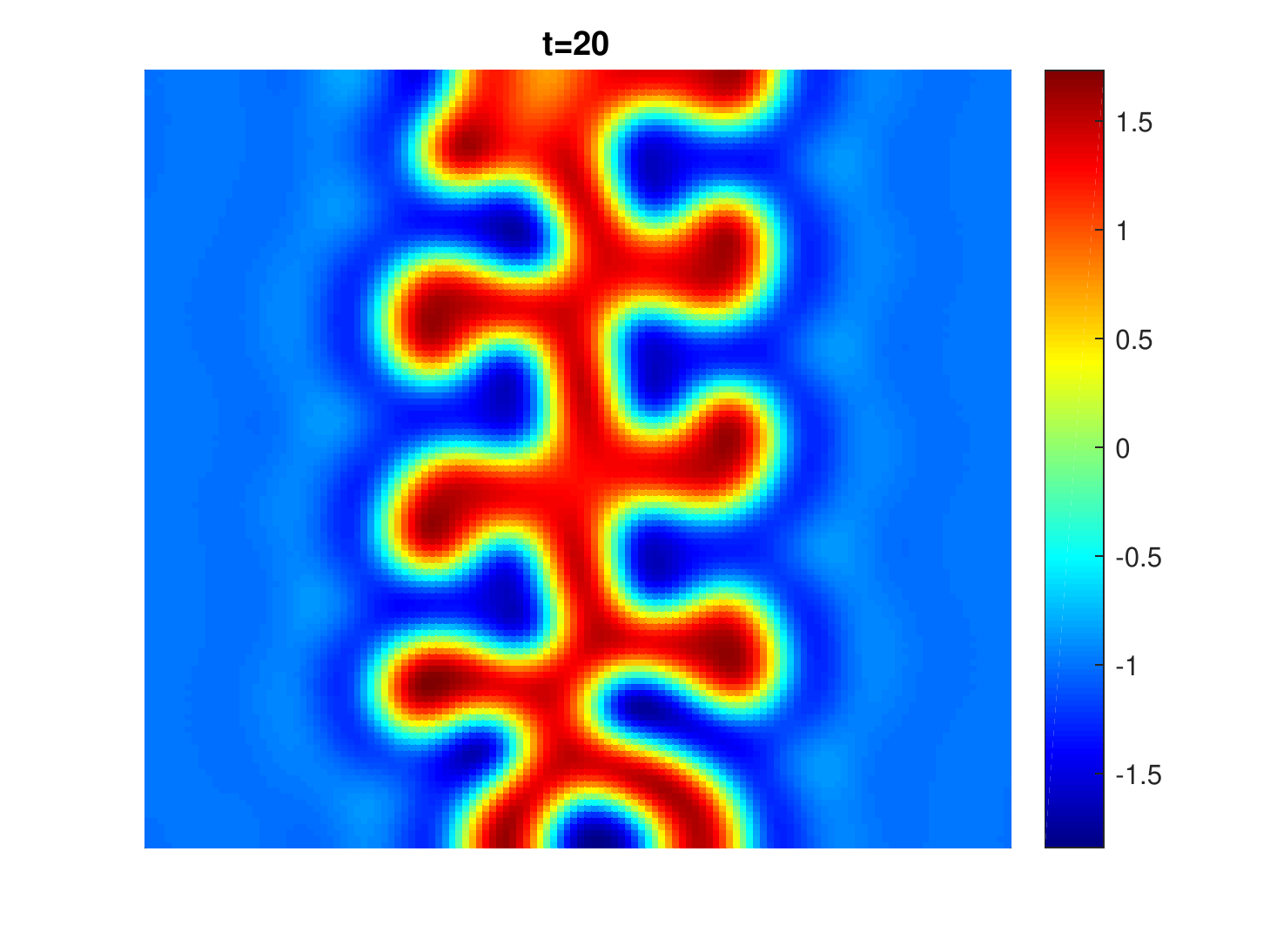}}
 \subfigure{\includegraphics[width=0.325\textwidth]{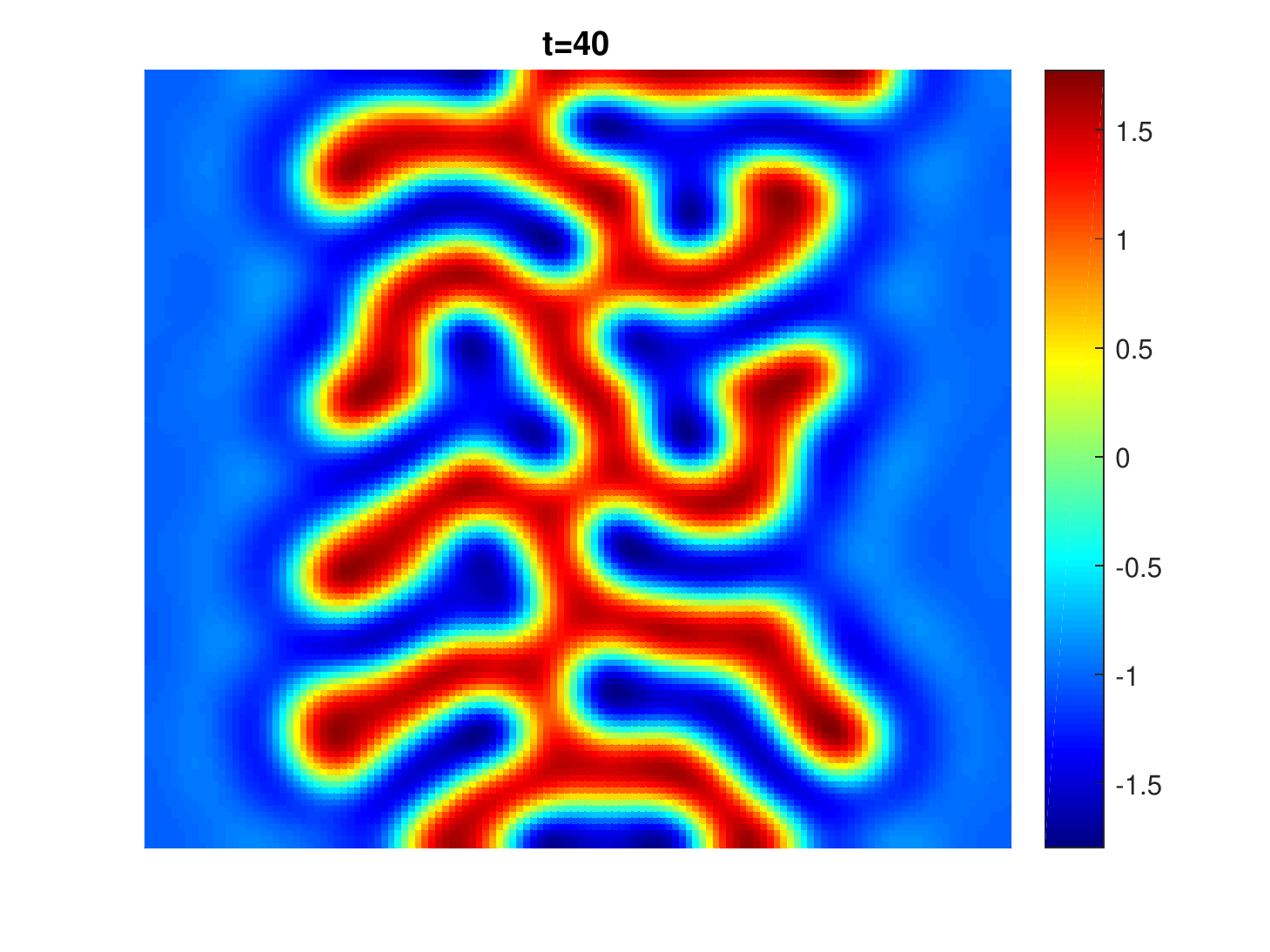}}
 \subfigure{\includegraphics[width=0.325\textwidth]{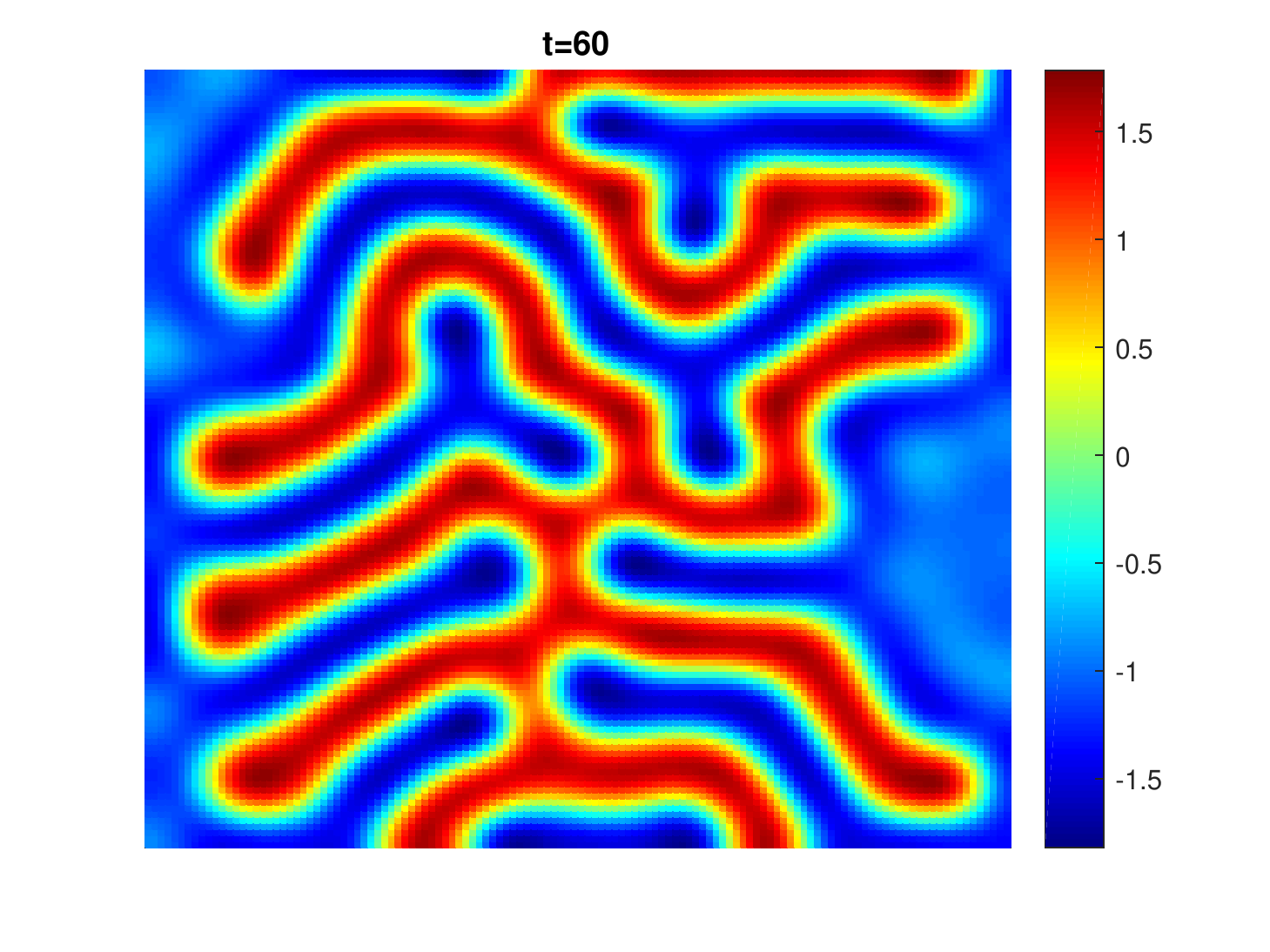}}
  \caption{ Evolution of patterns. % at (a) t=0; (b) t=5; (c) t=10; (d) t=20; (e) t=40; (f) t=60.
  } \label{PatBifur}
 \end{figure}

 \begin{figure}
 \centering
 \subfigure{\includegraphics[width=0.49\textwidth]{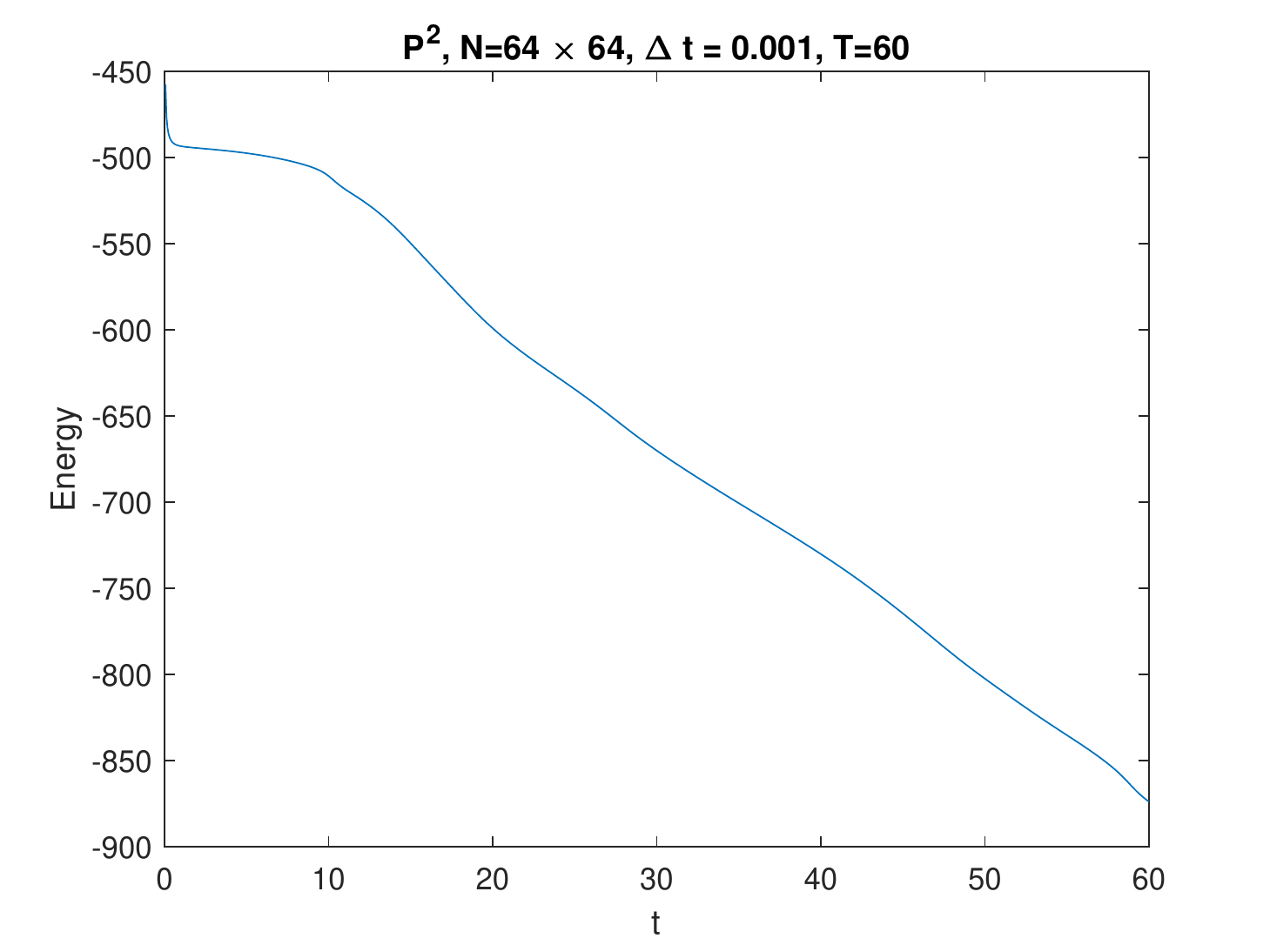}}
 \caption{ Energy evolution dissipation.
 } \label{BifEng}
 \end{figure}

\end{example}

\begin{example}\label{Ex2dPatt2} (Rolls and Hexagons) In this example, we test the formation and evolution of patterns that arise in the Rayleigh-B\'{e}nard convection by simulating with the Swift--Hohenberg equation (\ref{SH}) on rectangular domain $\Omega=[0,100]^2$,  subject to random initial data and periodic boundary conditions.  We apply scheme  (\ref{FPDGFull+}) based on $P^2$ polynomials using mesh $128 \times 128$ and time step size $\Delta t =0.01$.  Model parameters will be specified  below for different cases, and these choices of parameters have been used in \cite{PCC14, DA17}.
%We observed the energy evolution for several time steps for $t\in[0,40]$ as shown in Figure \ref{2du12eng2}, from which we found that the DG scheme is unconditionally stable regardless $\Delta t$.
%We consider the Swift-Hohenberg equation following \cite{PCG14} on a rectangular domain $\Omega=[0,100]^2$ with random initial solution and periodic boundary conditions.

\noindent\textbf{Test case 1.} (Rolls)
The numerical solutions with parameters  $\varepsilon=0.3, \ g=0$ are shown in Figure \ref{PatBifur2},  from which we see periodic rolls for different times. We observe that the pattern evolves approaching the steady-state after $t>60$, as also evidenced by the energy evolution plot in Figure \ref{BifEng2}.
%
% \begin{figure}
% \centering
% \subfigure{\includegraphics[width=0.49\textwidth]{figure/time2d.eps}}
% \subfigure{\includegraphics[width=0.49\textwidth]{figure/time2d2.eps}}
% \caption{ Energy evolution for serval time steps using (a) the new DG scheme (\ref{FPDGFull}), (b) the iterative DG scheme in .
% } \label{2du12eng}
% \end{figure}

 \begin{figure}
 \centering
% \subfigure{\includegraphics[width=0.325\textwidth]{figure/bifpat1.eps}}
% \subfigure{\includegraphics[width=0.325\textwidth]{figure/bifpat2.eps}}
% \subfigure{\includegraphics[width=0.325\textwidth]{figure/bifpat3.eps}}
% \subfigure{\includegraphics[width=0.325\textwidth]{figure/bifpat4.eps}}
% \subfigure{\includegraphics[width=0.325\textwidth]{figure/bifpat5.eps}}
% \subfigure{\includegraphics[width=0.325\textwidth]{figure/bifpat6.eps}}
 \subfigure{\includegraphics[width=0.325\textwidth]{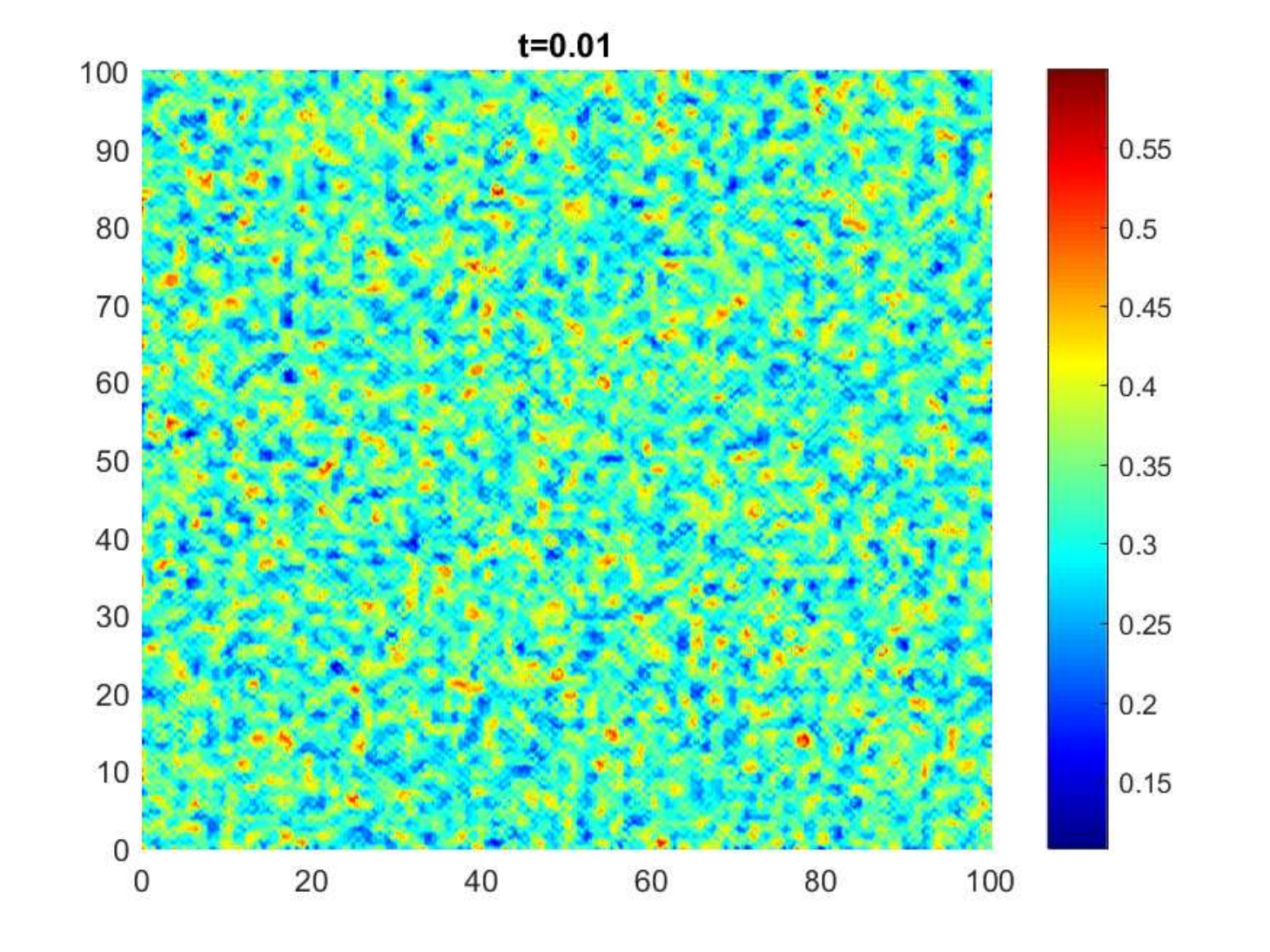}}
 \subfigure{\includegraphics[width=0.325\textwidth]{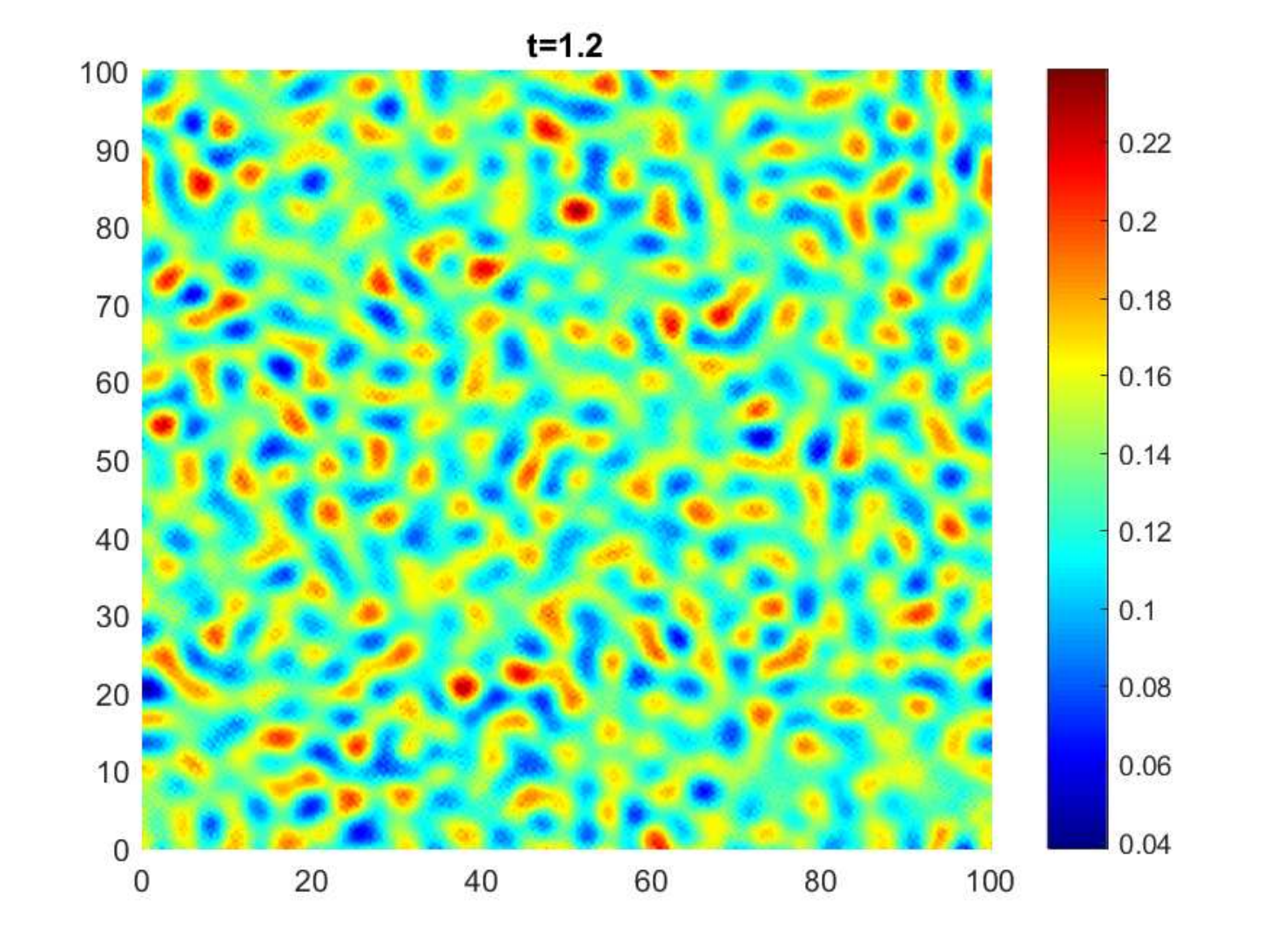}}
 \subfigure{\includegraphics[width=0.325\textwidth]{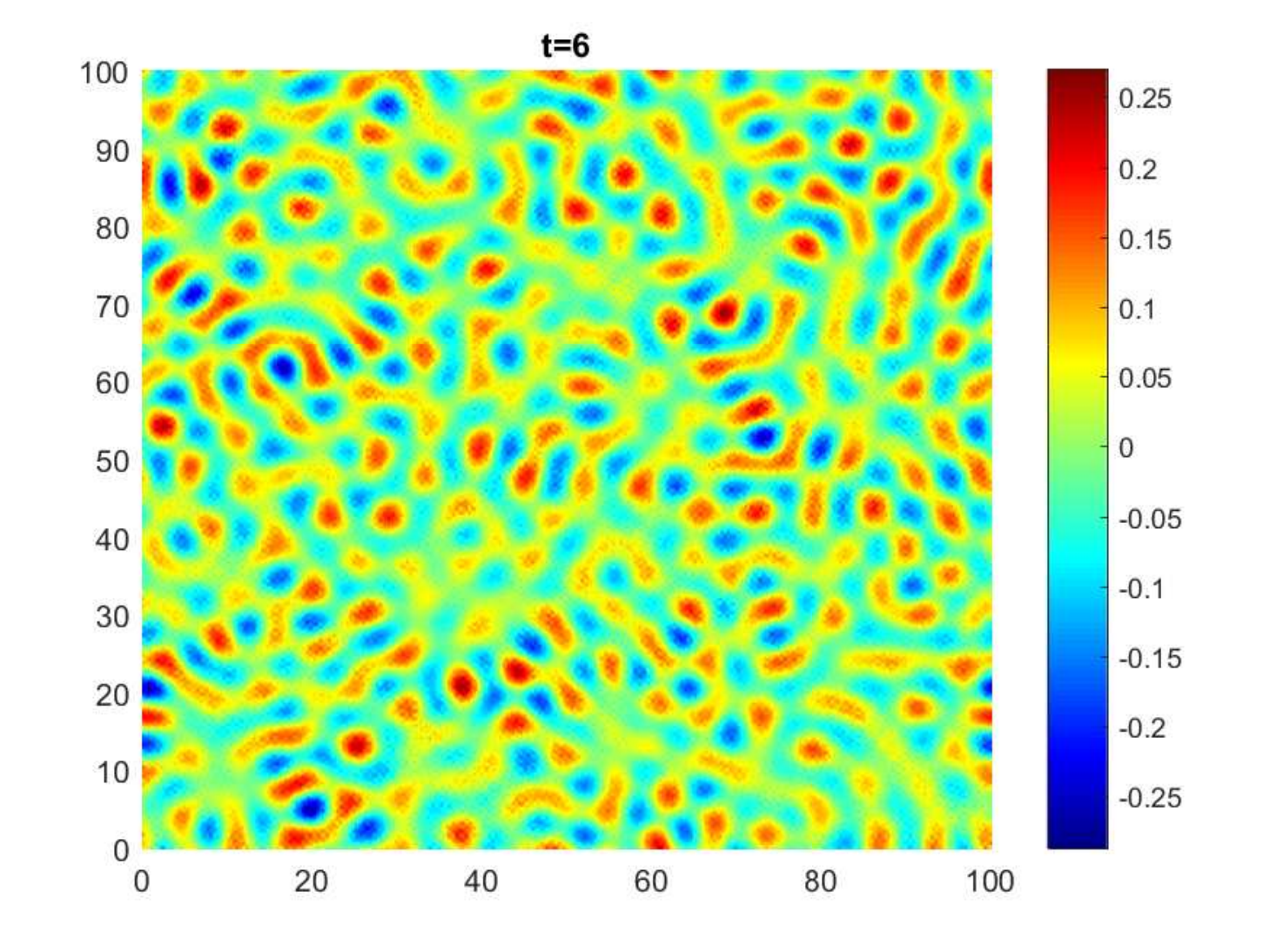}}
 \subfigure{\includegraphics[width=0.325\textwidth]{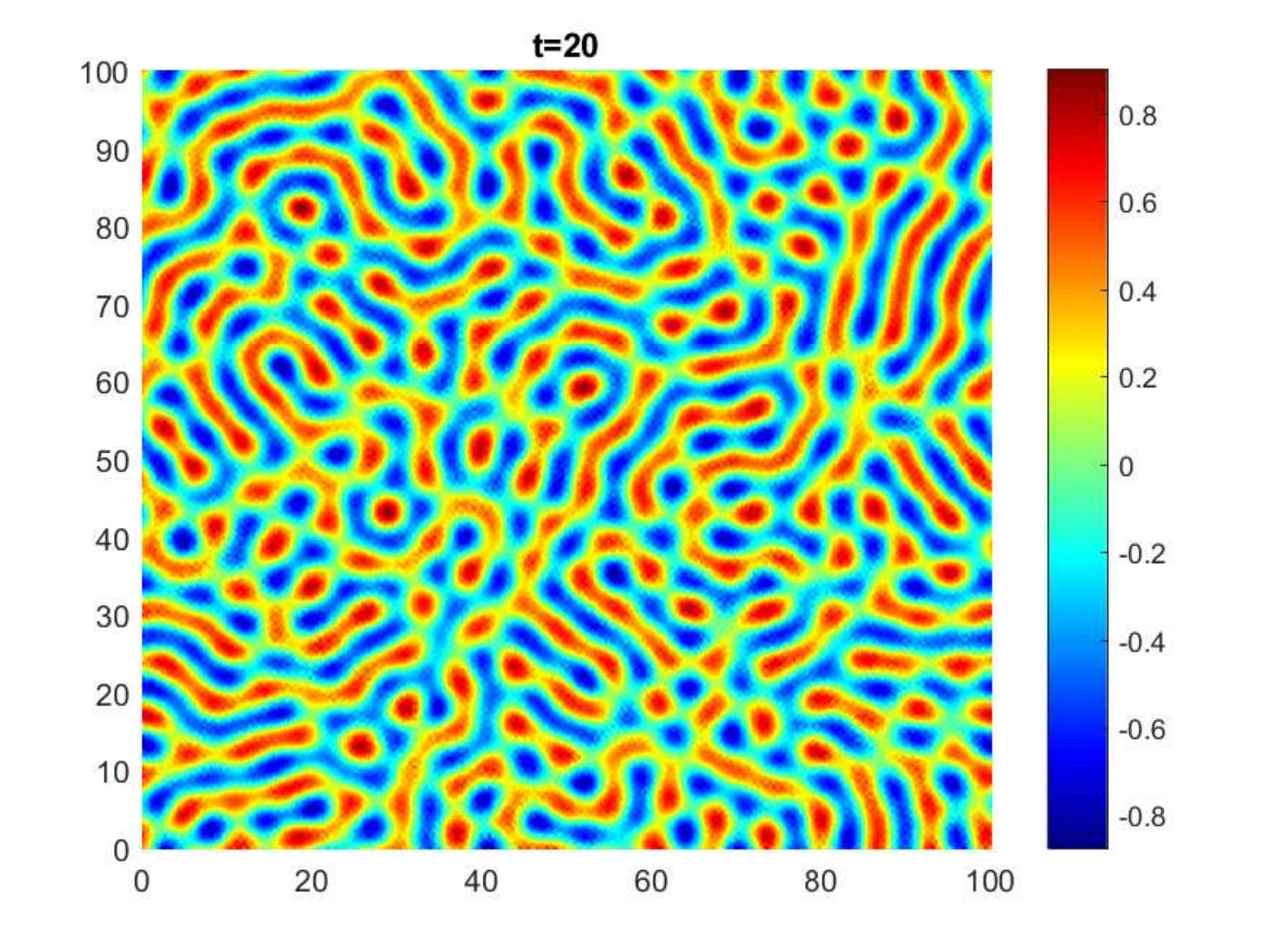}}
 \subfigure{\includegraphics[width=0.325\textwidth]{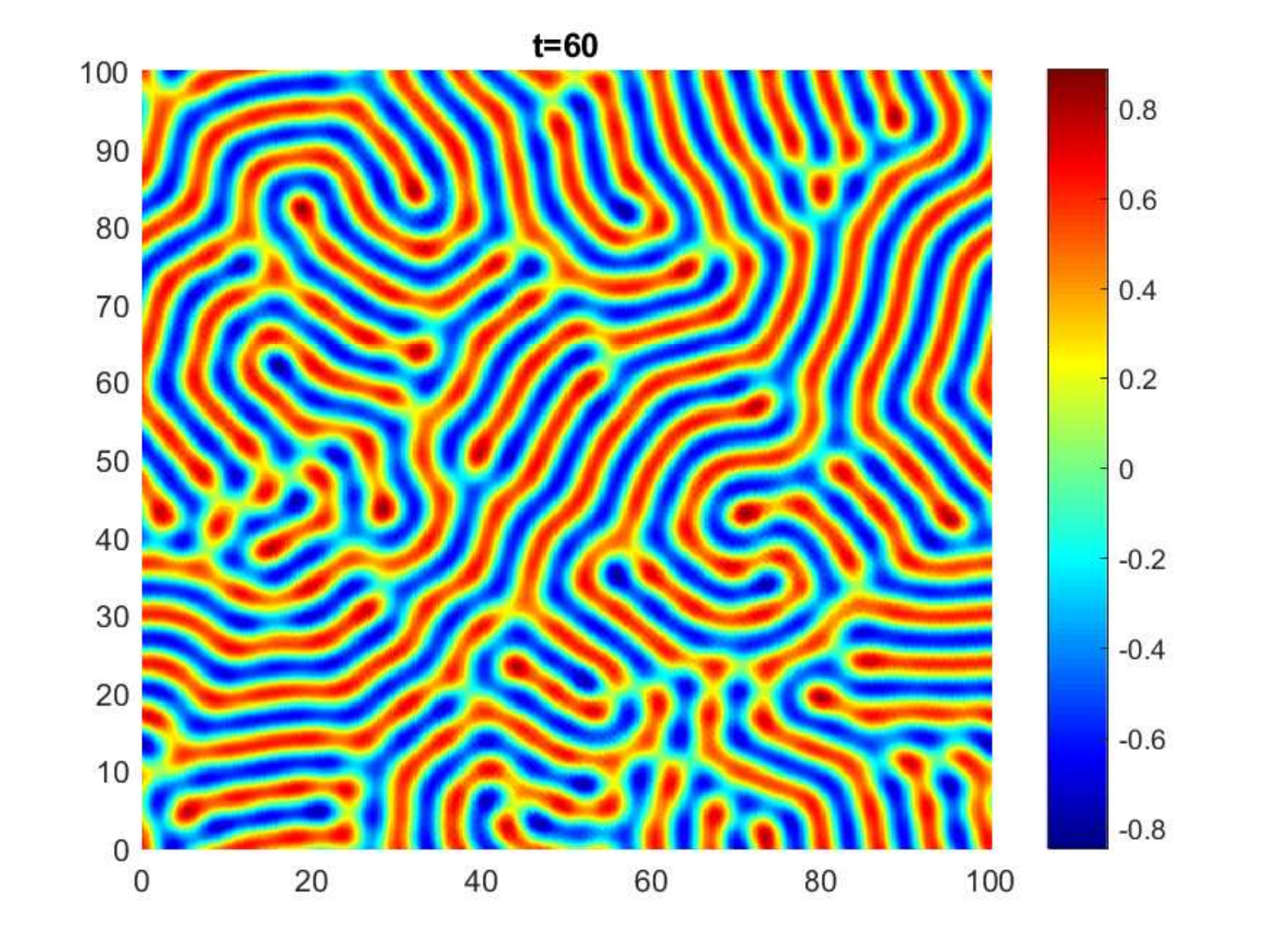}}
 \subfigure{\includegraphics[width=0.325\textwidth]{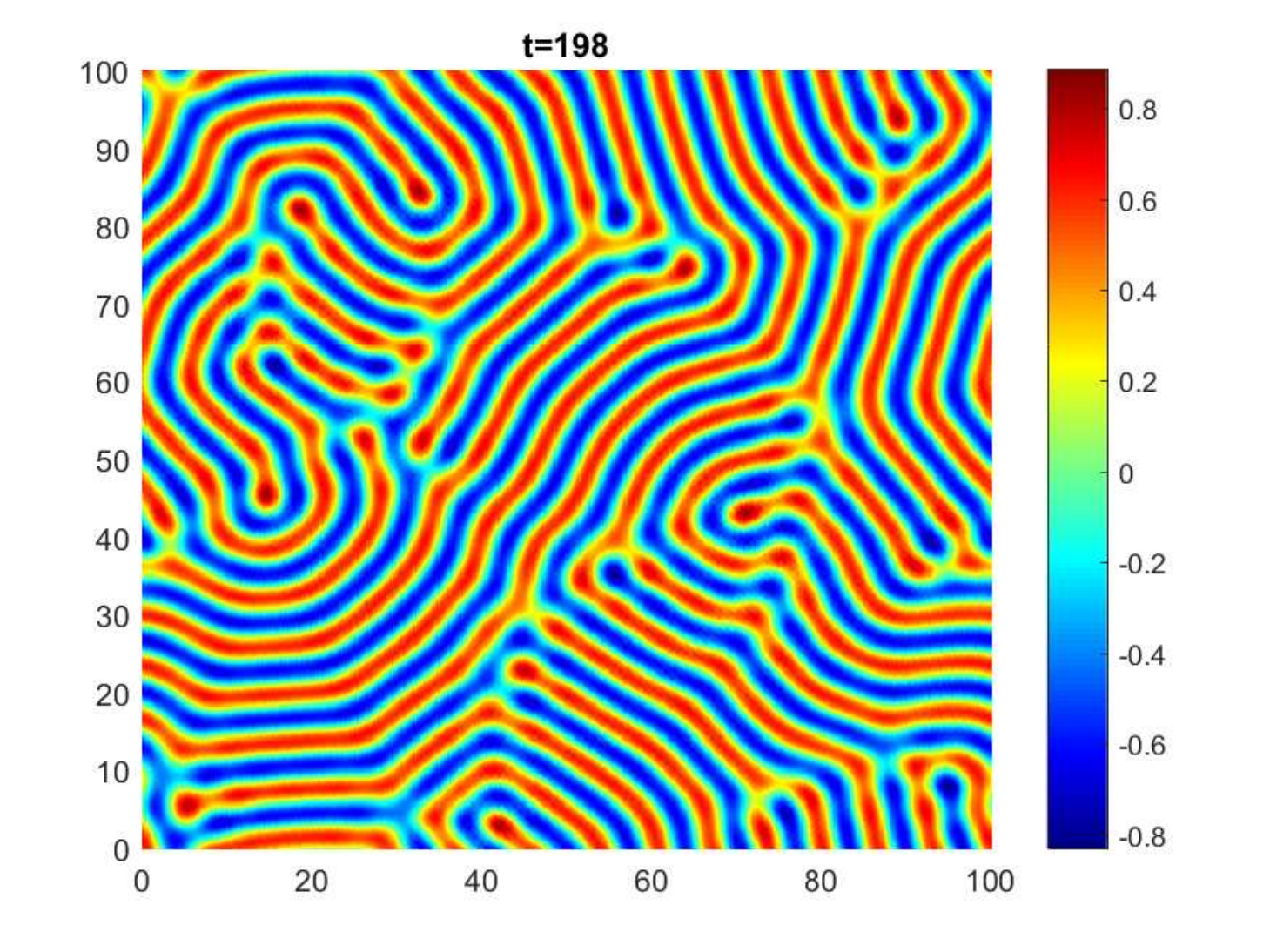}}
  \caption{ Evolution of periodic rolls.
  } \label{PatBifur2}
 \end{figure}

 \begin{figure}
 \centering
 %\subfigure{\includegraphics[width=0.49\textwidth]{figure/BifEng2.eps}}
 \subfigure{\includegraphics[width=0.49\textwidth]{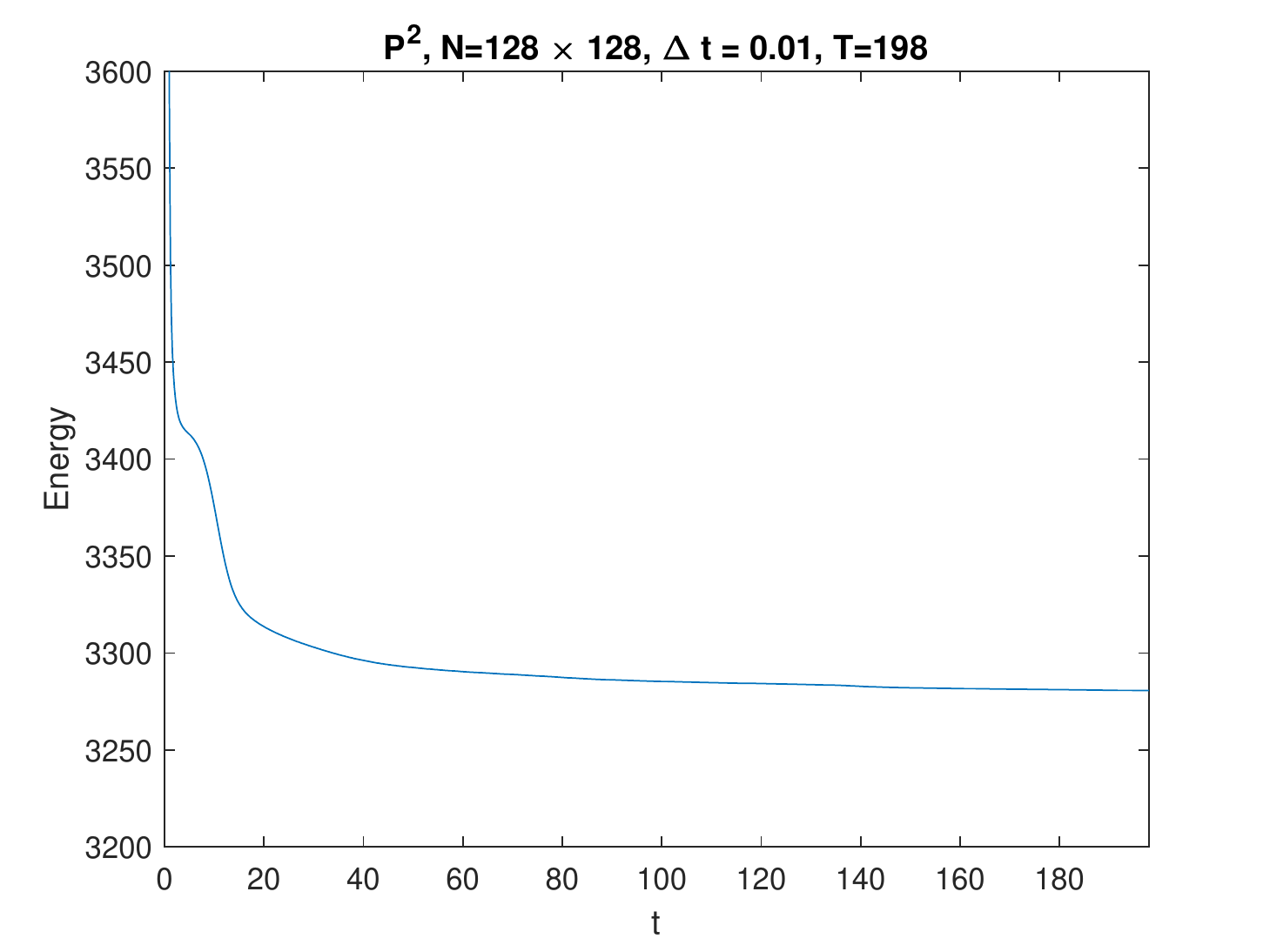}}
 \caption{ Energy evolution dissipation.
 } \label{BifEng2}
 \end{figure}

\noindent\textbf{Test case 2.} (Hexagons) The numerical solutions with $\varepsilon=0.1, \ g=1.0$ are reported in Figure \ref{PatBifur3}, while the snapshots from $t=0$ to $t=198$ reveal vividly the formation and evolution of the hexagonal pattern. The pattern evolution looks slow in the beginning, similar to that of rolls as shown in Figure \ref{PatBifur2}.  However, we observe that at a certain point, before $t=20$ in this case, lines break up giving way to single droplets that take hexagonal symmetry, as also observed  in \cite{PCC14, DA17}. A stable hexagonal pattern is taking its shape  after $t\geq 40$, and the steady state is approached. The energy evolution in Figure \ref{BifEng3} clearly confirms this.
%\tr{which also display hexagonal instablity at different time intervals \cite{PCC14}. We can see that the hexagonal pattern in Figure \ref{PatBifur3} evolves quite similarly to that of rolls as shown in Figure \ref{PatBifur2} from $t=0$ to $t=10$, then at $t=20$ lines broke up giving way to single droplets that take hexagonal symmetry, similar observation can also be found in \cite{PCC14, DA17}.}
% Furthermore, the energy evolution shown in Figure \ref{BifEng3} confirms that our numerical algorithm can well preserve the energy dissipation property (\ref{engdis}).
%A stable hexagonal pattern is taking its shape  after $t\geq 40$,  and the steady state is approached. The energy evolution in Figure \ref{BifEng3} clearly confirms this.

  \begin{figure}
 \centering
% \subfigure{\includegraphics[width=0.325\textwidth]{figure/hexpat1.eps}}
% \subfigure{\includegraphics[width=0.325\textwidth]{figure/hexpat2.eps}}
% \subfigure{\includegraphics[width=0.325\textwidth]{figure/hexpat3.eps}}
% \subfigure{\includegraphics[width=0.325\textwidth]{figure/hexpat4.eps}}
% \subfigure{\includegraphics[width=0.325\textwidth]{figure/hexpat5.eps}}
% \subfigure{\includegraphics[width=0.325\textwidth]{figure/hexpat6.eps}}
 \subfigure{\includegraphics[width=0.325\textwidth]{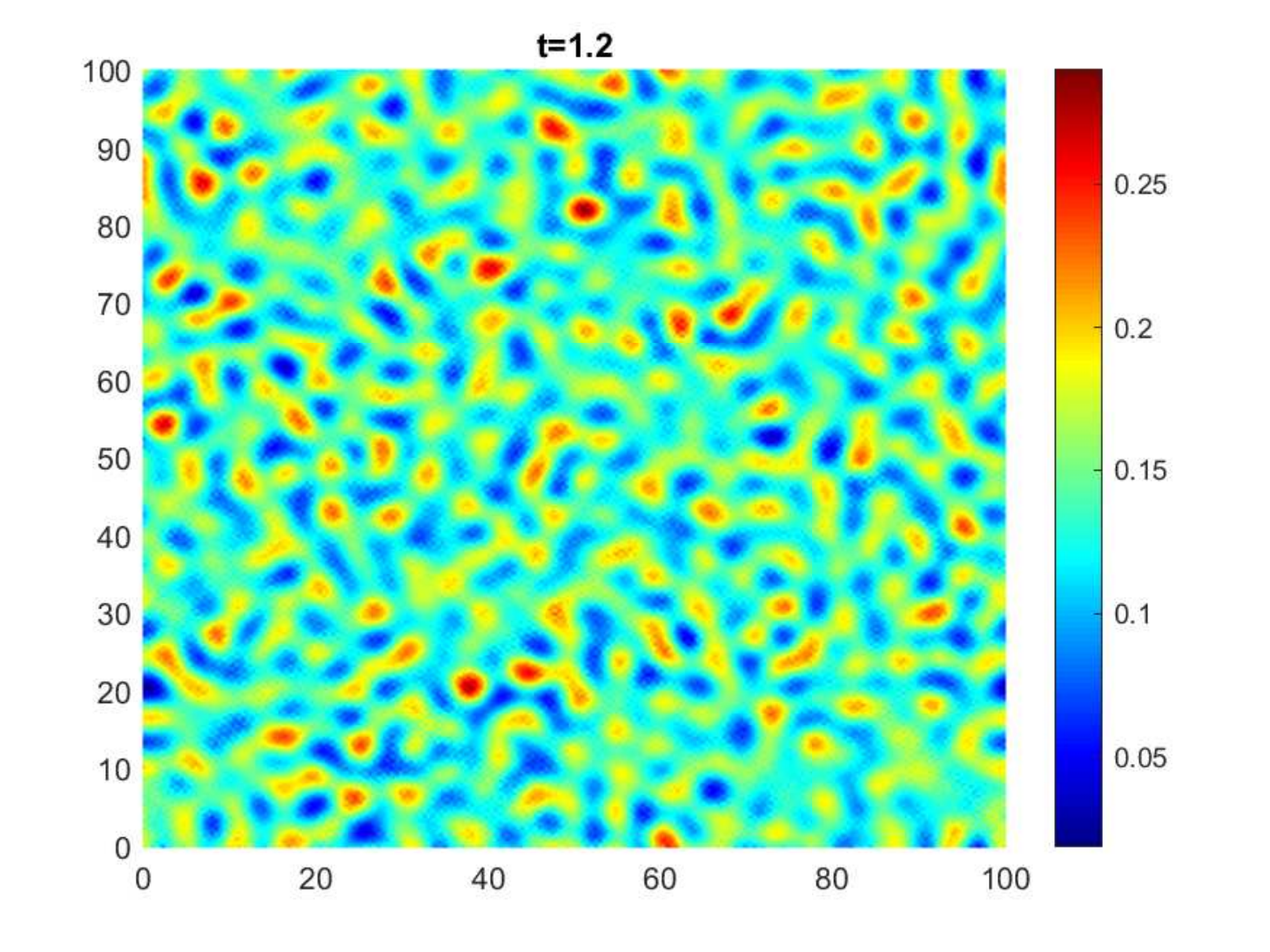}}
 \subfigure{\includegraphics[width=0.325\textwidth]{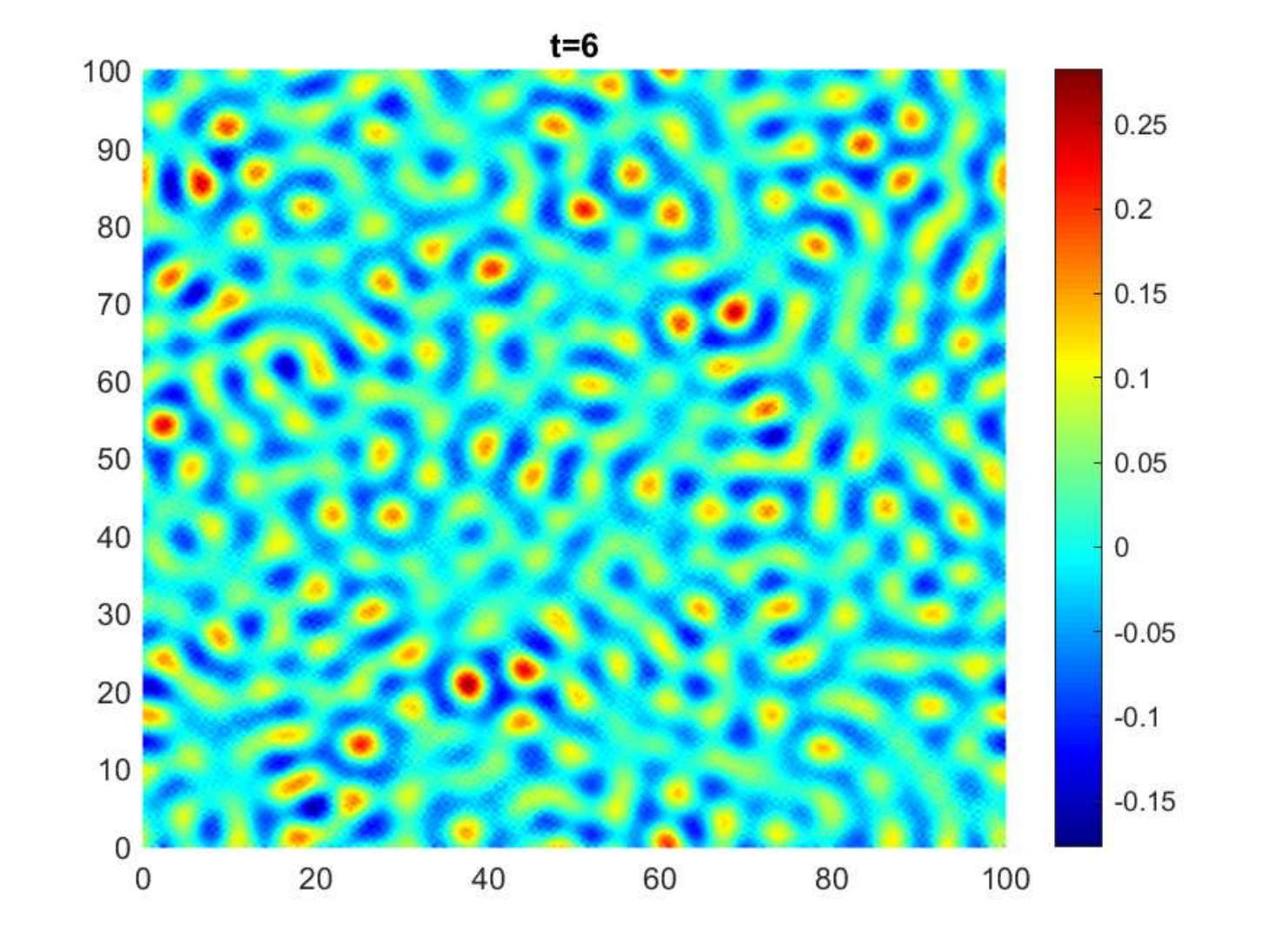}}
 \subfigure{\includegraphics[width=0.325\textwidth]{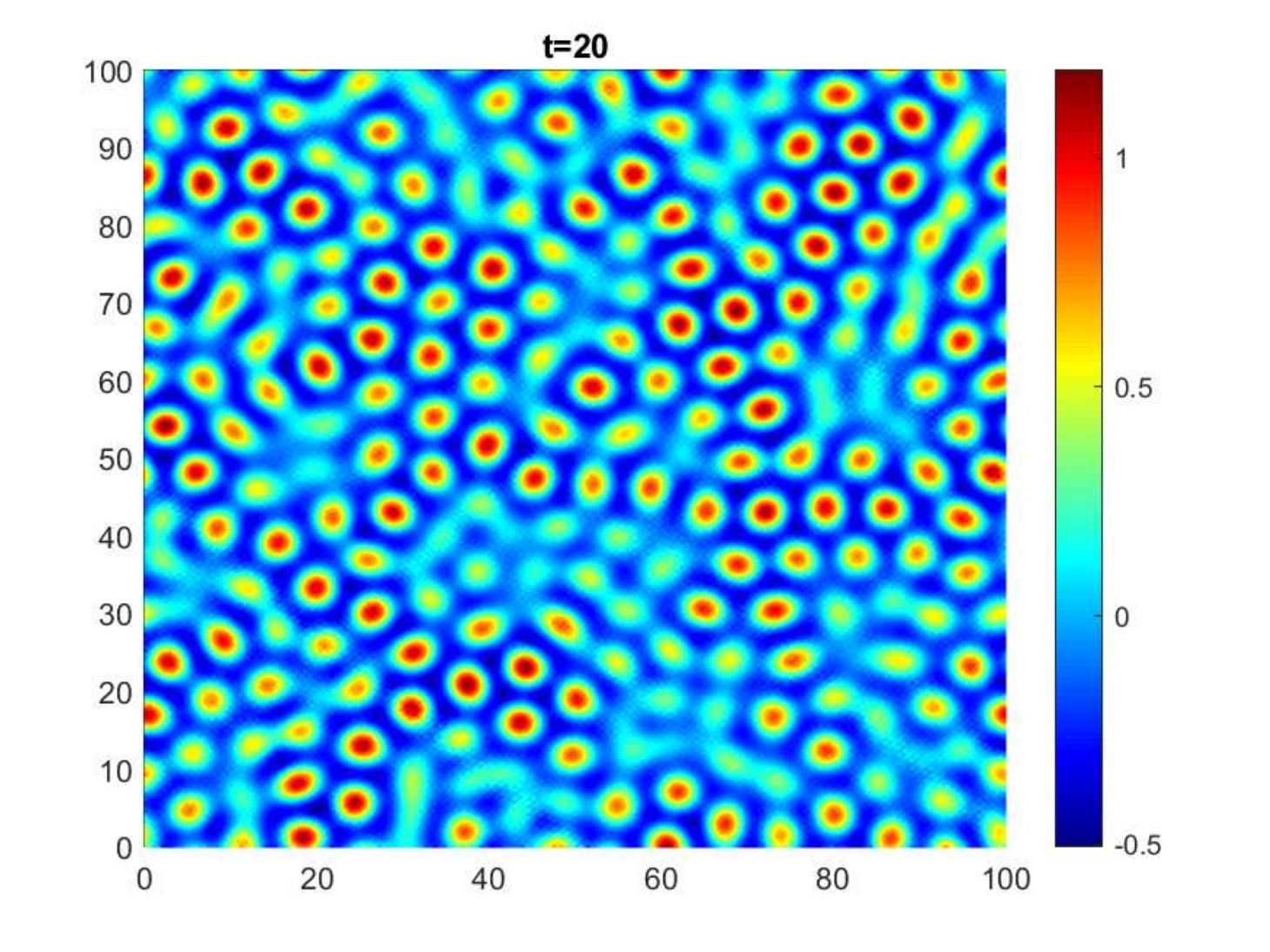}}
 \subfigure{\includegraphics[width=0.325\textwidth]{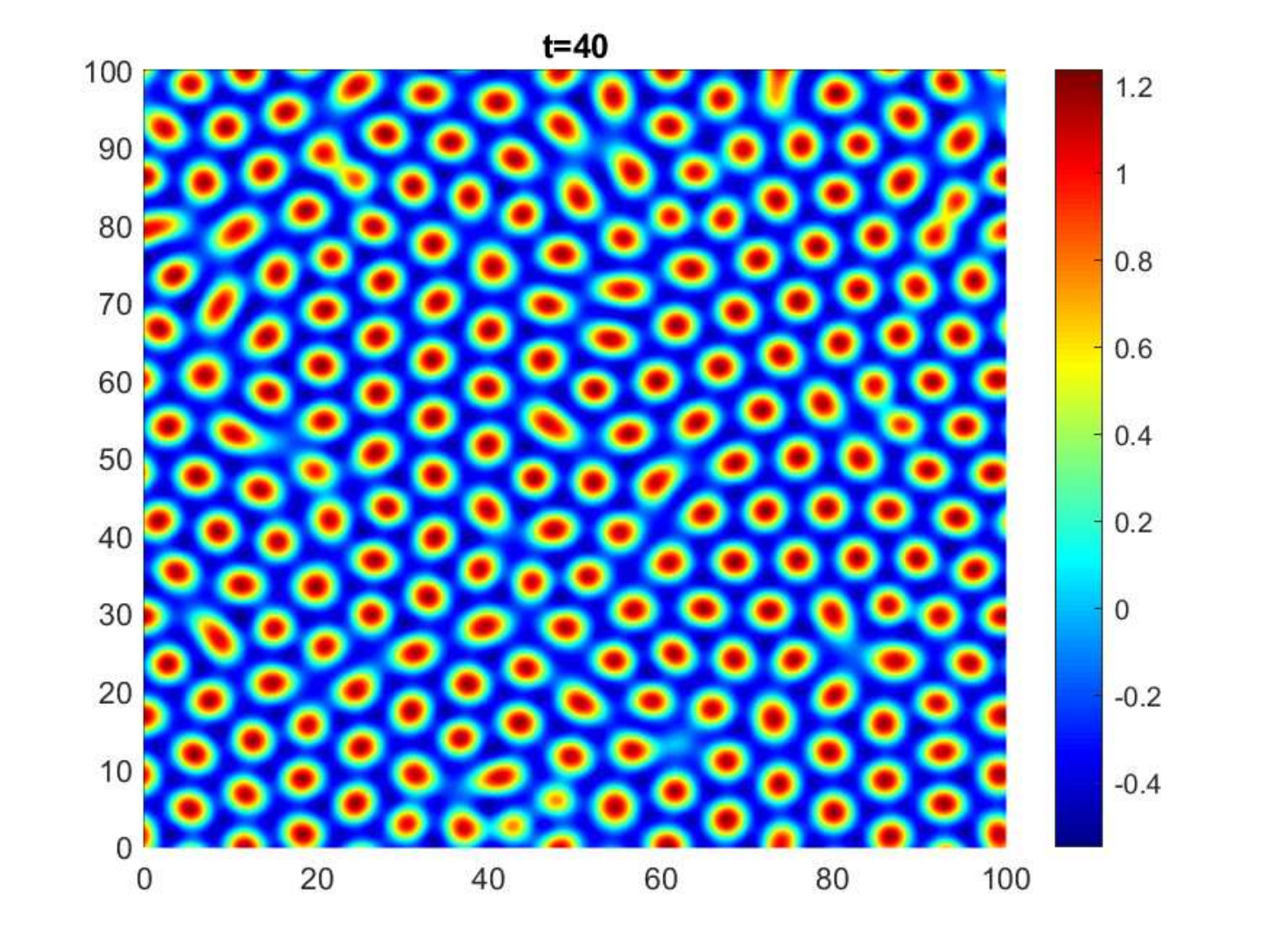}}
 \subfigure{\includegraphics[width=0.325\textwidth]{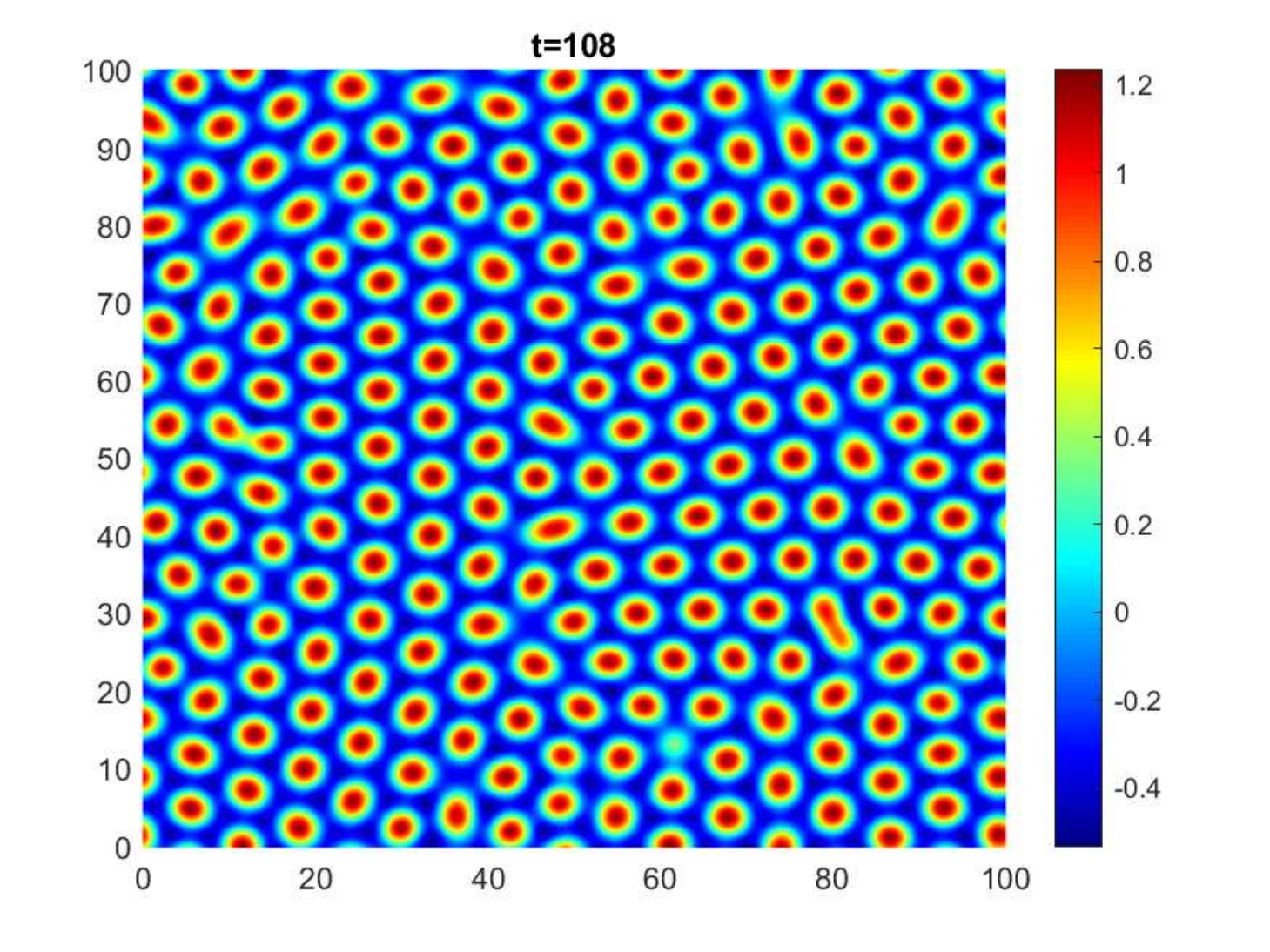}}
 \subfigure{\includegraphics[width=0.325\textwidth]{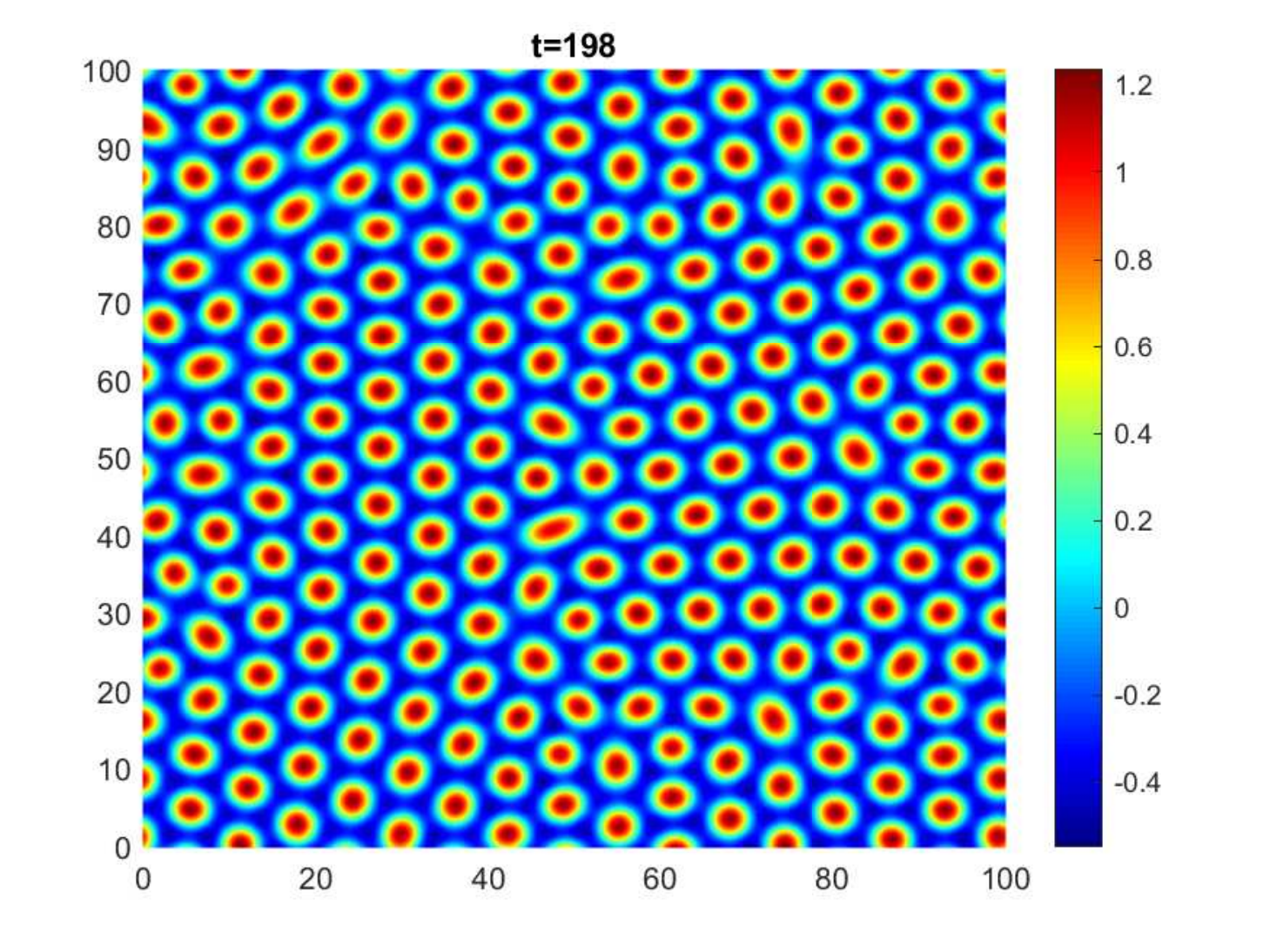}}
  \caption{ Evolution of hexagonal patterns. % at (a) t=0; (b) t=5; (c) t=10; (d) t=20; (e) t=40; (f) t=60.
  } \label{PatBifur3}
 \end{figure}

 \begin{figure}
 \centering
% \subfigure{\includegraphics[width=0.49\textwidth]{figure/BifEng3.eps}}
 \subfigure{\includegraphics[width=0.49\textwidth]{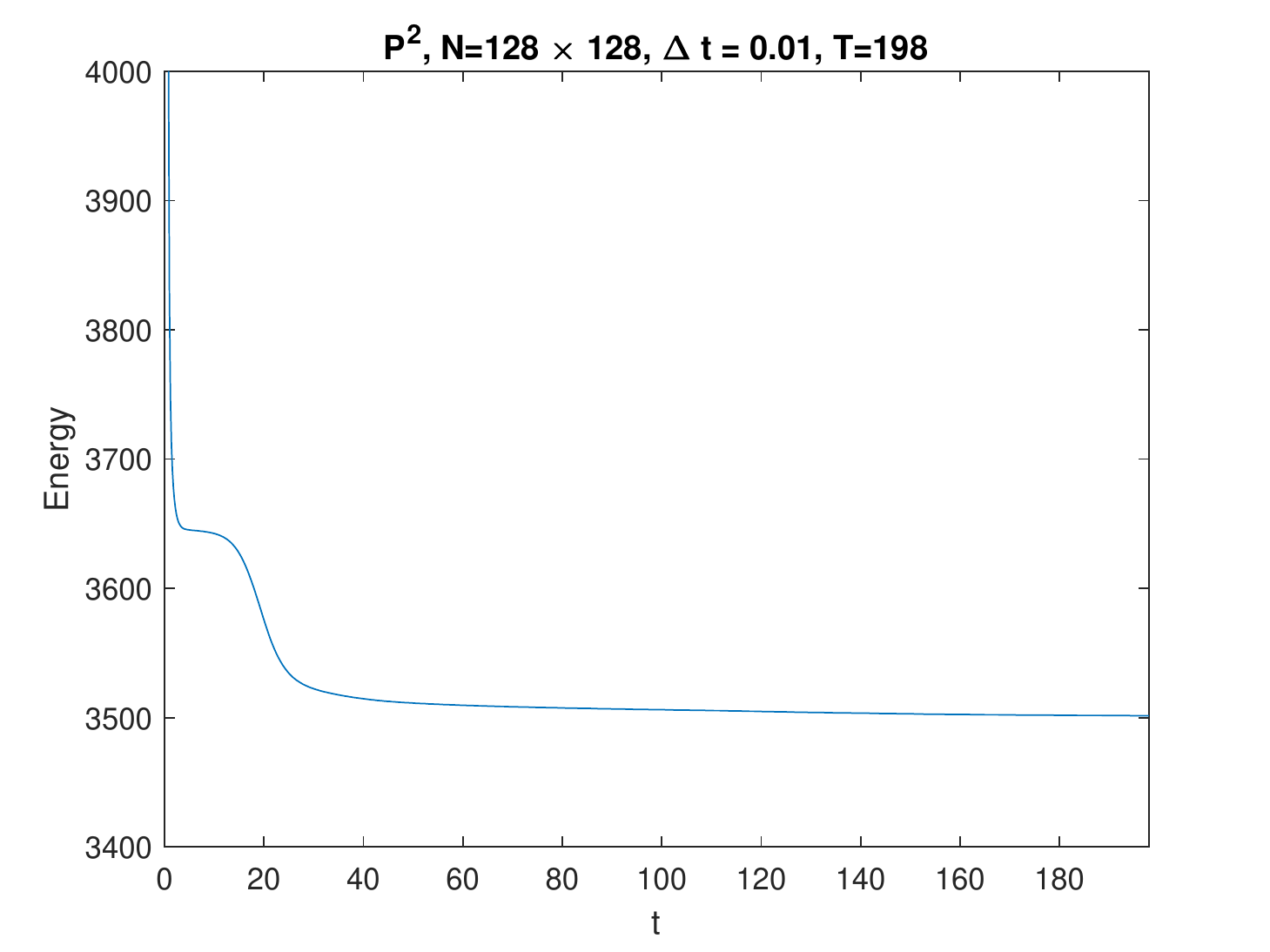}}
 \caption{ Energy evolution dissipation.
 } \label{BifEng3}
 \end{figure}

\end{example}

\begin{example}\label{varmethods} This example is to compare the numerical performance of three different time discretization techniques when applied to our mixed DG method (see also \cite{LY18} for details in its semi-discrete formulation), including \\
(i) %the second order fully discrete DG method (\ref{FPDGFull+}), i.e.,
 the second order IEQ-DG scheme  (\ref{FPDGFull+}); \\
(ii) %the fully discrete DG method in \cite{LY18}, namely, the fully discrete
the DG scheme (\ref{FPDGFullNon}), which was  introduced in  \cite{LY18};  and \\
(iii) the second order time discretization in \cite{GN12}, for which one  finds  $(u^{n}_h, q_h^{n}) \in V_h  \times V_h$ such that
\begin{subequations}\label{FPDGFullNon2}
\begin{align}
   \left(  \frac{u_h^{n+1} - u_h^n}{\Delta t}, \phi \right) = & - A(q_h^{n+1/2},\phi)- \left( \frac{1}{2}\left( \Phi'(u_h^{n+1})+\Phi'(u_h^{n}) \right) - \frac{(u_h^{n+1} - u_h^n)^2}{12}\Phi'''(u_h^{n}),\phi\right)\\
    (q_h^{n}, \psi) = & A(u_h^{n},\psi),
\end{align}
\end{subequations}
for all $\phi, \ \psi \in V_h$.

Though all three satisfy certain energy dissipation law,  (ii) and (iii) have to be solved by appropriate iterative techniques. We recall that  for the SH equation (\ref{SH}),
$$
\Phi(u)=-\frac{\epsilon}{2}u^2-\frac{g}{3}u^3+\frac{u^4}{4}.
$$
The iterative scheme used in \cite{LY18} for (\ref{FPDGFullNon}) is
the following
\bq\label{FPDGFull1DNon}
\ba
     \left( \frac{u_h^{n+1,l+1}-u_h^n}{\Delta t}, \phi \right) & +
     \frac{1}{2}A(q_h^{n+1,l+1},\phi)=
    -\frac{1}{2}A(q_h^{n},\phi)\\
    & -\left( G_1(u_h^{n+1,l},u_h^n)u_h^{n+1,l+1}+G_2(u_h^{n+1,l},u_h^n), \phi \right),\\
   \frac{1}{2}A(u_h^{n+1,l+1},\psi) & -\frac{1}{2}(q_h^{n+1,l+1}, \psi) = 0,% -\frac{1}{2}A(u_h^{n},\psi)+\frac{1}{2}(q_h^{n}, \psi),
\ea
\eq
where $G_1(u_h^{n+1,0},u_h^n)=G_1(u_h^n,u_h^n)$, the iteration stops as $\|u_h^{n+1,l}-u_h^{n+1,l-1}\| < \eta$ for certain $l=L \ (L \geq 1)$ and some tolerance $\eta>0$.  Then we update by setting $ u_h^{n+1} =u_h^{n+1,L}.$ Here
%, we follow iterative scheme in \cite{LY18} by rewriting the nonlinear term in (\ref{FPDGFullNon}a) as
%$$
%\frac{\Phi(u_h^{n+1})-\Phi(u_h^n)}{u_h^{n+1}-u_h^n}=G_1(u_h^{n+1},u_h^n)u_h^{n+1}+G_2(u_h^{n+1},u_h^n),
%$$
%where
\bqs
\ba
G_1(w,v)=& -\frac{\varepsilon}{2}-\frac{g}{3}(w+v)+\frac{1}{4}(w^2+wv+v^2), \\
G_2(w,v)=& -\frac{\varepsilon}{2}v-\frac{g}{3}v^2+\frac{1}{4}v^3.
\ea
\eqs
%with which we obtain
%Then the last iteration gives the sought solution on the new time stage and we define
%\bqs%\label{IterSoln}
%u_h^{n+1} =u_h^{n+1,L}.
%\eqs
The scheme (\ref{FPDGFullNon2}) can still be solved iteratively by (\ref{FPDGFull1DNon}) if one can decompose the nonlinear term in (\ref{FPDGFullNon2}a) as
$$
\frac{1}{2}\left( \Phi'(u_h^{n+1})+\Phi'(u_h^{n}) \right) - \frac{(u_h^{n+1} - u_h^n)^2}{12}\Phi'''(u_h^{n})=G_1(u_h^{n+1}, u_h^n)u_h^{n+1}+G_2(u_h^{n+1}, u_h^n).
$$
We consider two decompositions:
\bq\label{iter1}
\ba
G_1(w,v)=& \frac{1}{2}(-\varepsilon-gw-w^2)-\frac{1}{12}(w-2v)(6v-2g),\\
G_2(w,v)= & \frac{1}{2}\Phi'(v)-\frac{1}{12}v^2(6v-2g),
\ea
\eq
and
\bq\label{iter2}
\ba
G_1(w,v)=& \frac{1}{2}(-\varepsilon-gw-3w^2)-\frac{1}{12}(w-2v)(6v-2g),\\
G_2(w,v)= & \frac{1}{2}\Phi'(v)-\frac{1}{12}v^2(6v-2g)-w^3,
\ea
\eq
\noindent\textbf{Test case 1.}  We consider the SH equation (\ref{SH}) with a source
$$
f(x,y, t)=- \varepsilon v -gv^2+ v^3,
$$
where $v=e^{-49t/64}\sin(x/2)\sin(y/2)$, and parameters $\varepsilon=0.025, g=0.05$. For initial data (\ref{initex12}), and
boundary condition $\partial_\nu u = \partial_\nu \Delta u = 0, \ (x,y) \in \partial \Omega$, where domain is $\Omega=[-2\pi, 2\pi]^2$,
we have an exact solution given by
\bqs
u(x,y,t) = e^{-49t/64}\sin(x/4)\sin(y/4), \quad (x, y) \in \Omega.
\eqs
We test  schemes (i)-(iii) based on $P^2$ polynomials with
$$
\frac{1}{2}\left(f(\cdot, t^{n+1}, \phi)+f(\cdot, t^{n}, \phi)\right),
$$
added to the right hand side of (\ref{FPDGFull+}c), (\ref{FPDGFullNon}a) and (\ref{FPDGFullNon2}a), respectively. For (ii) and (iii), we take the tolerance $\eta=10^{-12}$.

We compute the numerical solution at $T=2$ with mesh size $32 \times 32$ and time steps $\Delta t=2^{-m}$ for $2\leq m\leq 5$, the $L^2, L^\infty$ errors and orders of convergence in time are shown in Table \ref{timeacc2}, and these results show that schemes (i)-(iii) are all of second order accuracy in time.

\begin{table}[!htbp]\tabcolsep0.03in
\caption{$L^2, L^\infty$ errors and EOC at $T = 2$ with time step $\Delta t$.}
\begin{tabular}[c]{||c|c|c|c|c|c|c|c|c||}
\hline
\multirow{2}{*}{Method} &  \multirow{2}{*}{ } & $\Delta t=2^{-2}$ & \multicolumn{2}{|c|}{$\Delta t=2^{-3}$} & \multicolumn{2}{|c|}{$\Delta t=2^{-4}$} & \multicolumn{2}{|c||}{$\Delta t=2^{-5}$}  \\
\cline{3-9}
& & error & error & order & error & order & error & order\\
\hline
\multirow{2}{*}{(i)}  & $\|u-u_h\|_{L^2}$ &  1.58904e-02 & 3.28568e-03 & 2.27 & 7.79139e-04 & 2.08 & 1.88606e-04 & 2.05  \\
\cline{2-9}
 & $\|u-u_h\|_{L^\infty}$  & 2.86144e-03 & 6.04098e-04 & 2.24 & 1.59953e-04 & 1.92 & 4.25000e-05 & 1.91  \\
\hline
\hline
\multirow{2}{*}{(ii)}  & $\|u-u_h\|_{L^2}$ & 1.21293e-02 & 3.01853e-03 & 2.01 & 7.62040e-04 & 1.99 & 1.89204e-04 & 2.01  \\
\cline{2-9}
 & $\|u-u_h\|_{L^\infty}$  & 2.75613e-03 & 6.82343e-04 & 2.01 & 1.72912e-04 & 1.98 & 4.27627e-05 & 2.02  \\
 \hline
 \hline
\multirow{2}{*}{(iii)} & $\|u-u_h\|_{L^2}$ & 1.15070e-02 & 2.94199e-03 & 1.97 & 7.52614e-04 & 1.97 & 1.88020e-04 & 2.00  \\
\cline{2-9}
 & $\|u-u_h\|_{L^\infty}$  & 2.57787e-03 & 6.60569e-04 & 1.96 & 1.70270e-04 & 1.96 & 4.24274e-05 & 2.00  \\
\hline
\end{tabular}\label{timeacc2}
\end{table}

\noindent\textbf{Test case 2.} We attempt to recover the pattern observed in Example \ref{Ex2dPatt} at $T=10$ by using schemes (i)-(iii) with meshes $64\times 64$ and time steps $\Delta t=2^{-m}$ for $2\leq m\leq 7$. For scheme (i), we take $B=10^4$ since  we observe that  larger $B$ can give better approximation, such effect seems visible only for larger $\Delta t$.  For both (ii) and (iii),  we take the tolerance $\eta=10^{-10}$, and
use the same preconditioner and solver as for (i).

For schemes  (i)-(iii) both the maximum number of iterations at each time step  and the total CPU time from $t=0$ to $t=T$ are presented in Table \ref{tab2daccNeu32};  the CPU time is highlighted when the expected pattern is observed. The results show that scheme (i) uses the least number of iterations and the least CPU time to obtain the expected pattern, and hence the most efficient one among three schemes.

\begin{table}[!htbp]\tabcolsep0.03in
\caption{Iterations and CPU time in seconds at $T=10$ with meshes $64 \times 64$.}
\begin{tabular}[c]{||c|c|c|c|c|c|c|c||}
\hline
Method & $\Delta t$ & $2^{-2}$ & $2^{-3}$ & $2^{-4}$ & $2^{-5}$ & $2^{-6}$ & $2^{-7}$\\
\hline
\hline
\multirow{2}{*}{(i)}   & Iterations &  1 & 1 & 1 & 1 & 1 & 1 \\
\cline{2-8}
 & CPU time  & 842 & 1128 & 1557 & 2320 & \textbf{3717} & \textbf{6042} \\
\hline
\hline
\multirow{2}{*}{(ii)}  & Iterations & 20 & 13 & 10 & 8 & 7 & 6\\
\cline{2-8}
 & CPU time  & 7874 & 7024 & 7774 & \textbf{9818} & \textbf{13652} & \textbf{20542} \\
 \hline
\hline
\multirow{2}{*}{(iii)-(\ref{iter1})}  & Iterations & 18 & 12 & 9  & 8 & 7  &  6  \\
\cline{2-8}
 & CPU time   & 6478 & 6229 & 7296 & \textbf{9587} & \textbf{13497}  &  \textbf{20383} \\
\hline
\hline
\multirow{2}{*}{(iii)-(\ref{iter2})}  & Iterations & 13 & 11  & 9 &  7 & 7 & 6 \\
\cline{2-8}
 & CPU time   &  5748 & 6223 & 7595  & \textbf{9673} & \textbf{13526} & \textbf{20483} \\
\hline
\end{tabular}\label{tab2daccNeu32}
\end{table}

\end{example}

\section{Concluding remarks}
The Swift--Hohenberg equation is a higher-order nonlinear partial differential equation endowed with a gradient flow structure.
%The DG methods for spatial discretization are one of the powerful numerical algorithms for solving PDEs.
%For fourth order gradient flows such as the the nonlinear Swift-Hohenberg equation, the time discretization also becomes an important issue.
We proposed fully discrete discontinuous Galerkin (DG) schemes that inherit the nonlinear stability relationship of the continuous equation irrespectively of the mesh and time step sizes.  The spatial discretization is  based on the mixed DG method  introduced by us in \cite{LY18},  and the temporal discretization is based on \emph{Invariant Energy Quadratization} (IEQ) approach introduced in \cite{Y16} for the nonlinear potential. Coupled with a proper projection, the resulting IEQ-DG algorithm is explicit without resorting to any iteration method,  and proven to be unconditionally energy stable.  We present several numerical examples to assess the performance of the schemes in terms of accuracy and energy stability. The numerical results on two dimensional pattern formation problems indicate that the method is able to deliver comparable patterns of high accuracy.
%We solved the SH models using the present method in one and two dimensional cases. Numerical experimental are designed to demonstrate the efficiency of the new numerical procedure. The computational results confirm the capability of the schemes proposed in the current investigation.

Pattern formation is the result of self-organization systems and there are many examples of this phenomenon, in spite of the different mechanisms
that trigger and amplify the instability.  The present method should be applicable to a wide variety of processes and can be variationally improved if necessary.

\section*{Acknowledgments}
 This research was supported by the National Science Foundation under Grant DMS1812666 and by NSF Grant RNMS
(KI-Net) 1107291.

\bigskip

\bigskip


\begin{thebibliography}{10}
\bibitem{BGG11}
S. Badia, F. Guillen-Gonzalez, and J. V. Gutierrez-Santacreu.
 \newblock Finite element approximation of nematic liquid crystal flows using a saddle-point structure.
 \newblock {\em  J. Comput. Phys.}, 230: 1686--1706, 2011.

\bibitem{BHK07}
W. Bangerth, R. Hartmann and G. Kanschat.
\newblock deal.II--A general-purpose object-oriented finite element library.
\newblock {\em  ACM Trans. Math. Softw.}, 33(4):24/1--24/27, 2007.

\bibitem{BIS17}
B. Braaksma, G. Iooss and L. Stolovitch.
\newblock Proof of quasipatterns for the Swift--Hohenberg equation.
\newblock {\em Commun. Math. Phys.,} 353:37--67, 2017.

\bibitem{BPT01}
G. J. B. van den Berg, L. A.  Peletier and W. C. Troy.
\newblock Global branches of multi-bump periodic solutions of the Swift--Hohenberg equation.
\newblock {\em Arch. Rational Mech. Anal.,} 158:91--153, 2001.


%\bibitem{BD12}
%J. Burke and J. H. P. Dawes.
%\newblock Localized states in an extended Swift--Hohenberg equation.
%\newblock {\em SIAM J. Appl. Dyn. Syst.,} 11:261--284, 2012.

%\bibitem{BB95}
%J. W. Barrett and J. F. Blowey.
%\newblock An error bound for the finite element approximation of the Cahn-Hilliard equation with logarithmic free energy.
%\newblock {\em Numer. Math.} 72:1--20, 1995.

%\bibitem{BEMS07}
%E. Burman, A. Ern, I. Mozoleviski and B. Stamm.
%\newblock The symmetric discontinuous Galerkin method does not need stabilitzation in 1D for polynomial orders $p \geq 2$.
%\newblock {\em C. R. Acad. Sci. Paris, Ser. I} 345(10):599--602, 2007.
%
%\bibitem{CMZ16}
%J. Chen, L. C. McInnes and H. Zhang.
%\newblock Analysis and practical use of flexible BiCGStab.
%\newblock {\em Journal of Scientific Computing,}  68(2):803--825, 2016.

%\bibitem{CS08}
%Y. Cheng and C.-W. Shu.
%\newblock A discontinuous Galerkin finite element method for time dependent partial differential equations with higher order derivatives.
%\newblock {\em Math. Comp.,} 77:699--730, 2008.

%\bibitem{C78}
%P. G. Ciarlet.
%\newblock The Finite Element Method for Elliptic Problems.
%\newblock {\em Studies in Mathematics and its Applications} Vol. 4, 1978.



%\bibitem{CR74}
%P. G. Ciarlet and P.-A. Raviart.
%\newblock A mixed finite element method for the biharmonic equation.
%\newblock {\em Mathematical Aspects of Finite Elements in Partial Differential Equations.} Academic Press, New York, 125--145, 1974.

\bibitem{CP02}
C. I. Christov and  J. Pontes.
\newblock Numerical scheme for Swift--Hohenberg equation with strict implementation of Lyapunov functional.
\newblock {\em Math. Comput. Modelling},  35:87--99, 2002.

\bibitem{CPWV97}
C. I. Christov, J. Pontes, D. Walgraef and  M. G. Velarde.
\newblock Implicit time splitting for fourth-order parabolic equations.
\newblock {\em Comput. Methods Appl. Mech. Engrg.,} 148:209--224, 1997.

\bibitem{CG09}
M. Cross and H. Greenside.
\newblock {\sl Pattern Formation and Dynamics in Nonequilibrium Systems.}
\newblock {\em Cambridge University Press}, New York,  2009.

%\bibitem{CG09}
%M. Cross and H. Greenside.
%\newblock Pattern formation and dynamics in nonequilibrium systems.
%\newblock {\em Cambridge University Press,} New York. 2009.

%\bibitem{DP06}
%P. Danumjaya and A. K. Pani.
%\newblock Numerical methods for the extended Fisher-Kolmogorov (EFK) equation.
%\newblock {\em International Journal of Numerical Analysis and Modeling} 3(2):186--210. 2006.

\bibitem{DS88}
G.  Dee and W. Saarloos.
\newblock Bistable systems with propagating fronts leading to pattern formation.
\newblock {\em Phys. Rev. Lett.,} 60:2641--2644, 1988.

\bibitem{DA17}
M. Dehghan and M. Abbaszadeh.
\newblock The meshless local collocation method for solving multi-dimensional Cahn--Hilliard, Swift--Hohenberg and phase field crystal equations.
\newblock {\em Eng.  Anal.  Bound. Elem.,} 78:49--64, 2017.

%\bibitem{DS09}
%B. Dong, C.-W. Shu.
%\newblock Analysis of a Local Discontinuous Galerkin Method for Linear Time-Dependent Fourth-Order Problems.
%\newblock {\em SIAM J. Numer. Anal.} 47(5):3240--3268. 2009.

%\bibitem{EGHLMT02}
%G. Engel, K. Garikipati, T. J. R. Hughes, M. G. Larson, L. Mazzei, R. L. Taylor
%\newblock Continuous/discontinuous finite element approximations of fourth-order elliptic problems in structural and continuum mechanics with applications to thin beams and plates, and strain gradient elasticity.
%\newblock {\em Methods Appl. Mech. Engrg.} 191:3669--3750, 2002.

%\bibitem{EVG92}
%K. R. Elder, J. Vi\~{n}als and M. Grant.
%\newblock Ordering dynamics in the two-dimensional stochastic Swift-Hohenberg equation.
%\newblock {\em Phys. Rev. Lett.,} 68(20):3024--3027, 1992.

\bibitem{EMS10}
N. M. Evstigneev, N. A. Magnitskii and S. V. Sidorov.
\newblock  Nonlinear dynamics of laminar-turbulent transition in three dimensional Rayleigh-B\'{e}nard convection.
\newblock {\em Commun. Nonlinear Sci. Numer. Simulat.,} 15:2851--2859, 2010.


\bibitem{FK99}
P. C. Fife and M. Kowalczyk.
\newblock A class of pattern-forming models.
\newblock {\em J. Nonlinear Sci.}, 9:641--669, 1999.

%\bibitem{F13}
%D. Fishelov.
%\newblock Semi-discrete time-dependent fourth-order problems on an interval: error estimate.
%\newblock {\em Numerical Mathematics and Advanced Applications - ENUMATH 2013} 133--142, 2013.

\bibitem{GN12}
H. Gomez and X. Nogueira.
\newblock A new space-time discretization for the Swift--Hohenberg equation that strictly respects the Lyapunov functional.
\newblock {\em Commun. Nonlinear Sci. Numer. Simulat.,} 17(12):4930--4946, 2012.

\bibitem{GT13}
F. Guillen-Gonzalez and G. Tierra.
\newblock On linear schemes for a Cahn-Hilliard diffuse interface model.
\newblock {\em  J. Comput. Phys.}, 234:140--171, 2013.


%\bibitem{L06}
%J. Li.
%\newblock Optimal convergence analysis of mixed finite element methods for fourth-order elliptic and parabolic problems.
%\newblock {\em Numer. Methods Partial Differential Equations.} 22(4):884--896, 2006.

%\bibitem{L15}
%H. Liu.
%\newblock Optimal error estimates of the Direct Discontinuous Galerkin method for convection-diffusion equations.
%\newblock {\em Math. Comp.} 84:2263--2295, 2015.

\bibitem{HW07}
Jan~S. Hesthaven and Tim Warburton.
\newblock {\em Nodal Discontinuous Galerkin Methods: Algorithms, Analysis, and
  Applications}.
\newblock Springer Publishing Company, Incorporated, 1st edition, 2007.

\bibitem{Ho06}
R.B. Hoyle.
\newblock  {\sl Pattern Formation: An introduction to methods.}
\newblock {\em Cambridge University Press}, Cambridge, 2006.


\bibitem{HA05}
A. Hutt and F. M. Atay.
\newblock  Analysis of nonlocal neural fields for both general and gamma-distributed connectivities.
\newblock {\em Physica D.,}  203:30--54, 2005.



\bibitem{KS12}
N. A. Kudryashov and D. I. Sinelshchikov.
\newblock  Exact solutions of the Swift--Hohenberg equation with dispersion.
\newblock {\em Commun. Nonlinear Sci. Numer. Simulat.,} 17:26--34, 2012.

\bibitem{Le17}
H. G. Lee.
\newblock A semi-analytical Fourier spectral method for the Swift--Hohenberg equation.
\newblock {\em Computers \& Mathematics with Applications (CMA),} 74(8):1885--1896, 2017.

%\bibitem{LFB02}
%B. Li, G. Fairweather and B. Bialecki.
%\newblock Discrete-time orthogonal spline collocation methods for vibration problems.
%\newblock {\em SIAM J. Numer. Anal.,} 39(6):2045--2065, 2002.

\bibitem{LY18}
H. Liu and P. Yin.
\newblock A mixed discontinuous Galerkin method without interior penalty for time-dependent fourth order problems.
\newblock {\em J. Sci. Comput.,} 77:467--501, 2018.

%\bibitem{LHLY17}
%H. Liu, Y. Huang, W. Lu, N. Yi.
%\newblock On accuracy of the mass preserving DG method to multi-dimensional Schr\"{o}dinger equations.
%\newblock {\em IMA J. Numer. Anal.} submitted.

%\bibitem{LSAC09}
%D. J. B. Lloyd, B. Sandstede, D. Avitabile and A. R. Champneys.
%\newblock Localized Hexagon Patterns of the Planar Swift-Hohenberg Equation.
%\newblock  {\em SIAM J. Appl. Dyn. Syst.}  47(3):1049--1100, 2009.

%\bibitem{LW16}
%H. Liu and Z. Wang.
%\newblock An entropy satisfying discontinuous Galerkin method for nonlinear Fokker-Planck equations.
%\newblock {\em J. Sci. Comput.} 68:1217--1240, 2016.

%\bibitem{LY09}
%H. ~Liu and J. ~Yan.
%\newblock The {D}irect {D}iscontinuous {G}alerkin ({D}{D}{G}) method for diffusion problems.
%\newblock  {\em SIAM Journal on Numerical Analysis.}  47(1):675--698, 2009.

%\bibitem{LY10}
%H. ~Liu and J. ~Yan.
%\newblock The {D}irect {D}iscontinuous {G}alerkin ({D}{D}{G}) method for diffusion with interface corrections.
%\newblock {\em Commun. Comput. Phys.},  8(3):541--564, 2010.

%\bibitem{M64}
%R. D. Mindlin.
%\newblock Micro-structure in linear elasticity.
%\newblock  {\em Archive for Rational Mechanics and Analysis}  16:51--78, 1964.

%\bibitem{MSW12}
%X. Meng, C. Shu and B. Wu.
%\newblock Superconvergence of the local discontinuous Galerkin method for linear fourth-order time-dependent problems in one space dimension.
%\newblock  {\em IMA Journal of Numerical Analysis}  32(4):1294--1328, 2012.

%\bibitem{N25}
%A. Nadai.
%\newblock Die elastischen Platten.
%\newblock  {\em Springer, Berlin} 1925.


\bibitem{PR04}
L. A. Peletier and V. Rottsch\"{a}fer.
\newblock Pattern selection of solutions of the Swift--Hohenberg equation.
\newblock  {\em Physica D: Nonlinear Phenomena,}  194(1):95--126, 2004.

\bibitem{PT95}
L. A. Peletier and W. C. Troy.
\newblock Spatial patterns described by the extended Fisher-Kolmogorov (EFK) equation: kinks.
 \newblock {\em Differ.  Integral  Equ.},  8:1279--1304, 1995.

\bibitem{PCC14}
S. S. P\'{e}rez-Moreno, S. R. Chavarr\'{i}a and G. R. Chavarr\'{i}a.
\newblock Numerical solution of the Swift--Hohenberg equation.
\newblock  {\em In: J. Klapp, A. Medina (eds).  Experimental and Computational Fluid Mechanics. Environmental Science and Engineering. Springer, Cham.,} 409--416, 2014.

%\bibitem{S93}
%Y. Saad.
%\newblock A flexible inner-outer preconditioned GMRES algorithm.
%\newblock  {\em SIAM J. Sci. Comput.,,}  14(2):461--469, 1993.

%\bibitem{PRC00}
%R. R. Rosa, J. Pontes, C. I. Christov, F. M. Ramos, C. Rodrigues Neto, E. L. Rempel and D. Walgraef.
%\newblock  Gradient pattern analysis of Swift-Hohenberg dynamics: phase disorder characterization.
%\newblock {\em Physica A,}  283:156--159, 2000.

\bibitem{Ri08}
B{\`e}atrice Rivi{\'e}re.
\newblock {\em Discontinuous {G}alerkin Methods for Solving Elliptic and
  Parabolic Equations}.
\newblock Society for Industrial and Applied Mathematics, 2008.



\bibitem{SEVDPC17}
A. F. Sarmiento, L. F. R. Espath, P. Vignal, L. Dalcin, M. Parsani and V. M. Calo.
\newblock An energy-stable generalized-$\alpha$ method for the Swift--Hohenberg equation.
\newblock {\em J. Comput. Appl. Math.,} 344:836--851, 2018.

\bibitem{Sh09}
Chi-Wang Shu.
\newblock Discontinuous {G}alerkin methods: General approach and stability.
\newblock {\em Numerical Solutions of Partial Differential Equations, S.
  Bertoluzza, S. Falletta, G. Russo and C.-W. Shu, Advanced Courses in
  Mathematics CRM Barcelona}, pages 149--201, 2009.
\newblock Birkhauser, Basel.

%\bibitem{SS98}
%K. Staliunas and V. J. Snchez-Morcillo.
%\newblock Dynamics of phase domains in the Swift-Hohenberg equation.
%\newblock {\em Physics Letters A,} 241:28--34, 1998.


\bibitem{SH77}
J. Swift and P. C. Hohenberg.
\newblock Hydrodynamic fluctuations at the convective instability.
\newblock  {\em Physical Review A,}  15:319--328, 1977.

\bibitem{SY18}
J. Shen, J. Xu  and X. Yang.
\newblock  The scalar auxiliary variable (SAV) approach for gradient flows.
\newblock  {\em J. Comput. Phys.}, 353:407--416, 2018.



\bibitem{TARGK13}
U. Thiele, A. J. Archer,  M. J. Robbins, H. Gomez, and  E. Knobloch.
\newblock Localized states in the conserved Swift--Hohenberg equation with cubic nonlinearity.
\newblock  {\em Physical Review E},  87:042915,  2013.



%\bibitem{T62}
%R. A. Toupin.
%\newblock Elastic materials with couple-stresses.
%\newblock {\em Archive for Rational Mechanics and Analysis} 11(1):385--414, 1962.

%\bibitem{WH03}
%T. ~Warburton, J. ~S. ~Hesthaven.
%\newblock On the constants in hp-finite element trace inequalities.
%\newblock {\em Comput. Methods Appl. Mech. Engin} 192:276--2773, 2003.

%\bibitem{VHMT91}
%J. Vi\~{n}als, E. Hern\'{a}ndez-Garc\'{i}a, M. S. Miguel and R. Toral.
%\newblock Numerical study of the dynamical aspects of pattern selection in the stochastic Swift-Hohenberg equation in one dimension.
%\newblock {\em Phys. Rev. A,} 44(2):1123--1133, 1991.

\bibitem{WDLCD12}
B. Wen, N. Dianati, E. Lunasin, G. P. Chini and C. R. Doering.
\newblock  New upper bounds and reduced dynamical modeling for Rayleigh-B\'{e}nard convection in a fluid saturated porous layer.
\newblock {\em Commun. Nonlinear Sci. Numer. Simulat.,}  17(5):2191--2199, 2012.

%\bibitem{XGV93}
%H. Xi, J. D. Gunton and J. Vi\~{n}als.
%\newblock Spiral-pattern formation in Rayleigh-B\'{e}nard convection.
%\newblock {\em Phys. Rev. E} 47(5):2987--2990, 1993.

\bibitem{XGV91}
H. Xi, J. Vi\~{n}als and J. D. Gunton.
\newblock Numerical solution of the Swift--Hohenberg equation in two dimensions.
\newblock {\em Physica A}, 177:356--365, 1991.

%\bibitem{YS02}
%J. Yan and C.-W. Shu.
%\newblock  Local discontinuous Galerkin methods for partial differential equations with higher order derivatives.
%\newblock {\em J. Sci. Comput.} 17(1):24--47, 2002.

\bibitem{Y16}
X. Yang.
\newblock  Linear, first and second order and unconditionally energy stable numerical schemes for the phase field model of homopolymer blends.
\newblock {\em J. Comput. Phys.,}  302:509--523, 2016.

%\bibitem{YZW17}
% X. Yang, J. Zhao and Q. Wang.
%\newblock  Numerical approximations for the molecular beam epitaxial growth model based on the invariant energy quadratization method.
%\newblock {\em J. Comput. Phys.,}  333:104--127, 2017.

%\bibitem{YZ18}
%X. Yang and J. Zhao.
%\newblock Linear, second order and unconditionally energy stable schemes for the viscous Cahn-Hilliard equation with hyperbolic relaxation.
%\newblock {\em  J. Comput. Appl. Math.,} 343:80--97, 2018.



\bibitem{ZM16}
Z. Zhang and Y. Ma.
\newblock On a large time-stepping method for the Swift--Hohenberg equation.
 \newblock {\em Adv. Appl. Math. Mech.,} 8:992--1003, 2016.

\bibitem{ZWY17}
J. Zhao, Q. Wang and X. Yang.
\newblock  Numerical approximations for a phase field dendritic crystal growth model based on the invariant energy quadratization
approach.
\newblock {\em Int. J. Numer. Methods Eng.,} 110(3):279--300, 2017.

%\bibitem{YHL14}
%P. Yin, Y. Huang and H. liu.
%\newblock An iterative discontinuous Galerkin method for solving the nonlinear Poisson Boltzmann equation.
%\newblock {\em Commun. Comput. Phys.} 16:491--515, 2014.

%\bibitem{YHL17}
%P. Yin, Y. Huang and  H. liu.
%\newblock Error estimate for the iterative discontinuous Galerkin method to nonlinear Poisson-Boltzmann equation equations.
%\newblock {\em Commun. Comput. Phys.} submitted.

\end{thebibliography}
\end{document}